
\documentclass[amsa]{ipart}

\RequirePackage{hyperref}

\usepackage{mathrsfs}
\usepackage{bm}
\usepackage{bbm}

\startlocaldefs
\theoremstyle{plain}
\newtheorem{thm}{Theorem}[section]

\newtheorem{lemma}{Lemma}[section]
\newtheorem{corollary}{Corollary}[section]

\theoremstyle{remark}
\newtheorem{definition}{Definition}[section]
\newtheorem{remark}{Remark}[section]

\newcommand{\Pb}{{\mathsf{P}}} 
\newcommand{\Eb}{{\mathsf{E}}} 
\newcommand{\Var}{{\mathsf{Var}}} 
\newcommand{\Dsf}{{\mathsf{D}}}
\newcommand{\Hyp}{{\mathsf{H}}} 
%
\newcommand{\F}{\mathsf{F}}

\newcommand{\Fc}{{ \mathscr{F}}} 

\newcommand{\Nc}{{ \mathscr{N}}}
\newcommand{\Ni}{\Nc_0\setminus i}

\newcommand{\mrm}[1]{\mathrm{#1}}

\newcommand{\D}{{\mrm{d}}}

\newcommand{\Iin}{{\Theta_{\mrm{in}}}}

\newcommand{\mc}[1]{\mathcal{#1}} 
\newcommand{\cB}{{\mc{B}}}
\newcommand{\cA}{{\mc{A}}}

\newcommand{\cN}{{\mc{N}}}


\newcommand{\mb}[1]{\mathbf{#1}} 
\newcommand{\Xb}{{\mb{X}}}

\newcommand{\ab}{{\mb{a}}}

\newcommand{\Cb}{{\mb{C}}}

\newcommand{\e}{{\mb{e}}}

\newcommand{\mbs}[1]{\bm{#1}} 
\newcommand{\alphab}{{\mbs{\alpha}}}


\newcommand{\mbb}[1]{\mathbb{#1}} 
\def\One{\mathchoice{\rm 1\mskip-4.2mu l}{\rm 1\mskip-4.2mu l}
{\rm 1\mskip-4.6mu l}{\rm 1\mskip-5.2mu l}}
\newcommand\Ind[1]{{\One_{\{#1\}}}} 
\newcommand{\Rbb}{\mbb{R}} 
\newcommand{\Mi}{\mbb{M}} 
\newcommand{\class}{{\mbb{C}}}
\newcommand{\Qbb}{{\mbb{Q}}}

\newcommand{\wtX}{{\widetilde{X}}}

\newcommand{\wtlambda}{{\widetilde{\lambda}}}
\newcommand{\wtS}{{\widetilde{S}}}

\newcommand{\hT}{\widehat{T}}
\newcommand{\hLa}{\widehat{\Lambda}}
\newcommand{\hla}{\widehat{\lambda}}
\newcommand{\oX}{\overline{X}}

\newcommand{\xra}{\xrightarrow} 

\newcommand{\abs}[1]{\left\vert#1\right\vert}
\newcommand{\set}[1]{\left\{#1\right\}}

\newcommand{\brc}[1]{\left(#1\right)}
\newcommand{\brcs}[1]{\left[#1\right]}

\renewcommand{\le}{\leqslant} 
\renewcommand{\ge}{\geqslant}

\newcommand{\ignore}[1]{} 

\endlocaldefs

\pubyear{2024, Special issue: Articles of optimization, wavelets, financial mathematics, and mathematical problems related to statistics, {\bf in memory of Professor Tze Leung Lai}}
\volume{0}
\issue{0}
\firstpage{1}
\lastpage{1}

\begin{document}

\begin{frontmatter}

\title[Nearly Optimum Properties of Multi-Decision Sequential  Rules] {Nearly Optimum Properties of Certain Multi-Decision Sequential Rules for General 
Non-i.i.d.\ Stochastic Models}

\begin{aug}  
\author{\fnms{Alexander G.} \snm{Tartakovsky}\ead[label=e1]{alexg.tartakovsky@gmail.com}}
\address{AGT StatConsult, Los Angeles, CA \\ USA\\ 
\printead{e1}}
\end{aug}
\received{\sday{29} \smonth{4} \syear{2024}}

\begin{abstract}
Dedicated to the memory of Professor Tze Leung Lai, this paper introduces three multi-hypothesis sequential tests. These tests are derived from one-sided versions of the sequential probability ratio test and its modifications. They are proven to be first-order asymptotically optimal for testing simple and parametric composite hypotheses when error probabilities are small. These tests exhibit near optimality properties not only in classical i.i.d. observation models but also in general non-i.i.d. models, provided that the log-likelihood ratios between hypotheses converge $r$-completely to positive and finite numbers. These findings extend the seminal work of Lai (1981) on two hypotheses.
\end{abstract}

\begin{keyword}[class=AMS]
\kwd[Primary ]{62L10}
\kwd{62L15}
\kwd[; secondary ]{62G40}
\end{keyword}

\begin{keyword}
\kwd{MULTI-DECISION SEQUENTIAL RULES}
\kwd{MULTI-HYPOTHESIS SEQUENTIAL PROBABILITY RATIO TEST}
\kwd{ASYMPTOTIC OPTIMALITY}
\kwd{COMPLETE CONVERGENCE} 
\end{keyword}


\end{frontmatter}


\section{Introduction}\label{s:Intro}

Let $X_1,X_2,\ldots\,$ be independent and identically distributed (i.i.d.) random variables available for observation  and $\{\Pb_\theta, \theta\in \Theta\}$ be a family of distributions 
with densities $p_\theta(x)$ with respect to some non-degenerate, sigma-finite measure, so the joint density of the vector $\Xb^n_1=(X_1,\dots,X_n)$ is 
$
p_\theta(\Xb^n_1) = \prod_{t=1}^n p_\theta(X_t).
$
For testing two simple hypotheses $\Hyp_0: \theta=\theta_0$ and $\Hyp_1: \theta=\theta_1$, Wald~\cite{wald45, wald47} proposed a sequential probability ratio test (SPRT) that is based on comparing the likelihood ratio between these hypotheses, $\Lambda_{\theta_1,\theta_0}(n)=\prod_{t=1}^n [p_{\theta_1}(X_t)/p_{\theta_0}(X_t)]$,
with two thresholds. Wald and Wolfowitz~\cite{wald48} proved that the SPRT has a remarkably strong optimality property -- it minimizes the expected sample size under both 
hypotheses in the class of all tests with error probabilities of Type I and Type II upper-bounded by the given numbers.  
However, the SPRT may perform poorly for parameter values different from the putative values $\theta_0$ and $\theta_1$. 

To address this issue, for testing composite hypotheses  $\Hyp_0: \theta\in \Theta_0$ versus $\Hyp_1: \theta\in \Theta_1$, Wald~\cite{wald47} suggested the 
mixture-likelihood-ratio  approach for modifying the SPRT.  Let $\pi_0(\theta)$ and $\pi_1(\theta)$  be two ``mixing'' probability densities (more generally weights) and let 
\[
\Lambda^\pi(n) = \frac{\int_{\Theta_1}  \prod_{k=1}^n p_\theta(X_k) \pi_1(\theta)\, \D\theta}{\int_{\Theta_0}  \prod_{k=1}^n p_\theta(X_k) \pi_0(\theta) \, \D\theta} 
\]
be the mixture likelihood ratio. Then replacing the likelihood ratio $\Lambda_{\theta_1,\theta_0}(n)$ used in the SPRT by this mixture likelihood ratio $\Lambda^\pi(n)$ 
leads to the mixture SPRT.  Applying Wald's likelihood ratio identity, one can easily obtain the upper bounds on the average error probabilities:
\[
 \int_{\Theta_0} \Pb_\theta(\text{accept} ~ \Hyp_1) \pi_0(\theta) \, \D\theta  ~~ \text{and} ~~   \int_{\Theta_1} \Pb_\theta(\text{accept} ~ \Hyp_0) \pi_1(\theta) \, \D\theta.
\]
However, for practical purposes, it is preferable to bound not only the average error probabilities but also the maximum error probabilities, represented by 
$\sup_{\theta\in\Theta_0}  \Pb_\theta(\text{accept} ~ \Hyp_1)$ and $\sup_{\theta\in\Theta_1}  \Pb_\theta(\text{accept} ~ \Hyp_0)$.
Unfortunately, how to obtain the upper bounds on these maximal error probabilities for the mixture SPRT is generally unclear.

An alternative approach is the generalized likelihood ratio (GLR) method from the classical Neyman--Pearson fixed-sample-size theory. This approach replaces the likelihood ratio 
$\Lambda_{\theta_1,\theta_0}(n)$ used in the SPRT by the GLR statistics
\begin{equation}\label{GLRstatiid}
\widehat\Lambda_{i}(n) = \frac{\sup_{\theta \in \Theta} \prod_{t=1}^n p_\theta(X_t)}{\sup_{\theta \in \Theta_i} \prod_{t=1}^n p_\theta(X_t)} = 
 \frac{\prod_{t=1}^n p_{\hat\theta_n}(X_t)}{\sup_{\theta \in \Theta_i} \prod_{t=1}^n p_\theta(X_t)}, ~~ i=0,1,
\end{equation}
 where $\hat\theta_n=\arg\sup_{\theta \in \Theta} \prod_{t=1}^n p_\theta(X_t)$ is the maximum likelihood estimator of $\theta$. 
 Obtaining upper bounds for maximal error probabilities is also a challenge since the GLR statistics are not viable likelihood ratios anymore and as a result, 
 Wald's likelihood ratio identity cannot be used for this purpose.
 The GLR method has been developed in numerous publications, encompassing both Bayesian and frequentist settings, particularly when the cost of observations or 
 the error probabilities are small. See, e.g, Schwarz~\cite{Schwarz-AMS1962}, Wong \cite{WongAMS-1968},  
Lorden~\cite{Lorden1,lorden-as73,Lorden-unpublished-1977}, Lai~\cite{Lai-AS1988}, Lai and Zhang~\cite{LaiZhang-SQA1994}, Chan and Lai~\cite{ChanLai-AS2000}.

The other way is to employ a combination of mixture-based and GLR approaches exploiting the statistics  
\begin{equation}\label{MixGLRstatiid}
\widehat\Lambda_{i}^\pi(n) = \frac{\int_{\Theta} \prod_{t=1}^n p_\theta(X_t) \pi(\theta) \, \D\theta}{\sup_{\theta \in \Theta_i} \prod_{t=1}^n p_\theta(X_t)} , ~~ i=0,1,
\end{equation}
where $\Theta=\Theta_0\cup \Theta_1$. This modification of the SPRT has the advantage that upper bounds on the maximal error probabilities $\sup_{\theta\in\Theta_0}  \Pb_\theta(\text{accept} ~ \Hyp_1)$ 
and $\sup_{\theta\in\Theta_1}  \Pb_\theta(\text{accept} ~ \Hyp_0)$ can be obtained in the same way as for the SPRT (see Lemma~\ref{Lem:PEMMSPRT} 
in Section~\ref{ss:PEMMSPRT} for the more general multi-hypothesis and non-i.i.d.\ case).
 
 Note that the GLR method is adaptive. Yet another adaptive approach is to replace the ``global'' maximum likelihood estimator in the GLR statistic \eqref{GLRstatiid} by  
 one-step delayed estimators (at each step), that is, instead of $\widehat{\Lambda}_{i}(n)$ to use the adaptive likelihood ratio statistics
 \begin{equation*}
\widehat\Lambda_{i}^*(n) = \frac{\prod_{t=1}^n p_{\hat\theta_{t-1}}(X_t)}{\sup_{\theta \in \Theta_i} \prod_{t=1}^n p_\theta(X_t)}, ~~ i=0,1.
\end{equation*}
In this case, the Wald likelihood ratio identity can still be applied to upper-bound the error probabilities of the corresponding adaptive SPRT.  
The adaptive SPRT is therefore a very attractive alternative to the GLR-SPRT and 
the mixture-based SPRT.  The idea of this test goes back to the works by Robbins and Siegmund \cite{RobbinsSiegmund-Berkeley70,RobbinsSiegmund-AS74} 
who were the first who suggested a one-sided adaptive test in the context of power $1$ tests. 
 
 In the present paper, we develop the asymptotic theory of multi-hypothesis sequential testing for general stochastic models not restricted to the i.i.d.\ assumption.
 Specifically, we consider models with dependent and non-identically distributed data of a very general structure with minimal assumptions related 
 to the $r$-complete convergence of normalized log-likelihood 
 ratio processes to certain positive numbers.  The approach modifies and extends the ideas in Lai's seminal work~\cite{Lai-as81-SPRT} for 
 two simple hypotheses, and Tartakovsky's contributions~\cite{TartakovskySISP98} for multiple simple hypotheses.

\section{Problem formulation} \label{s:PF}

We are interested in the following general discrete-time multi-hypothesis model with parametric composite hypotheses.  Let $(\Omega, \Fc ,\Fc_n, \Pb_\theta)$, $n \ge 1$ be a filtered 
probability space with standard assumptions about monotonicity of the $\sigma$-algebras~$\Fc_n$. The parameter $\theta=(\theta_1,\dots,\theta_l)$ belongs to a subset~$\Theta$ 
of the $l$-dimensional Euclidean space~$\Rbb^l$. Random variables $X_1, X_2, \dots$ are observed sequentially taking values in a measurable space $(\Omega, \Fc)$.
The sub-$\sigma$-algebra $\Fc_n =  \sigma (X_1,\dots,X_n)$ of~$\Fc$ is generated by the sequence 
$\{X_t\}_{t\ge 1}$  observed up to time~$n$. The hypotheses to be tested are $\Hyp_i:~ \theta \in \Theta_i$, $i = 0, 1,\dots,N$ ($N \ge 1$), 
where~$\Theta_i$ are disjoint subsets of~$\Theta$. We also suppose that there is an indifference zone $\Iin\subset \Theta$ in which there are no restrictions on the error probabilities. 
It is assumed that the subsets $\Iin$ and $\Theta_i$ ($i = 0, 1,\dots,N$) are disjoint. The indifference zone, where any decision is acceptable, is usually introduced because the 
correct action is not critical and often not even possible when the hypotheses are too close, which is perhaps the case in most practical applications. 
However, if needed $\Iin$ may be an empty set. The probability measures~$\Pb_\theta$ and~$\Pb_{\vartheta}$ are assumed to be locally mutually absolutely continuous, i.e., 
the restrictions $\Pb_\theta^n$ and $\Pb_{\vartheta}^n$ of these measures to the sub-$\sigma$-algebras~$\Fc_n$ are equivalent for all $1 \le n < \infty$ and 
all distinct values $\theta, \vartheta\in \Theta$.

Write $\Xb^n_1=(X_1,\dots,X_n)$ for the sample of size $n$. It is convenient to represent the general probabilistic model in terms of densities as
\begin{equation}\label{jointdenscomp}
p_{\theta, n}(\Xb^n_1) = \prod_{t=1}^n f_{\theta,t}(X_t| \Xb_1^{t-1}), \quad \theta \in \Theta,
\end{equation}
where $p_{\theta,n}(\Xb^n_1)$ is the joint density of the sample $\Xb^n_1$ (i.e., density of $\Pb_\theta^n$ with respect to some sigma-finite measure) and $f_{\theta,t}(X_t| \Xb^{t-1}_1)$ 
is the corresponding conditional density.

For the sake of brevity in what follows we will often write $\Nc_0$ for the set $\{0,1,\dots,N\}$. 

 A multi-hypothesis sequential test~$\Dsf=(T, d)$, where~$T$ is a stopping time with respect to the filtration $\{\Fc_n \}_{n \ge 1}$, 
and $d=d_T(\Xb_1^{T})\in \{0, 1,\dots,N\}$ is an $\Fc_T$-measurable terminal decision rule specifying which hypothesis is to be accepted once 
observations have stopped. The hypothesis~$\Hyp_i$ is accepted if~$d=i$ and rejected if~$d \neq i$, i.e., $\set{d=i}=\set{T<\infty, ~  ~ \text{accepts $H_i$}}$, $i \in \Nc_0$. 

The quality of a sequential test is judged based on its error probabilities and expected sample sizes or more generally on the moments of the sample size. 
Let $\Eb_\theta$ denote expectation under $\Pb_\theta$, $\theta\in \Theta$ and let 
\[
\alpha_{ij}(\Dsf, \theta)= \Eb_\theta\brcs{\Ind{d=j}}= \Pb_\theta(d=j), \quad \theta\in \Theta_i,  \quad i \neq j \quad (i,j=0,1,\dots,N) 
\]
denote the probability that the test~$\Dsf$ accepts the hypothesis~$\Hyp_j$ when the true value of the parameter~$\theta$ is fixed within the subset~$\Theta_i$. Hereafter 
$\Ind{\cA}$ denotes an indicator of a set $\cA$. By 
\begin{equation}\label{Mclassescomp}
\begin{aligned}
\class(\alphab) & = \set{\Dsf: \sup_{\theta \in \Theta_i} \alpha_{ij}(\Dsf, \theta)  \le \alpha_{ij} ~ \text{for} ~ i, j = 0,1,\dots,N, \, i \neq j}
\end{aligned}
\end{equation}
denote the class of tests with the maximal error probabilities that do not exceed the predefined values~$\alpha_{ij}\in (0,1)$, where $\alphab=(\alpha_{ij})_{i,j\in \Nc_0, i \neq j}$ 
is the matrix of given constraints. 

Note that the class $\class(\alphab)$ confines the error probabilities in the regions~$\Theta_i$ but not in the indifference zone~$\Iin$ where the hypotheses are too close to be distinguished with 
the given and relatively low error probabilities.  However, ideally, we would like to minimize the expected sample size  $\Eb_\theta [T]$ for all possible parameter values, including those in 
the indifference zone. Unfortunately, there is no such test, since the structure of the test that minimizes the expected sample size $\Eb_\theta [T]$ at a specific parameter 
value~$\theta=\tilde\theta$  depends on~$\tilde\theta$. 
However, this problem may be solved asymptotically when the error probabilities are small. That is, we are interested in finding multi-hypothesis tests $\Dsf=(T,d)$ that minimize the 
expected sample size
$\Eb_\theta [T]$ uniformly for all $\theta \in \Theta$ in the class of tests~$\class(\alphab)$  as 
$\alpha_{ij}$ approach zero. More generally, we are interested in finding the tests that minimize asymptotically the moments of the stopping time distribution up to some order $r \ge 1$:
\begin{equation}\label{OptuniformProblem}
\inf_{\Dsf\in\class(\alphab)} \Eb_\theta [T^r] \quad \text{as}~ \alpha_{\max} \to 0  \quad \text{uniformly in}~~ \theta\in \Theta,
\end{equation}
where $\alpha_{\max}=\max_{i,j\in \Nc_0, i \neq j} \alpha_{ij}$ and $\Theta = \sum_{i=0}^N \Theta_i  + \Iin$. Specifically, we need to construct a sequential test 
$\Dsf_*=(T_*, d_*)$ that is first-order asymptotically optimal under certain quite general conditions, i.e., 
\begin{equation}\label{FOASoptimcomp}
\frac{\inf_{\Dsf\in\class(\alphab)} \Eb_\theta [T^r]}{\Eb_\theta [T_*^r]} = 1+o(1)  \quad \text{as}~ \alpha_{\max} \to 0 \quad \text{for all}~ \theta\in\Theta.
\end{equation}
Hereafter $o(1) \to 0$.

We will also consider the case of simple hypotheses where the parameter $\theta$ takes $N+1$ values $\theta_0, \theta_1, \dots, \theta_N$ or, more generally, 
the distributions under hypotheses  $\Hyp_i$ have joint distinct densities
\begin{equation}\label{jointdenssimple}
p(\Xb^n_1 | \Hyp_i)= p_{i,n}(\Xb^n_1) = \prod_{t=1}^n f_{i,t}(X_t| \Xb^{t-1}_1), \quad i=0,1,\dots,N,
\end{equation}
where $f_{i,t}(X_t| \Xb^{t-1}_1)$ are the corresponding conditional densities. We need to construct a sequential test $\Dsf_*=(T_*, d_*)$ such that 
\begin{equation}\label{FOASoptimsimple}
\frac{\inf_{\Dsf\in\class_{\rm sim}(\alphab)} \Eb_i [T^r]}{\Eb_i [T_*^r]} = 1+o(1)  \quad \text{as}~ \alpha_{\max} \to 0 \quad \text{for all}~ i=0,1,\dots,N,
\end{equation}
where $\Eb_i$ is expectation under measure $\Pb_i$ and
\begin{equation}\label{Mclassessimple}
\class_{\rm sim}(\alphab)  = \set{\Dsf: \Pb_i(d=j)  \le \alpha_{ij} ~ \text{for} ~ i, j = 0,1,\dots,N, \, i \neq j}.
\end{equation}

\begin{remark}\label{Rem:Asymcase}
In what follows, for practical purposes, we will suppose that the ratios of logarithms of error probabilities $\log \alpha_{ij}/\log \alpha_{ks}$ are bounded away from zero and infinity,
i.e., the error probabilities approach zero in such a way that for all $i,j$
\begin{equation}\label{Asymcase}
\frac{|\log \alpha_{ij}|}{|\log \alpha_{\max}|} \sim c_{ij} ~~ \text{as}~ \alpha_{\max} \to 0, ~~ 0 < c_{ij} \le 1  .
\end{equation}
 This assumption guarantees that any error probability $\alpha_{ij}$ does not go to zero at an exponentially faster (or slower)
rate than any other $\alpha_{ks}$. However, we do not require (as it often happens in the literature) that the $\alpha_{ij}$'s go to zero at the same rate, in which case 
all $c_{ij}$ are equal to 1.  In other words, we consider an asymptotically asymmetric case rather than an asymptotically symmetric one when $c_{ij}=1$. The reason is that
there are many important problems for which the error probabilities may be orders of magnitude different. A typical example is the problem of target detection when the
probabilities of a false alarm are required to be substantially smaller than the probabilities of target missing (miss detection).
\end{remark}

\section{Asymptotic lower bounds for performance metrics} \label{s:ALB}

For $n \ge 1$ and $\theta, \vartheta \in\Theta$ define the likelihood ratio (LR) and the log-likelihood ratio (LLR) for the sample $\Xb^n_1$ 
between the distinct points $\theta$ and $\vartheta$
\begin{equation}\label{LRcomposite}
\begin{aligned}
\Lambda_{\theta, \vartheta}(n) & = \frac{p_{\theta,n}(\Xb^n_1)}{p_{\vartheta,n}(\Xb^n_1)} = \prod_{t=1}^n \frac{f_{\theta,t}(X_t | \Xb^{t-1}_1)}{f_{\vartheta,t}(X_t | \Xb^{t-1}_1)},
\\
\lambda_{\theta, \vartheta}(n) & = \log \Lambda_{\theta, \vartheta}(n) =  \sum_{t=1}^n \log \brcs{\frac{f_{\theta,t}(X_t | \Xb^{t-1}_1)}{f_{\vartheta,t}(X_t | \Xb^{t-1}_1)}}.
\end{aligned}
\end{equation}

Below we show that the lower bounds in class $\class(\alphab)$ for the performance metrics -- the moments of the stopping time $\Eb_{\theta}[T^r]$ -- can be established as long as
the LLR $\lambda_{\theta, \vartheta}(n)$ obeys the strong law of large numbers (SLLN) with a certain rate $\psi(n)$. 

Throughout the paper, we assume that $\psi(t)$ is an increasing one-to-one $\Rbb_+\to \Rbb_+$ function. By $\Psi$ we denote its inverse $\psi^{-1}$ and we assume that both 
$\psi$ and $\Psi$ go to infinity as $t\to\infty$, $\lim_{t\to\infty} \psi(t)=\lim_{t\to\infty} \Psi(t)=\infty$. In addition, we assume the following conditions on the inverse function 
\begin{equation}\label{CondPsi}
\lim_{\delta\to 1} \lim_{t\to\infty} \frac{\Psi(\delta \,  t)}{\Psi(t)} =1
\end{equation}
and
\begin{equation}\label{CondPsi1}
\lim_{t\to\infty} t^{-1} \log \Psi(t)=0.
\end{equation}
Note that conditions \eqref{CondPsi}-\eqref{CondPsi1} are satisfied for the power function $\psi(t)= t^\beta$ with $\beta>0$, but not for the logarithmic function $\psi(t)=\log t$, i.e.,  
the function $\psi(t)$ has to increase not too slowly -- faster than the logarithmic function, $\psi(t)/\log t\to \infty$ as $t\to\infty$.

To obtain asymptotic lower bounds we impose the following condition.
\vspace{2mm}

\noindent $\Cb 1$. {\em Right-tail Condition}. Assume that there are a positive increasing function $\psi(t)$, $\psi(\infty)=\infty$, satisfying condition \eqref{CondPsi}, and a positive continuous 
function $I(\theta,\vartheta)$ with 
\begin{equation}\label{Ipositive} 
\begin{aligned}
\mathop{\min_{j \in \Ni}} \inf_{\vartheta\in\Theta_j}I(\theta,\vartheta) &  >0 \quad \text{for all}~ \theta\in\Theta_i ~ \text{and}~ i\in \Nc_0,
\\
\min_{i \in \Nc_0} \inf_{\vartheta\in\Theta_i}I(\theta,\vartheta) & > 0 \quad \text{for all}~ \theta\in \Theta_{\rm in},
\end{aligned}
\end{equation}
such that for any $\varepsilon>0$ and all $\theta, \vartheta \in \Theta$, $\theta\neq \vartheta$ 
\begin{equation}\label{RTCond}
\lim_{L\to\infty}\Pb_{\theta}\set{\frac{1}{\psi(L)}\max_{1 \le n \le L} \lambda_{\theta, \vartheta}(n)  >(1+\varepsilon) I(\theta, \vartheta)} =0  .
\end{equation}

Remark~\ref{Rem:AS} below shows that $I(\theta,\vartheta)$ can be interpreted as an information ``distance'' between the parameter points $\theta$ and $\vartheta$ and
$\inf_{\vartheta\in\Theta_i} I(\theta,\vartheta) =I(\theta,\Theta_i)$ as a ``distance'' between the parameter value $\theta$ and the subset $\Theta_i$. Thus, conditions 
\eqref{Ipositive} represent separability restrictions between subsets $\Theta_i$, $\Theta_j$ and $\Theta_{\rm in}$ ($i\neq j$). It is intuitively obvious that if the corresponding
distances are zero, then the hypotheses become indistinguishable. 

Additional notation:  $I_i(\theta)= I(\theta,\Theta_i)= \inf_{\vartheta\in\Theta_i} I(\theta,\vartheta)$,
\begin{equation}\label{Fdefinition}
F_{i,\theta}(\varepsilon, \alphab) = \Psi\brc{(1-\varepsilon) \max_{j \in \Ni} \frac{|\log \alpha_{ji}|}{I_j(\theta)} },
\end{equation}
so that 
\begin{equation}\label{Fdefinition2}
F_{i,\theta}(0, \alphab) = \Psi\brc{\max_{j \in \Ni} \frac{|\log \alpha_{ji}|}{I_j(\theta)} }.
\end{equation}
In the following, we will omit $0$ in \eqref{Fdefinition2} and will write $F_{i,\theta}(\alphab)$ for $F_{i,\theta}(0,\alphab)$. 

The following theorem establishes lower bounds for arbitrary moments of the stopping time distribution in the class of sequential or non-sequential multi-hypothesis tests 
$\class(\alphab)$ defined in \eqref{Mclassescomp}.  
In the sequel, we will often write $\theta_i$ for $\theta$ when $\theta$ belongs to the subset $\Theta_i$.

\begin{thm} \label{Th:LBcomposite} 
If there exist an increasing positive function $\psi(t)$, $\psi(\infty)=\infty$,  satisfying condition \eqref{CondPsi}, and a positive function $I(\theta,\vartheta)$, satisfying \eqref{Ipositive}, 
such that right-tail conditions \eqref{RTCond} hold, 
then for every $0 < \varepsilon < 1$
\begin{equation}\label{Prob1composite}
\lim_{\alpha_{\max}\to 0} \inf_{\Dsf\in\class(\alphab)}  \Pb_\theta\set{T >  F_{i,\theta}(\varepsilon, \alphab)} =1 ~~ \text{for all}~ \theta\in\Theta_i ~ \text{and}~ i \in \Nc_0
\end{equation}
and for every  $0 < \varepsilon < 1$
\begin{equation}\label{Prob1composite2}
\lim_{\alpha_{\max}\to 0} \inf_{\Dsf\in\class(\alphab)}  \Pb_\theta\set{T >  \min_{0 \le i \le N} F_{i,\theta}(\varepsilon, \alphab)} =1 ~~ \text{for all}~ \theta\in \Theta_{\rm in}.
\end{equation}
Therefore,  for all $r \ge 1$ as $\alpha_{\max} \to 0$
\begin{align}
\inf_{\Dsf\in \class(\alphab)} \Eb_{\theta}[T^r]  & \ge \brcs{F_{i, \theta}(\alphab)}^r (1+o(1))  ~~  \text{for all}~ \theta\in\Theta_i ~ \text{and}  ~i \in \Nc_0;
\label{LBmomentscomposite1}
\\
\inf_{\Dsf\in \class(\alphab)} \Eb_{\theta}[T^r]  & \ge  \brcs{\min_{i \in \Nc_0} F_{i,\theta}(\alphab)}^r (1+o(1)) \quad \text{for all}~~ \theta\in\Theta_{\rm in} .
\label{LBmomentscomposite2}
\end{align}
\end{thm}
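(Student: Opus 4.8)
The plan is to prove the two probability statements \eqref{Prob1composite} and \eqref{Prob1composite2} first, and then deduce the moment bounds \eqref{LBmomentscomposite1}--\eqref{LBmomentscomposite2} by a Markov-type inequality combined with the regularity condition \eqref{CondPsi}. Throughout I write $L=F_{i,\theta}(\varepsilon,\alphab)$ in the first case and $L=\min_{0\le i\le N}F_{i,\theta}(\varepsilon,\alphab)$ in the second; since $\psi=\Psi^{-1}$, the value $\psi(L)$ equals $(1-\varepsilon)\max_{j\in\Ni}|\log\alpha_{ji}|/I_j(\theta)$, respectively the minimum over $i$ of this quantity. Everything reduces to showing $\sup_{\Dsf\in\class(\alphab)}\Pb_\theta(T\le L)\to 0$.

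The core is a single change-of-measure estimate. Fix a terminal decision value $m$ and an alternative $\vartheta\in\Theta_j$ with $j\neq m$, so that accepting $\Hyp_m$ is an error under $\Pb_\vartheta$ and hence $\Pb_\vartheta(d=m)\le\alpha_{jm}$ for every $\Dsf\in\class(\alphab)$. By the assumed local mutual absolute continuity and Wald's likelihood ratio identity applied to the $\Fc_T$-measurable event $\{d=m,\ T\le L\}\subset\{T<\infty\}$,
\[
\alpha_{jm}\ \ge\ \Pb_\vartheta(d=m)\ \ge\ \Eb_\theta\!\left[e^{-\lambda_{\theta,\vartheta}(T)}\,\Ind{d=m,\,T\le L}\right].
\]
On the observation-only event $G_\vartheta=\set{\max_{1\le n\le L}\lambda_{\theta,\vartheta}(n)\le(1+\varepsilon')I(\theta,\vartheta)\,\psi(L)}$, every path with $T\le L$ satisfies $\lambda_{\theta,\vartheta}(T)\le(1+\varepsilon')I(\theta,\vartheta)\,\psi(L)$, so the integrand is bounded below by $e^{-(1+\varepsilon')I(\theta,\vartheta)\psi(L)}$ and therefore
\[
\Pb_\theta(d=m,\ T\le L,\ G_\vartheta)\ \le\ \alpha_{jm}\,e^{(1+\varepsilon')I(\theta,\vartheta)\psi(L)}.
\]
Crucially $G_\vartheta$ does not depend on the test, so the right-tail condition $\Cb1$ gives $\Pb_\theta(G_\vartheta)\to1$ as $L\to\infty$, uniformly over $\Dsf\in\class(\alphab)$.

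It remains to choose the alternative so the exponent is dominated by $|\log\alpha_{jm}|$. For $\theta\in\Theta_i$ and the correct decision $m=i$, let $j^\star=\argmax_{j\in\Ni}|\log\alpha_{ji}|/I_j(\theta)$ and pick $\vartheta\in\Theta_{j^\star}$ with $I(\theta,\vartheta)\le(1+\eta)I_{j^\star}(\theta)$; then $(1+\varepsilon')I(\theta,\vartheta)\psi(L)\le(1+\varepsilon')(1+\eta)(1-\varepsilon)|\log\alpha_{j^\star i}|$, so the previous display is at most $\alpha_{j^\star i}^{\,1-(1+\varepsilon')(1+\eta)(1-\varepsilon)}\to0$ once $\varepsilon',\eta$ are fixed small enough that the exponent is positive. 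For an incorrect decision $m\neq i$ the direct constraint $\Pb_\theta(d=m)\le\alpha_{im}\to0$ suffices, and summing over $m$ yields \eqref{Prob1composite}. For $\theta\in\Theta_{\rm in}$ there is no correct decision, so I treat every $m$ by the change-of-measure estimate: taking $i=m$ inside the minimum defining $L$ gives $\psi(L)\le(1-\varepsilon)\max_{k\neq m}|\log\alpha_{km}|/I_k(\theta)$, and choosing $j^\star=\argmax_{k\neq m}(\cdots)$ yields $I_{j^\star}(\theta)\psi(L)\le(1-\varepsilon)|\log\alpha_{j^\star m}|$, after which the identical computation bounds each term; summation over $m$ gives \eqref{Prob1composite2}.

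Finally, the moment bounds follow from $\Eb_\theta[T^r]\ge L^r\,\Pb_\theta(T>L)$. Dividing by $[F_{i,\theta}(\alphab)]^r$ and writing $\brc{F_{i,\theta}(\varepsilon,\alphab)/F_{i,\theta}(\alphab)}^r=\brc{\Psi((1-\varepsilon)t)/\Psi(t)}^r$ with $t=\max_{j\in\Ni}|\log\alpha_{ji}|/I_j(\theta)\to\infty$ (the latter because $|\log\alpha_{ji}|\to\infty$ under the asymmetry assumption \eqref{Asymcase}), condition \eqref{CondPsi} forces this ratio to $1$ as $\varepsilon\to0$, which yields \eqref{LBmomentscomposite1} and, with $\min_i$ in place of the single index, \eqref{LBmomentscomposite2}. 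I expect the main obstacle to be the uniformity in $\Dsf\in\class(\alphab)$: the whole argument hinges on $G_\vartheta$ being measurable with respect to the observations alone, so that $\Cb1$ controls its probability independently of the stopping rule, and on keeping the three slack parameters $\varepsilon,\varepsilon',\eta$ coordinated so that the exponent $1-(1+\varepsilon')(1+\eta)(1-\varepsilon)$ stays strictly positive while the prefactor ratio still converges to $1$ under \eqref{CondPsi}.
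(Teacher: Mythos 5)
Your proposal is correct and follows essentially the same route as the paper's proof: a change of measure $\Pb_\vartheta\to\Pb_\theta$ via Wald's likelihood ratio identity on the event $\set{d=m,\,T\le L}$, truncation by the test-independent event $\set{\max_{1\le n\le L}\lambda_{\theta,\vartheta}(n)\le(1+\varepsilon')I(\theta,\vartheta)\psi(L)}$ controlled by the right-tail condition \eqref{RTCond}, the same choice of $s$ and threshold making the exponent strictly dominated by $|\log\alpha_{j^\star m}|$, and finally Chebyshev's inequality plus \eqref{CondPsi} for the moment bounds. Your explicit slack parameter $\eta$ for picking a near-infimizing $\vartheta\in\Theta_{j^\star}$ makes precise a step the paper passes over silently (it runs the argument with $I(\theta,\vartheta)$ for fixed $\vartheta$ and then asserts the version with $I_j(\theta)$), but this is a refinement of detail, not a different method.
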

 
 \begin{proof}
 Let $\Dsf=(T,d)$ be an arbitrary test from class~$\class(\alphab)$. It suffices to consider tests that terminate almost surely,
$\Pb_{\theta}(T<\infty)=1$, since otherwise $\Eb_\theta[T^r] = \infty$ and the statement follows trivially.  Changing the measure $\Pb_\vartheta \to \Pb_\theta$ and using Wald's likelihood ratio identity, 
we obtain that for any $s\ge 1$, $C>0$, and any two distinct points $\theta$ and~$\vartheta$
\begin{equation}\label{EqIeqfundamcomp}
\begin{aligned}
&\Pb_{\vartheta}(d=i) =\Eb_{\theta}\bigl\{\Ind{d=i}\Lambda_{\theta,\vartheta}(T)^{-1}\bigr\}
\\
& \ge \Eb_{\theta}\bigl\{\Ind{d=i,T \le s, \Lambda_{\theta,\vartheta}(T)<e^C}\Lambda_{\theta,\vartheta}(T)^{-1}\bigr\}
\\
& \ge e^{-C} \Pb_{\theta}\Bigl(d=i,T \le s, \max_{1 \le n \le s}\Lambda_{\theta,\vartheta}(n)< e^C\Bigr)
\\
& \ge e^{-C} \Bigl\{\Pb_{\theta}(d=i,T \le s)-\Pb_{\theta} \Bigl(\max_{1\le n\le s}\lambda_{\theta,\vartheta}(n)\ge C \Bigr)\Bigr\}.
\end{aligned}
\end{equation}
The last inequality follows from the Boole inequality $\Pb(\cA\cap \cB) \ge \Pb(\cA)-\Pb(\cB^c)$,
where $\cB^c$ is a complement of the event~$\cB$, if we set $\cA=\set{d=i,T\le s}$ and $\cB=\{\max_{1\le n\le s}\lambda_{\theta,\vartheta}(n)<C\}$.
It follows that
\begin{equation} \label{inprtau1}
\Pb_{\theta}(d=i,T\le s) \le \Pb_{\vartheta}(d=i) \, e^C +\Pb_{\theta}\set{\max_{1 \le n \le s}\lambda_{\theta,\vartheta}(n)\ge C},
\end{equation}
and since, by Boole's inequality, 
\[
\Pb_\theta(d=i, T \le s) \ge \Pb_\theta( T\le s) - \Pb_\theta(d \neq i),
\]
we obtain
\begin{equation} \label{inprtau2}
\Pb_{\theta}(T\le s) \le \Pb_\theta(d \neq i) + \Pb_{\vartheta}(d=i) \, e^C +\Pb_{\theta}\set{\max_{1 \le n \le s}\lambda_{\theta,\vartheta}(n)\ge C}.
\end{equation}

Let $\theta = \theta_i \in\Theta_i$ and $\vartheta\notin\Theta_i$ and set 
\begin{align*}
s & =s_{ij}(\varepsilon,\theta_i, \alpha_{ji})=\Psi\brc{ (1-\varepsilon)\frac{|\log \alpha_{ji}|}{I(\theta_i,\vartheta)}}, 
\\
C & = C_{ij}(\varepsilon, \alpha_{ji})= (1+\varepsilon) I(\theta_i,\vartheta) \psi(s_{ij}) =(1-\varepsilon^2) |\log \alpha_{ji}| .
\end{align*}
Using~\eqref{inprtau2} we obtain that for all $j \in \Nc_0 \setminus  i$, $\theta_i\in \Theta_i$ and $i =0,1,\dots,N$
\begin{align*}
 &\Pb_{\theta_i}\set{T \le s_{ij}(\varepsilon,\theta_i, \alpha_{ji})} \le \sum_{k \in \Nc_0 \setminus i} \alpha_{ik} +\alpha_{ji}^{\varepsilon^2}
 \\
 & \quad + \Pb_{\theta_i}\set{\max_{1 \le n\le s_{ij}}\lambda_{\theta_i,\vartheta}(n)\ge(1+\varepsilon)I(\theta_i,\vartheta) \psi(s_{ij})}.
\end{align*}
Since the right-hand side does not depend on~$\Dsf$, we have
\begin{align*}
& \inf_{ \Dsf\in\class(\alphab)}\Pb_{\theta_i}\set{T >  s_{ij}(\varepsilon,\theta_i, \alpha_{ji})} \ge 1-\sum_{k \in \Nc_0\setminus i} \alpha_{ik} - \alpha_{ji}^{\varepsilon^2}
 \\
  &\quad - \Pb_{\theta_i}\set{\frac{1}{\psi(s_{ij})}\max_{1 \le n\le s_{ij}}\lambda_{\theta_i,\vartheta}(n)\ge(1+\varepsilon)I(\theta_i,\vartheta)}.
 \end{align*}
The second and third terms on the right-hand side in the above inequality go to~$0$ as $\alpha_{\max}  \to 0$. The fourth term also goes to $0$ 
for all $0<\varepsilon< 1$ by condition \eqref{RTCond}. Hence, for all $\theta_i\in\Theta_i$, $\vartheta\notin\Theta_i$ and all $j \in \Ni$ 
$$
\inf_{ \Dsf\in\class(\alphab)}\Pb_{\theta_i}\set{T > \Psi\brc{ (1-\varepsilon)\frac{|\log \alpha_{ji}|}{I(\theta_i,\vartheta)}}} \xra[\alpha_{\max} \to 0]{} 1,
$$
which implies \eqref{Prob1composite}.

Next, we prove~\eqref{Prob1composite2} for the indifference zone, $\theta\in\Iin$.  For any $\vartheta=\theta_j\in \Theta_j$, let
\[
K_\theta(\alphab) = \min_{i\in \Nc_0} \max_{j \in \Nc_0\setminus i} \frac{|\log \alpha_{ji}|}{I(\theta, \theta_j)}.
\] 

By~\eqref{EqIeqfundamcomp}, for every $\theta\in\Iin$ and $\theta_j\in\Theta_j$,
\begin{equation*}
\Pb_{\theta}(d=i, T\le s) \le \Pb_{\theta_j}(d=i) e^C  +\Pb_{\theta}\Bigl\{\max_{1 \le n \le s}\lambda_{\theta,\theta_j}(n) \ge C\Bigr\}.
\end{equation*}
Setting $s=s_\theta(\varepsilon,\alphab)=\Psi((1-\varepsilon)K_\theta(\alphab))$ and 
\[
C= (1+\varepsilon) I(\theta,\theta_j) \psi(s_\theta(\varepsilon,\alphab))=(1-\varepsilon^2) I(\theta,\theta_j) K_\theta(\alphab),
\]
we obtain that for all $j \in \Nc_0\setminus i$ and $i=0,1,\dots,N$
\begin{align*}
 & \Pb_{\theta}\set{d=i, T \le s_\theta(\varepsilon,\alphab)} \le \alpha_{ji}^{\varepsilon^2}
 \\
 & \quad 
 + \Pb_{\theta}\set{\max_{1 \le n\le s_\theta(\varepsilon,\alphab)} \lambda_{\theta,\theta_j}(n)\ge (1+\varepsilon) I(\theta,\theta_j) \psi(s_\theta(\varepsilon,\alphab))},
\end{align*}
so that for all $i=0,1,\dots,N$
\begin{equation*}
 \Pb_{\theta}\set{d=i, T \le s_\theta(\varepsilon,\alphab)} \le \beta_{i}^{\varepsilon^2} + \gamma_i(\theta,\alphab,\varepsilon) ,
\end{equation*}
where $\beta_i=\max_{j \in \Nc_0\setminus i} \alpha_{ji}$ and 
\[
\gamma_i(\theta, \alphab, \varepsilon) = \max_{j \in \Nc_0\setminus i} \Pb_{\theta}\set{\frac{1} {\psi(s_\theta(\varepsilon,\alphab))} \max_{1 \le n\le s_\theta(\varepsilon,\alphab)}
\lambda_{\theta,\theta_j}(n) \ge (1+\varepsilon) I(\theta,\theta_j)}  .
\]
Consequently, 
\begin{equation*}
 \Pb_{\theta}\set{T \le s_\theta(\varepsilon,\alphab)} \le \sum_{i=0}^N \brcs{\beta_{i}^{\varepsilon^2} + \gamma_i(\theta,\alphab,\varepsilon)} ,
\end{equation*}
and since the right-hand side does not depend on any test $\Dsf$, we obtain the inequality
\[
 \sup_{\Dsf\in\class(\alphab)}  \Pb_{\theta}\set{T \le \Psi\brc{(1-\varepsilon) \min_{i\in \Nc_0} \max_{j \in \Nc_0\setminus i} \frac{|\log \alpha_{ji}|}{I(\theta, \theta_j)}}} \le 
 \sum_{i=0}^N \brcs{\beta_{i}^{\varepsilon^2} + \gamma_i(\theta,\alphab,\varepsilon))} ,
\]
where by condition \eqref{RTCond}  $\gamma_i(\theta,\alphab,\varepsilon) \to 0$ as $\alpha_{\max}\to 0$ for all $\theta\in \Theta_{\rm in}$ and $\varepsilon \in (0,1)$. It follows that for every $0<\varepsilon <1$ and $\theta\in\Iin$,
\[
\sup_{\Dsf\in\class(\alphab)} \Pb_{\theta}\set{T \le \Psi\brc{(1-\varepsilon) \min_{i\in \Nc_0} \max_{j \in \Nc_0\setminus i} \frac{|\log \alpha_{ji}|}{I_j(\theta)}}}\to 0 
\quad \text{as}~~ \alpha_{\max} \to 0 ,
\]
which completes the proof of~\eqref{Prob1composite2}.

The asymptotic lower bounds~\eqref{LBmomentscomposite1} and \eqref{LBmomentscomposite2} now follow from Chebyshev's inequality. Indeed, by the Chebyshev inequality,
for any $r\ge 1$ and all $\theta\in\Theta_i$ and $i =0,1,\dots,N$,
\[
\inf_{\Dsf\in\class(\alphab)}\Eb_\theta \brcs{\brc{\frac{T}{F_{i,\theta}(\alphab)}}^r} \ge 
\brc{\frac{F_{i,\theta}(\varepsilon, \alphab)}{F_{i,\theta}(\alphab)}}^r \inf_{\Dsf\in\class(\alphab)} \Pb_\theta\set{T>F_{i,\theta}(\varepsilon, \alphab)} .
\]
Since $\varepsilon$ can be arbitrarily small and noting that, by condition \eqref{CondPsi},
\begin{equation}\label{limsupFoverF}
\lim_{\varepsilon\to0} \lim_{\alpha_{\max}\to0}\frac{F_{i,\theta}(\varepsilon, \alphab)}{F_{i,\theta}(\alphab)} =1
\end{equation}
and that by \eqref{Prob1composite}
\[
\inf_{\Dsf\in\class(\alphab)} \Pb_\theta\set{T>F_{i,\theta}(\varepsilon, \alphab)} \xra[\alpha_{\max} \to 0]{} 1,
\]
taking the limit $\varepsilon \to0$, we obtain that  for all $r\ge 1$,  $\theta\in\Theta_i$ and $i =0,1,\dots,N$,
\[
\liminf_{\alpha_{\max}\to 0}  \inf_{\Dsf\in\class(\alphab)}\Eb_\theta \brcs{\brc{\frac{T}{F_{i,\theta}(\alphab)}}^r} \ge 1,
\]
which completes the proof of assertion \eqref{LBmomentscomposite1}. 

Analogously, define
\[
\tau_\theta(\varepsilon,\alphab) = \frac{T}{\min_{i \in \Nc_0}F_{i,\theta}(\varepsilon, \alphab) }.
\]
By the Chebyshev inequality, for  any $r\ge 1$ and all $\theta\in\Theta_{\rm in}$
\[
\inf_{\Dsf\in\class(\alphab)}\Eb_\theta [\tau_\theta(0,\alphab)^r ] \ge \brc{\frac{\min_{i \in \Nc_0} F_{i,\theta}(\varepsilon, \alphab)}{\min_{i \in \Nc_0}F_{i,\theta}(\alphab)}}^r 
\inf_{\Dsf\in\class(\alphab)} \Pb_\theta\set{\tau_\theta(\varepsilon,\alphab) > 1},
\]
where by \eqref{Prob1composite2}
\[
\inf_{\Dsf\in\class(\alphab)} \Pb_\theta\set{\tau_\theta(\varepsilon,\alphab) > 1} \xra[\alpha_{\max} \to 0]{} 1 \quad \text{for every}~ 0<\varepsilon <1.
\]
So taking the limit $\varepsilon \to 0$ and accounting for \eqref{limsupFoverF}, we obtain  that
 for all $r\ge 1$ and  $\theta\in\Theta_{\rm in}$ 
\[
\liminf_{\alpha_{\max}\to 0}  \inf_{\Dsf\in\class(\alphab)}\Eb_\theta [\tau_\theta(\varepsilon,\alphab)^r ]  \ge 1,
\]
which completes the proof of the lower bound \eqref{LBmomentscomposite2}.
 \end{proof}
 
 \begin{remark} \label{Rem:AS}
 By  Lemma~1 in~\cite{Tartakov_IMDS_2023}, the right-tail condition  \eqref{RTCond} is satisfied whenever 
 \begin{equation}\label{LLRasComposite}
\frac{\lambda_{\theta, \vartheta}(n)}{\psi(n)} \xra[n\to\infty]{\Pb_{\theta}-\text{a.s.}} I(\theta,\vartheta) \quad \text{for all}~~ \theta, \vartheta \in \Theta, \theta \neq \vartheta.
\end{equation}
 Therefore, assertions of Theorem~\ref{Th:LBcomposite} hold under the strong law for the LLR \eqref{LLRasComposite}, which is a natural and more convenient 
 condition.  
Furthermore, as the proof of  Lemma~1 in~\cite{Tartakov_IMDS_2023} shows, for the right-tail condition \eqref{RTCond}  to hold it suffices to assume that
 \[
 \Pb_\theta\brc{\limsup_{n\to \infty}\frac{\lambda_{\theta, \vartheta}(n)}{\psi(n)} > I(\theta,\vartheta)} =0.
 \]
 \end{remark}
 
Consider now the case of simple hypotheses when the parameter $\theta$ takes $N+1$ distinct values $\theta_0, \theta_1, \dots, \theta_N$ or, more generally, 
the distributions $\Pb_i$ under hypotheses $\Hyp_i$, $i=0,1,\dots,N$ have joint distinct densities defined in \eqref{jointdenssimple}. Obviously, in this case, the indifference zone is 
empty $\Theta_{\rm in}=\varnothing$ and the subsets $\Theta_i= \{i\}$, $i=0,1,\dots,N$. Also, the LR and LLR defined in \eqref{LRcomposite} become the LR and LLR processes between
the hypotheses $\Hyp_i$ and~$\Hyp_j$ for the sample $\Xb^n_1$
\begin{equation}\label{LRsimple}
\begin{aligned}
\Lambda_{ij}(n) &= \frac{p_{i,n}(\Xb^n_1)}{p_{j,n}(\Xb^n_1)} = \prod_{t=1}^n \frac{f_{i,t}(X_t | \Xb^{t-1}_1)}{f_{j,t}(X_t | \Xb^{t-1}_1)},
\\
\lambda_{ij}(n) &= \log \Lambda_{ij}(n) = \sum_{t=1}^n \log \brcs{\frac{f_{i,t}(X_t | \Xb^{t-1}_1)}{f_{j,t}(X_t | \Xb^{t-1}_1)}}.
\end{aligned}
\end{equation}
Obviously, in this case,  Theorem~\ref{Th:LBcomposite}  implies the following corollary, whose proof can also be established directly using proofs of Lemma~2.1 and Theorem~2.2 in Tartakovsky~\cite{TartakovskySISP98}. Recall that class $\class_{\rm sim}(\alphab)$ is defined in \eqref{Mclassessimple}.

\begin{corollary} \label{Cor:LBsimple}
If there exist an increasing function $\psi(t)$, $\psi(\infty)=\infty$, satisfying condition \eqref{CondPsi},  and positive and finite numbers $I_{ij}$, $i,j=0,1,\dots,N$, $i\neq j$ 
such that for all 
$\varepsilon>0$ and all  $i,j=0,1,\dots,N$ ($i\neq j$)
\begin{equation}\label{Probmaxsimple}
\lim_{L\to \infty} \Pb_i \set{\frac{1}{\psi(L)}\max_{1 \le  n \le L} \lambda_{ij}(n) \ge (1+\varepsilon) I_{ij}} =0 ,
\end{equation}
then for all $i=0,1,\dots,N$ and every $0 < \varepsilon < 1$
\begin{equation}\label{Prob1simple}
\lim_{\alpha_{\max}\to 0} \inf_{\Dsf\in\class(\alphab)}  \Pb_i\set{T >  \Psi\brc{(1-\varepsilon)\max_{j\in \Ni} \frac{|\log \alpha_{ji}|}{I_{ij}}}} =1,
\end{equation}
and therefore,  for all $r \ge 1$ and $i=0,1,\dots,N$
\begin{equation}\label{LBmomentssimple}
\inf_{\Dsf\in \class_{\rm sim}(\alphab)} \Eb_i[T^r] \ge \brcs{\Psi\brc{ \max_{j\in \Ni} \frac{|\log \alpha_{ji}|}{I_{ij}} }}^r (1+o(1)) \quad  \text{as}~~ \alpha_{\max} \to 0.
\end{equation}
\end{corollary}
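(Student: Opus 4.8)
The plan is to obtain Corollary~\ref{Cor:LBsimple} as a direct specialization of Theorem~\ref{Th:LBcomposite}, since the simple-hypothesis setting is precisely the composite setting in which every subset degenerates to a single point. Concretely, I would set $\Theta_i=\set{\theta_i}$ for $i\in\Nc_0$, take the indifference zone to be empty, $\Theta_{\rm in}=\varnothing$, and identify $I(\theta_i,\theta_j)=I_{ij}$ and $\lambda_{\theta_i,\theta_j}(n)=\lambda_{ij}(n)$ via \eqref{LRsimple}. Under this dictionary the quantity $I_j(\theta)=I(\theta,\Theta_j)=\inf_{\vartheta\in\Theta_j}I(\theta,\vartheta)$ collapses, when $\theta=\theta_i$, to the single value $I_{ij}$, so that the function $F_{i,\theta}$ of \eqref{Fdefinition} becomes $F_{i,\theta_i}(\varepsilon,\alphab)=\Psi\brc{(1-\varepsilon)\max_{j\in\Ni}|\log\alpha_{ji}|/I_{ij}}$, which is exactly the threshold appearing in \eqref{Prob1simple} and \eqref{LBmomentssimple}.

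Next I would verify that the hypotheses of Theorem~\ref{Th:LBcomposite} are met. The function $\psi$ is assumed increasing with $\psi(\infty)=\infty$ and to satisfy \eqref{CondPsi}, exactly as required. The positivity/separability conditions \eqref{Ipositive} reduce, in the degenerate case, to $\min_{j\in\Ni}I_{ij}>0$ for each $i$; this holds because the finitely many numbers $I_{ij}$ are assumed positive, and continuity of $I(\cdot,\cdot)$ is vacuous on the finite set $\set{\theta_0,\dots,\theta_N}$. The second line of \eqref{Ipositive}, pertaining to $\Theta_{\rm in}$, is vacuously true since $\Theta_{\rm in}=\varnothing$. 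Finally, the right-tail condition \eqref{RTCond}, written for $\theta=\theta_i$ and $\vartheta=\theta_j$, is word-for-word the assumption \eqref{Probmaxsimple} (the $\ge$ in \eqref{Probmaxsimple} only making the hypothesis slightly stronger than the strict inequality in \eqref{RTCond}, hence certainly implying it).

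With the hypotheses in hand, I would simply invoke the conclusions of Theorem~\ref{Th:LBcomposite} for the $N+1$ points $\theta_i\in\Theta_i$. Assertion \eqref{Prob1composite} yields, for every $0<\varepsilon<1$ and each $i$, that $\inf_{\Dsf\in\class_{\rm sim}(\alphab)}\Pb_i\set{T>F_{i,\theta_i}(\varepsilon,\alphab)}\to 1$ as $\alpha_{\max}\to 0$; substituting the explicit form of $F_{i,\theta_i}$ computed above gives \eqref{Prob1simple}. Likewise, the moment bound \eqref{LBmomentscomposite1} gives $\inf_{\Dsf\in\class_{\rm sim}(\alphab)}\Eb_i[T^r]\ge[F_{i,\theta_i}(\alphab)]^r(1+o(1))$, which is \eqref{LBmomentssimple}. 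The statements for the indifference zone, \eqref{Prob1composite2} and \eqref{LBmomentscomposite2}, are not needed because $\Theta_{\rm in}=\varnothing$.

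There is no genuine analytic obstacle here; the only thing requiring care is the notational bookkeeping, in particular confirming that $I_j(\theta_i)=I_{ij}$ and that the index set $j\in\Ni$ over which the maximum in $F_{i,\theta_i}$ is taken matches the one in \eqref{Probmaxsimple}. If a self-contained argument were preferred over citing Theorem~\ref{Th:LBcomposite}, one could replay its proof verbatim: the change of measure $\Pb_{\theta_j}\to\Pb_{\theta_i}$ together with Wald's likelihood ratio identity in \eqref{EqIeqfundamcomp}, followed by the same choices of $s$ and $C$ and an application of Chebyshev's inequality, goes through unchanged and is in fact simpler, since each $\inf_{\vartheta\in\Theta_j}$ is now a minimum over a single point. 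This mirrors the direct route through Lemma~2.1 and Theorem~2.2 of \cite{TartakovskySISP98} alluded to in the statement.
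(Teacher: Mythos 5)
Your proposal is correct and is essentially the paper's own argument: the paper derives Corollary~\ref{Cor:LBsimple} precisely by specializing Theorem~\ref{Th:LBcomposite} to singleton subsets $\Theta_i=\{\theta_i\}$ with $\Theta_{\rm in}=\varnothing$, noting (as you do) the alternative direct route via Lemma~2.1 and Theorem~2.2 of \cite{TartakovskySISP98}. Your bookkeeping -- that $I_j(\theta_i)=I_{ij}$, that $F_{i,\theta_i}(\varepsilon,\alphab)$ reduces to the threshold in \eqref{Prob1simple}, and that the $\ge$ in \eqref{Probmaxsimple} implies the strict-inequality form of \eqref{RTCond} -- is exactly what is needed, and in this degenerate case $\class(\alphab)$ and $\class_{\rm sim}(\alphab)$ coincide, so no gap arises.
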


 \begin{remark} \label{Rem:ASsimple}
 The right-tail condition  \eqref{Probmaxsimple} is satisfied whenever 
  \[
 \Pb_i\brc{\limsup_{n\to \infty}\frac{\lambda_{ij}(n)}{\psi(n)} \le I_{ij}} =1,
 \]
 and therefore, if normalized LLRs $\lambda_{ij}(n)/\psi(n)$ converge $\Pb_i$-a.s.\ to $I_{ij}$ as $n\to\infty$.
 \end{remark}

\section{Asymptotic optimality of likelihood ratio based sequential multi-decision rules for simple hypotheses} \label{s:Rulessimple}

We begin with the case of $N+1$ simple hypotheses $\Hyp_i:\Pb=\Pb_i$, $i=0,1,\dots,N$ with joint densities \eqref{jointdenssimple}. 

\subsection{The matrix sequential probability ratio test}\label{ss:MSPRT}

Recall that $\Nc_0=\{0,1,\dots,N\}$. For a threshold matrix $(A_{ij})_{i,j\in\Nc_0}$, with $A_{ij} > 0$ and the $A_{ii}$ are immaterial ($0$, say), define 
the Matrix SPRT (\mbox{MSPRT}) $\Dsf_* = (T_*, d_*)$, built on $(N+1)N$ one-sided SPRTs between the hypotheses 
$\Hyp_i$ and~$\Hyp_j$, as follows: Stop at the first $n \ge 1$ such that, for some $i\in \Nc_0$, $\Lambda_{ij}(n) \ge A_{ji}$ for all $j \in \Ni$
and accept the unique~$\Hyp_i$ that satisfies these inequalities.  
Obviously, with $a_{ji}= \log A_{ji}$, introducing the Markov accepting times for the hypotheses~$\Hyp_i$, $i\in \Nc_0$ as
\begin{equation} \label{Ti}
\begin{aligned}
T_i & =
\inf\set{ n \ge 1: \lambda_{ij}(n)\ge a_{ji} ~ \text{for all}~ j \in \Nc_0 \setminus i } ,
\\
& =\inf\set{ n \ge 1: \min_{j \in \Nc_0\setminus i}\brcs{\lambda_{ij}(n) -  a_{ji}} \ge 0},   \quad i =0,1,\dots,N,
\end{aligned}
\end{equation}
the \mbox{MSPRT} $\Dsf_*=(T_*, d_*)$ can be written as
\begin{equation} \label{D1}
T_*=\min_{k \in \Nc_0} T_k, \qquad d_*=i \quad \mbox{if} \quad T_*= T_i.
\end{equation}
Thus, in the \mbox{MSPRT}, each component SPRT is extended until, for some $i\in \Nc_0$, all $N$ SPRTs involving $\Hyp_i$ accept~$\Hyp_i$. Note that for $N=1$ the \mbox{MSPRT} coincides with Wald's SPRT. 

Using Wald's likelihood ratio identity, it can be easily shown that the error probabilities of the \mbox{MSPRT} $\alpha_{ij}(\Dsf_*)=\Pb_i(d_*=j)$ satisfy the inequalities
\begin{equation}\label{PEupperbounds}
\alpha_{ij}(\Dsf_*) \le \exp(-a_{ij}) \quad \text{for all}~~ i,j = 0,1,\dots,N, ~ i \neq  j, 
\end{equation}
so  selecting $a_{ij}  = |\log \alpha_{ij}|$ implies $\Dsf_* \in \class(\alphab)$. See, e.g., Lemma 4.1.1 (page 192) in Tartakovsky {\em et al.} \cite{TNB_book2014}.

In his ingenious paper, Lorden~\cite{Lorden-AS77} showed that 
with a special design that includes accurate estimation of thresholds accounting for overshoots, 
the \mbox{MSPRT} is nearly optimal in the third-order sense -- it minimizes expected sample sizes for all hypotheses up to an additive disappearing term, i.e., 
$\inf_{\Dsf\in\class(\alphab)}\Eb_i[T] = \Eb_i[T_*]+ o(1)$ as $\alpha_{\max}\to0$. This result holds only for i.i.d.\ models with the finite second moment  
$\Eb_i[\lambda_{ij}(1)^2] < \infty$. In the non-i.i.d.\ case, it is practically impossible to obtain such a result, 
so we will focus on the first-order optimality \eqref{FOASoptimsimple}.

To establish the asymptotic optimality property of the \mbox{MSPRT} we use the ideas of the groundbreaking paper of Lai~\cite{Lai-as81-SPRT} where he 
proved first-order asymptotic optimality of Wald's SPRT in the general non-i.i.d.\ case.

By the SLLN in the i.i.d.\ case, the LLR ~$\lambda_{ij}(n)$ has the following stability property
\begin{equation}\label{SLLN_LLR}
n^{-1} \lambda_{ij}(n) \xra[n\to\infty]{\Pb_i-\text{a.s.}} I_{ij}, \quad  i,j = 0,1,\dots,N, ~ i \neq  j,
\end{equation}
where 
\[
I_{ij}= \Eb_i[\lambda_{ij}(1)] = \int \log \brcs{\frac{f_i(x)}{f_j(x)}} f_i(x) \D \mu(x)
\] 
is the Kullback-Leibler (K-L) information number, which characterizes the distance between the hypotheses $\Hyp_i$ and $\Hyp_j$. 
This allows one to conjecture that if in the general non-i.i.d.\ case the LLR is also stable in the sense that the almost 
sure convergence conditions~\eqref{SLLN_LLR} are satisfied with some positive and finite numbers $I_{ij}$, then the \mbox{MSPRT} is approximately optimal. 
 In the general case, these numbers represent the local K-L information in the sense that often (while not always) 
$I_{ij}= \lim_{n\to\infty} n^{-1} \Eb_i[\lambda_{ij}(n)]$.  

Having said that, in what follows, we will assume that the normalized LLRs $\lambda_{ij}(n)/\psi(n)$ converge almost surely to finite and positive numbers $I_{ij}$ under $\Pb_i$:
\begin{equation}\label{LLRasSimple}
\frac{\lambda_{ij}(n)}{\psi(n)} \xra[n\to\infty]{\Pb_i-\text{a.s.}} I_{ij}, \quad  i,j = 0,1,\dots,N, ~ i \neq  j ,
\end{equation}
where $\psi(t)$ is an increasing function ($\psi(\infty)=\infty$) as in Section~\ref{s:ALB}. 

A standard approach for proving asymptotic optimality is to show that the lower bounds \eqref{LBmomentssimple} in Corollary~\ref{Cor:LBsimple} are 
attained for the \mbox{MSPRT} under certain conditions.

\subsection{Asymptotic optimality of the \mbox{MSPRT}} \label{ss:AOMSPRT}

While the almost sure convergence \eqref{LLRasSimple} for the LLRs guarantees lower bounds \eqref{LBmomentssimple}, in the general non-i.i.d.\ case, 
this condition is not sufficient for asymptotic optimality of the \mbox{MSPRT} since it
does not even guarantee the finiteness of the moments~$\Eb_i [T_*^r]$ of the \mbox{MSPRT}'s stopping time. To establish the optimality of the \mbox{MSPRT} a 
strengthening is needed, such as a certain convergence rate in the strong law.

Lai~\cite{Lai-as81-SPRT} proved the asymptotic optimality of Wald's SPRT for testing two hypotheses under the $r$-quick version of the SLLN for the LLR $\lambda(n)/n$, 
i.e., for the models with dependent and asymptotically stationary observations. Tartakovsky~\cite{Tartakovsky-SQA98} generalized Lai's result to the case of multiple 
hypotheses with asymptotically non-stationary observations proving that the \mbox{MSPRT} is asymptotically optimal as long as LLRs $\lambda_{ij}(n)/\psi(n)$ converge 
$r$-quickly to finite numbers $I_{ij}$. Below we relax the $r$-quick convergence condition used in \cite{Lai-as81-SPRT,Tartakovsky-SQA98} by the  $r$-complete convergence.

\begin{definition}
[\noindent\textbf{\textsl{$r$-Complete Convergence}}] We say that the sequence of random variables~$\{Y_n\}_{n\ge 1}$ converges to a random variable~$Y$ 
{\em $r$-completely}  as $n\to\infty$ under probability measure $\Pb$ and write $Y_n \xra[n\to \infty]{\text{$\Pb$-$r$-completely}} Y$ if
\[
\lim_{n\to\infty} \sum_{t=n}^\infty t^{r-1} \Pb(|Y_t-Y| > \varepsilon) =0 \quad \text{for every} ~ \varepsilon >0, 
\]
which is equivalent to 
\[
\sum_{n=1}^\infty n^{r-1} \Pb(|Y_n-Y| > \varepsilon) < \infty \quad \text{for every} ~ \varepsilon >0 .
\]
\end{definition}

As we will see below, a sufficient condition for asymptotic optimality of the \mbox{MSPRT} is the $r$-complete convergence of $\lambda_{ij}(n)/\psi(n)$ to numbers $I_{ij}$, $0< I_{ij}<\infty$ as 
$n\to\infty$ under $\Pb_i$, i.e., that for all $i,j=0,1,\dots,N$ ($i\neq j$)
\begin{equation}\label{rcompleteLLR}
\lim_{n\to\infty} \sum_{t=n}^\infty t^{r-1} \Pb_i\brc{\left |\frac{\lambda_{ij}(t)}{\psi(t)} - I_{ij} \right | > \varepsilon} =0 \quad \text{for every} ~ \varepsilon >0.
\end{equation}

The following theorem provides the asymptotic upper bounds for moments of the sample sizes of the \mbox{MSPRT} which along with the lower bounds 
\eqref{LBmomentssimple}  allow us to conclude that the \mbox{MSPRT} minimizes moments of the sample sizes up to order $r$ in class $\class_{\rm sim}(\alphab)$.

\begin{thm} \label{Th:UBsimple}
Assume that there exist an increasing function $\psi(t)$, $\psi(\infty) = \infty$, satisfying condition \eqref{CondPsi}, and positive and finite numbers $I_{ij}$, $i,j=0,1,\dots,N$, $i\neq j$ 
such that for all  $i,j=0,1,\dots,N$ ($i\neq j$) and some $r \ge 1$
\begin{equation}\label{rcompleteuppersimple}
\lim_{n\to\infty} \sum_{t=n}^\infty t^{r-1} \Pb_i\brc{\frac{\lambda_{ij}(t)}{\psi(t)}  - I_{ij} < -\varepsilon} =0 \quad \text{for every} ~ \varepsilon >0.
\end{equation}
 If the thresholds in the \mbox{MSPRT} are so selected that $\alpha_{ij}(\Dsf_*) \le \alpha_{ij}$ and $a_{ji} \sim \log \alpha_{ji}^{-1}$ as $\alpha_{\max} \to 0$, 
 in particular as $a_{ji} = \log \alpha_{ji}^{-1}$, then for all  $i=0,1,\dots,N$
\begin{equation}\label{UBmomentssimple}
\Eb_i[T_*^r] \le \brcs{\Psi\brc{ \max_{j\in \Ni} \frac{|\log \alpha_{ji}|}{I_{ij}} }}^r (1+o(1)) \quad  \text{as}~~ \alpha_{\max} \to 0.
\end{equation}
\end{thm}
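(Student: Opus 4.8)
The plan is to derive the matching upper bound by controlling the single accepting time $T_i$ under $\Pb_i$ and then combining with the lower bound \eqref{LBmomentssimple} of Corollary~\ref{Cor:LBsimple}. First I would note that, by the definition \eqref{D1} of the \mbox{MSPRT}, $T_* = \min_{k\in\Nc_0}T_k \le T_i$ on every sample path, so that $\Eb_i[T_*^r] \le \Eb_i[T_i^r]$; it therefore suffices to bound $\Eb_i[T_i^r]$ from above. The crucial next step is a one-step tail estimate: since by \eqref{Ti} the occurrence of $\{\lambda_{ij}(n)\ge a_{ji}\text{ for all }j\in\Ni\}$ forces $T_i\le n$, we have the inclusion $\{T_i>n\}\subseteq\bigcup_{j\in\Ni}\{\lambda_{ij}(n)<a_{ji}\}$, whence $\Pb_i(T_i>n)\le\sum_{j\in\Ni}\Pb_i(\lambda_{ij}(n)<a_{ji})$ by Boole's inequality. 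This replaces the stopping-time tail by a sum of left tails of the (non-monotone) LLRs evaluated at the single time $n$, which is precisely the quantity that the one-sided $r$-complete convergence \eqref{rcompleteuppersimple} controls.

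Writing $M_i=\max_{j\in\Ni}a_{ji}/I_{ij}$ and $N_i=\Psi(M_i)$ for the target level on the right of \eqref{UBmomentssimple}, I would use the representation $\Eb_i[T_i^r]=\sum_{n\ge0}[(n+1)^r-n^r]\Pb_i(T_i>n)$ and split the sum at $n_\delta=\Psi((1+\delta)M_i)$ for a fixed small $\delta>0$. On $\{n\le n_\delta\}$ the crude bound $\Pb_i(T_i>n)\le1$ contributes at most $n_\delta^r=[\Psi((1+\delta)M_i)]^r$. For $n>n_\delta$ the monotonicity of $\psi$ gives $\psi(n)>(1+\delta)M_i\ge(1+\delta)a_{ji}/I_{ij}$, hence $a_{ji}/\psi(n)<I_{ij}/(1+\delta)$, so that $\{\lambda_{ij}(n)<a_{ji}\}\subseteq\{\lambda_{ij}(n)/\psi(n)-I_{ij}<-\varepsilon'I_{ij}\}$ with $\varepsilon'=\delta/(1+\delta)>0$. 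Combining $[(n+1)^r-n^r]\le C_r\, n^{r-1}$ with the tail estimate above, the tail of the sum is at most $C_r\sum_{j\in\Ni}\sum_{n>n_\delta}n^{r-1}\Pb_i(\lambda_{ij}(n)/\psi(n)-I_{ij}<-\varepsilon'I_{ij})$, which tends to $0$ as $\alpha_{\max}\to0$ by \eqref{rcompleteuppersimple}, since $n_\delta\to\infty$ (because $a_{ji}\sim|\log\alpha_{ji}|\to\infty$).

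Putting the two pieces together yields $\Eb_i[T_i^r]\le[\Psi((1+\delta)M_i)]^r+o(1)$. Dividing by $N_i^r=[\Psi(M_i)]^r$, letting $\alpha_{\max}\to0$ and then $\delta\to0$, and invoking condition \eqref{CondPsi} in the form $\lim_{\delta\to0}\lim_{M_i\to\infty}\Psi((1+\delta)M_i)/\Psi(M_i)=1$, I obtain $\limsup_{\alpha_{\max}\to0}\Eb_i[T_i^r]/N_i^r\le1$. Finally, since $a_{ji}\sim|\log\alpha_{ji}|$ and there are only finitely many indices $j$, one has $M_i\sim\max_{j\in\Ni}|\log\alpha_{ji}|/I_{ij}$, and a further application of \eqref{CondPsi} identifies $N_i$ with $\Psi(\max_{j\in\Ni}|\log\alpha_{ji}|/I_{ij})$ up to a factor $1+o(1)$; together with $\Eb_i[T_*^r]\le\Eb_i[T_i^r]$ this gives \eqref{UBmomentssimple}.

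The routine parts are the measure-theoretic manipulations and the elementary estimate on $(n+1)^r-n^r$; the delicate steps are the two invocations of \eqref{CondPsi}, which must be read carefully so as to guarantee simultaneously that $n_\delta^r/N_i^r\to1$ and that replacing $a_{ji}$ by $|\log\alpha_{ji}|$ costs only a factor $1+o(1)$, together with the verification that the tail sum is genuinely $o(1)$. The last point is exactly where the strengthening from the almost-sure convergence \eqref{LLRasSimple} to the one-sided $r$-complete convergence \eqref{rcompleteuppersimple} is indispensable, since almost-sure convergence alone would not even ensure the finiteness of $\Eb_i[T_i^r]$.
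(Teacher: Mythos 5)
Your proposal is correct and follows essentially the same route as the paper's proof: the reduction $\Eb_i[T_*^r]\le\Eb_i[T_i^r]$, the inclusion $\{T_i>n\}\subseteq\bigcup_{j\in\Ni}\{\lambda_{ij}(n)<a_{ji}\}$, absorbing the threshold via the split point so that the left tail of $\lambda_{ij}(n)/\psi(n)$ is controlled by the one-sided $r$-complete condition \eqref{rcompleteuppersimple}, and the two-step limit using \eqref{CondPsi}. The only differences are cosmetic: you re-derive inline (via the telescoping sum and $(n+1)^r-n^r\le C_r n^{r-1}$) the moment--tail inequality that the paper cites as Lemma~A.1 of \cite{Tartakovsky_book2020}, and you parameterize the split multiplicatively as $\Psi((1+\delta)M_i)$ where the paper uses $\Psi\brc{\max_{j\in\Ni} a_{ji}/(I_{ij}-\varepsilon)}$.
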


\begin{proof}
For $i,j=0,1,\dots,N$ and $0<\varepsilon < \max_{i,j\in \Nc_0} I_{ij}$, define $\ab=(a_{ij})_{i,j\in \Nc_0}$ and
\[
M_i(\ab,\varepsilon) = 1 + \Psi\brc{\max_{j\in\Nc_0\setminus i} \frac{a_{ji}}{I_{ij}-\varepsilon}} .
\]
Noting that $\Eb_i[T_*^r] \le \Eb_i[T_i^r]$ and setting $\tau=T_i$ and $N=M_i(\ab,\varepsilon)$ in Lemma~A.1 (Appendix A, page 239) in Tartakovsky~\cite{Tartakovsky_book2020} 
we obtain the inequalities
\begin{equation}\label{ExpTisimple}
\Eb_i[T_*^r] \le \Eb_i[T_i^r] \le M_i^r + r 2^{r-1} \sum_{n=M_i}^\infty n^{r-1} \Pb_i \brc{T_i > n}, ~~ i =0,1,\dots,N.
\end{equation}

By the definition of the Markov time $T_i$ (see \eqref{Ti}), we have 
\begin{align*}
\Pb_{i}(T_i > n)  & = \Pb_{i}\set{\max_{1 \le t \le n} \min_{j \in \Nc_0\setminus i} \brcs{\lambda_{ij}(t) -a_{ji}} <0}
 \\
 & \le \sum_{j \in \Nc_0\setminus i} \Pb_{i}\set{\lambda_{ij}(n) < a_{ji}} ,
\end{align*}
where for $n \ge M_i(\ab,\varepsilon)$ the probability $\Pb_{i}\set{\lambda_{ij}(n) < a_{ji}}$ does not exceed the probability 
$\Pb_i\set{\lambda_{i j}(n)/\psi(n) < I_{i j} - \varepsilon}$, and therefore, for all sufficiently large $n$
\begin{equation}\label{ineqprobbiggernsimple}
\Pb_i(T_i > n) \le \sum_{j \in \Nc_0\setminus i} \Pb_i\set{\frac{\lambda_{ij}(n)}{\psi(n)} < I_{i j} - \varepsilon} .
\end{equation}
Substituting \eqref{ineqprobbiggernsimple} into \eqref{ExpTisimple} yields
\begin{equation*}
\Eb_i[T_*^r] \le  M_i(\ab,\varepsilon)^r + r 2^{r-1} \sum_{j \in \Nc_0\setminus i}\sum_{n=M_i(\ab,\varepsilon)}^\infty n^{r-1} \Pb_i\set{\frac{\lambda_{i j}(n)}{\psi(n)} < I_{i j} 
- \varepsilon}.
\end{equation*}
Since $M_i(\ab,\varepsilon)\to \infty$ as $a_{\min} \to \infty$, by condition \eqref{rcompleteuppersimple}, the second term goes to $0$ and, hence, for all $i\in \Nc_0$
\[
\Eb_i[T_*^r] \le  M_i(\ab,\varepsilon)^r +o(1) ~~ \text{as}~ a_{\min}\to \infty,
\]
where  $\varepsilon$ can be arbitrarily small, so taking the limit $\varepsilon \to 0$ and noticing that due to condition \eqref{CondPsi}
\[
\lim_{\varepsilon\to0} \lim_{a_{\min}\to0} [M_i(\ab,\varepsilon)/M_i(\ab,0)]=1,
\] 
we obtain the asymptotic inequality
\begin{equation}\label{ExpTmsprtAineqasimple}
\Eb_i[T_*^r] \le  \brcs{\Psi\brc{\max_{j\in\Nc_0\setminus i} \frac{a_{ji}}{I_{ij}}}}^r (1+o(1)) \quad \text{as}~ a_{\min}\to \infty.
\end{equation}
Setting $a_{ji} = |\log \alpha_{ji}|$ or, more generally, $a_{ji} \sim |\log \alpha_{ji}|$ (assuming $\alpha_{ij}(\Dsf_*) \le \alpha_{ij}$), 
gives the required inequality \eqref{UBmomentssimple}.
\end{proof}

Corollary \ref{Cor:LBsimple} and Theorem~\ref{Th:UBsimple} give the following first-order asymptotic optimality result.

\begin{thm} \label{Th:AOMSPRTsimple}
Assume that for some  $0<I_{ij}< \infty $, $i,j=0,1,\dots,N$, $i\neq j$ and  increasing function $\psi(n)$, $\psi(\infty) = \infty$, satisfying condition \eqref{CondPsi}, 
the normalized LLRs $\lambda_{ij}(n)/\psi(n)$ converge
$r$-completely to $I_{ij}$ under $\Pb_i$ as $n \to \infty$, i.e., for all  $i,j=0,1,\dots,N$ ($i\neq j$) and some $r \ge 1$ condition \eqref{rcompleteLLR} holds.
 If the thresholds in the \mbox{MSPRT} are so selected that $\alpha_{ij}(\Dsf_*) \le \alpha_{ij}$ and $a_{ji} \sim \log \alpha_{ji}^{-1}$ as $\alpha_{\max} \to 0$, in particular as 
 $a_{ji} = \log \alpha_{ji}^{-1}$, then for all  $i=0,1,\dots,N$
\begin{equation}\label{AOmomentssimple}
\Eb_i[T_*^r] \sim  \brcs{\Psi\brc{\max_{j\in \Ni} \frac{|\log \alpha_{ji}|}{I_{ij}} }}^r \sim \inf_{\Dsf\in\class_{\rm sim}(\alphab)} \Eb_i[T^r] \quad \text{as}~ \alpha_{\max} \to 0.
\end{equation}
\end{thm}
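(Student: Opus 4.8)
The plan is to obtain \eqref{AOmomentssimple} by sandwiching $\Eb_i[T_*^r]$ between the lower bound of Corollary~\ref{Cor:LBsimple} and the upper bound of Theorem~\ref{Th:UBsimple}, both of which carry the identical leading term $[\Psi(\max_{j\in\Ni}|\log\alpha_{ji}|/I_{ij})]^r$. The only real task is to check that the two-sided $r$-complete convergence hypothesis \eqref{rcompleteLLR} supplies, in one stroke, the two distinct one-sided conditions that these results separately require, and that the threshold calibration keeps $\Dsf_*$ inside $\class_{\rm sim}(\alphab)$.

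First I would secure the lower bound. Corollary~\ref{Cor:LBsimple} rests on the right-tail condition \eqref{Probmaxsimple}, and by Remark~\ref{Rem:ASsimple} it suffices that $\lambda_{ij}(n)/\psi(n)\to I_{ij}$ almost surely under $\Pb_i$. Since $r\ge 1$, for each $\varepsilon>0$ the hypothesis \eqref{rcompleteLLR} gives
\[
\sum_{n=1}^\infty \Pb_i\brc{\left|\frac{\lambda_{ij}(n)}{\psi(n)}-I_{ij}\right|>\varepsilon}\le \sum_{n=1}^\infty n^{r-1}\,\Pb_i\brc{\left|\frac{\lambda_{ij}(n)}{\psi(n)}-I_{ij}\right|>\varepsilon}<\infty,
\]
so the Borel--Cantelli lemma delivers the required $\Pb_i$-a.s.\ convergence, and hence the lower bound \eqref{LBmomentssimple} holds.

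Next I would establish the upper bound via Theorem~\ref{Th:UBsimple}, whose sole hypothesis is the one-sided lower-deviation condition \eqref{rcompleteuppersimple}. This is immediate from \eqref{rcompleteLLR}, since for every $t$
\[
\Pb_i\brc{\frac{\lambda_{ij}(t)}{\psi(t)}-I_{ij}<-\varepsilon}\le \Pb_i\brc{\left|\frac{\lambda_{ij}(t)}{\psi(t)}-I_{ij}\right|>\varepsilon},
\]
so the truncated tail sums in \eqref{rcompleteuppersimple} are dominated termwise by those in \eqref{rcompleteLLR} and therefore vanish. With the thresholds chosen as stated, namely $a_{ji}\sim|\log\alpha_{ji}|$ together with $\alpha_{ij}(\Dsf_*)\le\alpha_{ij}$ (so that $\Dsf_*\in\class_{\rm sim}(\alphab)$), Theorem~\ref{Th:UBsimple} yields \eqref{UBmomentssimple}.

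Finally, combining \eqref{LBmomentssimple} and \eqref{UBmomentssimple}, whose leading terms coincide, squeezes $\Eb_i[T_*^r]$ between two quantities each asymptotically equal to $[\Psi(\max_{j\in\Ni}|\log\alpha_{ji}|/I_{ij})]^r$, which is exactly \eqref{AOmomentssimple}. There is no genuine analytic obstacle in this argument; the only point demanding care is the bookkeeping that the single symmetric hypothesis \eqref{rcompleteLLR} simultaneously feeds the almost sure convergence invoked through Corollary~\ref{Cor:LBsimple} and the one-sided tail condition \eqref{rcompleteuppersimple} required by Theorem~\ref{Th:UBsimple}, while the threshold choice guarantees admissibility of $\Dsf_*$ in $\class_{\rm sim}(\alphab)$.
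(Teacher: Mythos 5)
Your proposal is correct and follows essentially the same route as the paper: the paper likewise notes that the two-sided condition \eqref{rcompleteLLR} implies both \eqref{Probmaxsimple} (via the a.s.\ convergence route of Remark~\ref{Rem:ASsimple}, which you make explicit through Borel--Cantelli) and the one-sided condition \eqref{rcompleteuppersimple}, then sandwiches $\Eb_i[T_*^r]$ between the lower bound \eqref{LBmomentssimple} of Corollary~\ref{Cor:LBsimple} (applicable since the threshold calibration puts $\Dsf_*$ in $\class_{\rm sim}(\alphab)$) and the upper bound \eqref{UBmomentssimple} of Theorem~\ref{Th:UBsimple}. Your version merely spells out details the paper labels ``obvious,'' so there is nothing to correct.
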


\begin{proof}
Obviously, the $r$-complete convergence condition implies both conditions \eqref{Probmaxsimple} in Corollary~\ref{Cor:LBsimple} and \eqref{rcompleteuppersimple} in 
Theorem~\ref{Th:UBsimple}. Hence, using \eqref{LBmomentssimple} in Corollary~\ref{Cor:LBsimple}, we obtain
\[
\Eb_i[T^r_*] \ge \brcs{\Psi\brc{\max_{j\in \Ni}  \frac{|\log \alpha_{ji}|}{I_{ij}} }}^r (1+o(1)) \quad  \text{as}~~ \alpha_{\max} \to 0.
\]
This inequality along with the reverse inequality \eqref{UBmomentssimple} gives the asymptotic approximation
\[
\Eb_i[T_*^r] = \brcs{\Psi\brc{\max_{j\in \Ni}  \frac{|\log \alpha_{ji}|}{I_{ij}} }}^r (1+o(1)) \quad  \text{as}~~ \alpha_{\max} \to 0,
\]
which, by the lower bound \eqref{LBmomentssimple}, is the best one can do in class $\class_{\rm sim}(\alphab)$.  Thus, both asymptotic equalities in \eqref{AOmomentssimple} follow and the proof is complete.
\end{proof}

\begin{remark}
 Inequalities \eqref{PEupperbounds} for error probabilities imply that the \mbox{MSPRT} belongs to $\class_{\rm sim}(\alphab)$ with $\alpha_{ij} = \exp\{-a_{ij}\}$, 
 so Corollary~\ref{Cor:LBsimple} implies asymptotic lower bounds
\[
\Eb_i[T^r_*] \ge \brcs{\Psi\brc{\max_{j\in \Ni} \frac{a_{ji}}{I_{ij}}}}^r (1+o(1)) \quad  \text{as}~~ a_{\min} \to \infty,
\]
while the upper bounds \eqref{ExpTmsprtAineqasimple}
yield reverse inequalities. Therefore, the following approximations for moments
of the sample sizes of the \mbox{MSPRT} as functions of thresholds $\ab=(a_{ij})$ hold regardless of the probabilities of errors
\[
\Eb_i[T^r_*] = \brcs{\Psi\brc{ \max_{j\in \Ni} \frac{a_{ji}}{I_{ij}} }}^r (1+o(1)) \quad  \text{as}~~ a_{\min} \to \infty.
\]
These approximations can be useful in different problem settings, e.g., in Bayes problems.
\end{remark}

\section{Asymptotic optimality of likelihood ratio based sequential multi-decision rules for composite hypotheses} \label{s:Rulescomposite}

Consider now the problem of testing $N+1$ composite hypotheses $\Hyp_i:\Pb=\Pb_\theta$, $\theta\in \Theta_i$, $i=0,1,\dots,N$ with joint densities \eqref{jointdenscomp}.  
In this case, we focus on the class of tests $\class(\alphab)$ defined in \eqref{Mclassescomp} that confines the maximal error probabilities 
$\sup_{\theta\in\Theta_i} \Pb_\theta(d=j)$. 

Recall that we suppose that the parameter space $\Theta$ is split into $N+2$ disjoint subsets $\Theta_0,\Theta_1,\dots,\Theta_N$, $\Theta_{\rm in}$, so
$\Theta=\sum_{i=0}^N \Theta_i + \Theta_{\rm in}$, where $\Theta_{\rm in}$ is the indifference zone.

\subsection{The matrix mixture sequential probability ratio test}\label{ss:MMSPRT}

Let $\pi$ be a mixing measure (prior distribution for $\theta$) on $\Theta$, $\pi(\theta)>0$ for all $\theta\in\Theta$ and $\int_\Theta \pi(\D\theta)=1$.\footnote{The results
also hold for improper measures with minor constraints if $\Theta$ is a compact set.} For $i\in \Nc_0$  
and $n \ge 1$, define 
\begin{align*}
g_{n}(\Xb^n_1) & = \int_{\Theta}  p_{\theta,n}(\Xb^n_1) \pi(\D \theta) = \int_{\Theta} \prod_{t=1}^n f_{\theta,t}(X_t | \Xb^{t-1}_1) \pi(\D \theta) ;
\\
\hat{g}_{i,n}(\Xb^n_1) &=   \sup_{\theta\in\Theta_j}  p_{\theta,n}(\Xb^n_1) = \sup_{\theta\in\Theta_i}\prod_{t=1}^n f_{\theta,t}(X_t | \Xb^{t-1}_1)
\end{align*}
and introduce the statistics 
\begin{align}
\Lambda^{\pi}_{i}(n) &=  \frac{\int_{\Theta} \prod_{t=1}^n f_{\theta,t}(X_t | \Xb^{t-1}_1) \pi(\D \theta)}{\sup_{\theta\in\Theta_i}\prod_{t=1}^n f_{\theta,t}(X_t | \Xb^{t-1}_1)} =
 \frac{g_{n}(\Xb^n_1) }{\hat{g}_{i,n}(\Xb^n_1) } ,
\label{Lambdapi}
\\
\lambda^{\pi}_{i}(n) &= \log \Lambda^{\pi}_{i}(n) = \log g_{n}(\Xb^n_1)  - \log \hat{g}_{i,n}(\Xb^n_1)  .
\label{lambdapi}
\end{align}

For a $(N+1)\times (N+1)$  matrix $(A_{ij})_{i,j\in\Nc_0}$ of thresholds, with $A_{ij} > 0$ and the $A_{ii}$ are immaterial, define 
the Matrix Mixture SPRT (\mbox{MMSPRT}) $\Dsf_*^{\pi} = (T_*^{\pi}, d_*^{\pi})$ as follows: Stop at the first $n \ge 1$ such that, for some $i\in\Nc_0$, 
$\Lambda_{j}^{\pi}(n) \ge A_{ji}$ for all $j \neq i$ and accept the unique~$\Hyp_i$ that satisfies these inequalities.  
Setting $a_{ji}= \log A_{ji}$ and introducing the Markov accepting times for the hypotheses~$\Hyp_i$, $i=0,1,\dots,N$ as
\begin{equation} \label{Ticomposite}
\begin{aligned}
T_i^\pi  & =\inf\set{ n \ge 1: \lambda_{j}^{\pi}(n)\ge a_{ji} ~ \text{for all}~ j \in \Nc_0 \setminus i } 
\\
& =\inf\set{ n \ge 1: \min_{j \in \Nc_0\setminus i}\brcs{\lambda_{j}^{\pi}(n) -  a_{ji}} \ge 0},   \quad i =0,1,\dots,N,
\end{aligned}
\end{equation}
the \mbox{MMSPRT}  can be written as
\begin{equation} \label{MMSPRT}
T_*^{\pi}=\min_{k \in \Nc_0} T_k^{\pi}, \qquad d_*^{\pi}=i \quad \mbox{if} \quad T_*^{\pi}= T_i^{\pi}.
\end{equation}

\subsection{Error Probabilities of the \mbox{MMSPRT}} \label{ss:PEMMSPRT}

The following lemma provides upper bounds for the error probabilities $\alpha_{ij}(\Dsf_*^\pi, \theta)=\Pb_{\theta}(d_*^\pi=j)$, $\theta\in \Theta_i$ of the \mbox{MMSPRT} $\Dsf_*^{\pi}$. 

\begin{lemma}\label{Lem:PEMMSPRT}
The following upper bounds on the error probabilities of the \mbox{MMSPRT}  hold:
\begin{equation}  \label{UpperPEMMSPRT}
\sup_{\theta\in\Theta_i} \alpha_{ij}(\Dsf_*^{\pi}, \theta)  \le \exp\set{- a_{ij}}  \quad \text{for} ~ i,j =0,1,\dots,N, ~ i \neq  j.
\end{equation}
Therefore, if  $a_{ij}  =  \log(1/\alpha_{ij})$ then  $\Dsf_*^\pi \in \class(\alphab)$ . 
\end{lemma}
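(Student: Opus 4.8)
The plan is to combine Wald's likelihood ratio identity with a change of measure to the mixture measure, exploiting the fact that the supremum in the denominator of $\Lambda_i^\pi$ dominates any single density with parameter in $\Theta_i$. To set this up, introduce the mixture probability measure $\Pb_\pi$ whose restriction to $\Fc_n$ has density $g_n = \int_\Theta p_{\theta,n}(\Xb_1^n)\,\pi(\D\theta)$ with respect to the dominating $\sigma$-finite measure; since $\pi$ is a probability measure, each $\Pb_\pi^n$ is a genuine probability measure, and because $\pi(\theta)>0$ on $\Theta$ together with the assumed local mutual absolute continuity forces $g_n>0$ wherever $p_{\theta,n}>0$, we have $\Pb_\theta^n \ll \Pb_\pi^n$ with likelihood ratio $p_{\theta,n}/g_n = \D\Pb_\theta^n/\D\Pb_\pi^n$. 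I write $\Eb_\pi$ for expectation under $\Pb_\pi$.

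First I would record the geometry of the acceptance region. Fix $i\neq j$ and $\theta\in\Theta_i$. On the event $\set{d_*^\pi=j}=\set{T_*^\pi=T_j^\pi}$, the definition of the accepting time $T_j^\pi$ in \eqref{Ticomposite} forces $\lambda_k^\pi(T_*^\pi)\ge a_{kj}$ for every $k\in\Nc_0\setminus j$; taking $k=i$ gives $\lambda_i^\pi(T_*^\pi)\ge a_{ij}$, i.e. $\Lambda_i^\pi(T_*^\pi)\ge e^{a_{ij}}$. Since $\theta\in\Theta_i$, the supremum defining $\hat g_{i,n}$ dominates the single density, $\hat g_{i,n}\ge p_{\theta,n}$, so on $\set{d_*^\pi=j}$
\[
\frac{p_{\theta,T_*^\pi}(\Xb_1^{T_*^\pi})}{g_{T_*^\pi}(\Xb_1^{T_*^\pi})} \le \frac{\hat g_{i,T_*^\pi}(\Xb_1^{T_*^\pi})}{g_{T_*^\pi}(\Xb_1^{T_*^\pi})} = \frac{1}{\Lambda_i^\pi(T_*^\pi)} \le e^{-a_{ij}}.
\]

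Next I would change the measure. Applying Wald's likelihood ratio identity to $p_{\theta,n}/g_n=\D\Pb_\theta^n/\D\Pb_\pi^n$ -- rigorously, by stratifying on $\set{T_*^\pi=n}$, using that $\set{d_*^\pi=j}\cap\set{T_*^\pi=n}\in\Fc_n$ carries the fixed-$n$ identity $\D\Pb_\theta=(p_{\theta,n}/g_n)\,\D\Pb_\pi$, and summing over $n$ (legitimate because $\set{d_*^\pi=j}\subseteq\set{T_*^\pi<\infty}$ by definition of the decision rule) -- I obtain
\[
\alpha_{ij}(\Dsf_*^\pi,\theta)=\Pb_\theta(d_*^\pi=j)=\Eb_\pi\!\left[\Ind{d_*^\pi=j}\,\frac{p_{\theta,T_*^\pi}}{g_{T_*^\pi}}\right]\le e^{-a_{ij}}\,\Pb_\pi(d_*^\pi=j)\le e^{-a_{ij}}.
\]
As the right-hand side is free of $\theta$, taking the supremum over $\theta\in\Theta_i$ yields \eqref{UpperPEMMSPRT}, and the membership $\Dsf_*^\pi\in\class(\alphab)$ follows at once on setting $a_{ij}=\log(1/\alpha_{ij})$.

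The only genuine obstacle is the change-of-measure step: one must verify that $\Pb_\pi$ is a consistent family of probability measures and that the fixed-$n$ likelihood ratio identity transfers to the random time $T_*^\pi$. The stratification over $\set{T_*^\pi=n}$ handles this with no martingale machinery, and the potential worry about $\Pb_\theta(T_*^\pi<\infty)$ evaporates because the relevant event $\set{d_*^\pi=j}$ is contained in $\set{T_*^\pi<\infty}$ to begin with. The two structural inputs -- the domination $\hat g_{i,n}\ge p_{\theta,n}$ for $\theta\in\Theta_i$ and the threshold crossing $\lambda_i^\pi(T_*^\pi)\ge a_{ij}$ on the acceptance set -- are direct read-offs from the definitions, so the argument is essentially the classical SPRT error-bound argument with the mixture playing the role of the alternative density.
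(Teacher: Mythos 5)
Your proof is correct, and it rests on exactly the two structural facts the paper uses — the threshold crossing $\Lambda_i^\pi(T_*^\pi)\ge e^{a_{ij}}$ on $\set{d_*^\pi=j}$ and the domination $\hat g_{i,n}\ge p_{\theta,n}$ for $\theta\in\Theta_i$ — but it organizes the change of measure differently. The paper never leaves the genuine measures $\set{\Pb_\vartheta}$: it bounds $\Ind{d_*^\pi=j}\le\Ind{T_j^\pi<\infty}$, writes $\Lambda_i^\pi(n)\le\int_\Theta\Lambda_{\vartheta,\theta}(n)\,\pi(\D\vartheta)$, applies Wald's likelihood ratio identity separately for each pair $(\vartheta,\theta)$ to get $\Eb_\theta\brcs{\Ind{T_j^\pi<\infty}\Lambda_{\vartheta,\theta}(T_j^\pi)}=\Pb_\vartheta(T_j^\pi<\infty)$, and then integrates over $\vartheta$ by Fubini, bounding $\int_\Theta\Pb_\vartheta(T_j^\pi<\infty)\,\pi(\D\vartheta)\le 1$. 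You instead perform a single change of measure to the mixture $\Pb_\pi$ with $\Fc_n$-density $g_n$, which obliges you to verify (as you correctly flag) that $\set{g_n}$ is a consistent family — immediate by Fubini from consistency of each $\set{p_{\theta,n}}$ — and that $\Pb_\theta^n\ll\Pb_\pi^n$, which indeed follows from $\pi>0$ and the assumed local mutual equivalence of the $\Pb_\vartheta^n$. Your stratification over $\set{T_*^\pi=n}$ is a legitimate substitute for Wald's identity here, and your bound is marginally tighter since you retain the factor $\Pb_\pi(d_*^\pi=j)$ rather than enlarging to $\set{T_j^\pi<\infty}$, though both factors are simply bounded by $1$ so the final inequality \eqref{UpperPEMMSPRT} is the same. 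What the paper's route buys is that no new measure need be constructed at all — only changes $\Pb_\theta\to\Pb_\vartheta$ between measures already given in the model — so no consistency or extension question ever arises; what your route buys is a cleaner one-line identity in which the mixture plays the role of the alternative density in the classical SPRT error-bound argument.
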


\begin{proof}
 Observe that $\{d_*^{\pi}=j\}=\{T_*^{\pi}=T_j^\pi\}$ implies $\{T_j^\pi < \infty\}$ and $\Lambda_{i}^\pi(T_j^\pi)   \ge A_{ij} = e^{a_{ij}}$ on $\{T_j^\pi < \infty\}$. 
 So, for all $\theta\in\Theta_i$ ($i\neq j$), we have the following chain of equalities and inequalities
 \begin{align*}
\alpha_{ij}(\Dsf_*^\pi, \theta) &= \Eb_{\theta} \brcs{\Ind{d_*^{\pi} =j}} \le \Eb_{\theta} \brcs{\Ind{T_j^\pi < \infty}}
\\
& = \Eb_{\theta} \brcs{\Ind{T_j^\pi < \infty} \Lambda_{i}^\pi(T_j^\pi)/\Lambda_{i}^\pi(T_j^\pi)}  
\\
&\le \exp\set{- a_{ij}}  \Eb_{\theta} \brcs{\Ind{T_j^\pi < \infty} \Lambda_{i}^\pi(T_j^\pi)} .
\end{align*}
 Since, obviously,  for any $n\ge 1$ and all~$\theta\in\Theta_i$
 $$
 \Lambda_{i}^\pi(n) \le \int_{\Theta} \Lambda_{\vartheta, \theta}(n) \pi(\D\vartheta)
 $$ 
 and, by the Wald likelihood ratio identity,
 \[
 \Eb_{\theta} \brcs{\Ind{T_j^\pi < \infty}\Lambda_{\vartheta, \theta}(T_j^\pi)} = \Pb_{\vartheta} \brc{T_j^\pi < \infty},
 \]
 we obtain that  for all $i\in \Nc_0\setminus j$
 \begin{align*}
\sup_{\theta\in\Theta_i} \alpha_{ij}(\Dsf_*^\pi, \theta) & \le \exp\set{- a_{ij}} \, \sup_{\theta\in\Theta_i} \int_{\Theta} \Eb_{\theta} \brcs{\Ind{T_j^\pi < \infty}
\Lambda_{\vartheta, \theta}(T_j^\pi)} \pi(\D \vartheta)
 \\
 & = \exp\set{- a_{ij}}  \, \int_\Theta \Pb_{\vartheta} \brc{T_j^\pi < \infty} \pi(\D \vartheta) \le  \exp\set{- a_{ij}} ,
\end{align*}
which proves the upper bound \eqref{UpperPEMMSPRT} and gives the Lemma. 
\end{proof}

 \subsection{First-order uniform asymptotic optimality of the \mbox{MMSPRT}}\label{ss:AOMMSPRT}

For general non-i.i.d.\ models, the SLLN \eqref{LLRasComposite} for the LLR does 
not guarantee the optimality of the \mbox{MMSPRT}. As for simple hypotheses, 
a sort of $r$-complete convergence suffices for the asymptotic optimality.  The following $r$-complete convergence-type conditions for left-tail probabilities are sufficient. For the convenience sake, we write $\theta_i$ for $\theta$  when $\theta$ belongs to  $\Theta_i$. 

For $\theta \notin \Theta_j$, define 
\[
\wtlambda_{\theta, j}(n) =  \log \brcs{\frac{\prod_{t=1}^n f_{\theta,t}(X_t | \Xb^{t-1}_1)}{\sup_{\theta\in\Theta_j}\prod_{t=1}^n f_{\theta,t}(X_t | \Xb^{t-1}_1)}} = 
\log \brcs{\frac{p_{\theta,n}(\Xb^n_1)}{\sup_{\theta\in\Theta_j} p_{\theta,n}(\Xb^n_1)}},
\]
and for $\theta\in\Theta$,
\[
\Upsilon_{\theta, j, \delta, \varepsilon, r}(n)=\sum_{t=n}^\infty t^{r-1} \Pb_{\theta} \set{\inf_{\vartheta\in \Gamma_{\delta,\theta}}\widetilde{\lambda}_{\vartheta, j}(t) 
<(I_{j}(\theta)-\varepsilon) \psi(t)} ,
\]
where $I_j(\theta)=\inf_{\vartheta\in \Theta_j} I(\theta,\vartheta)$ is the ``distance'' between the point $\theta$ and the subset $\Theta_j$, which is assumed strictly positive for all 
$\theta\notin \Theta_j$ (see \eqref{Ipositive}), and
$\Gamma_{\delta,\theta} = \{\vartheta \in \Theta: |\vartheta-\theta| < \delta$\} is the $\delta$-neighborhood of the point $\theta$.

\vspace{2mm}

\noindent $\Cb 2$. \textit{Left-tail Condition}. There exist a positive continuous function $I(\theta,\vartheta)$, satisfying condition \eqref{Ipositive},
such that  for any $\varepsilon >0$ and some $r \ge 1$
\begin{equation}\label{LTCond}
\lim_{n\to \infty} \limsup_{\delta\to 0}  \max_{\stackrel{i,j \in \Nc_0}{ j \neq i}} \sup_{\theta_i\in\Theta_i}\, \Upsilon_{\theta_i, j, \delta, \varepsilon, r}(n) =0 
\end{equation}
and
\begin{equation}\label{LTCond2}
\lim_{n\to \infty} \limsup_{\delta\to 0}  \max_{j \in \Nc_0} \sup_{\theta\in\Theta_{\rm in}}\, \Upsilon_{\theta, j, \delta, \varepsilon, r}(n) =0 .
\end{equation}

Recall that $F_{i,\theta}(\varepsilon, \alphab)$ and $F_{i,\theta}(\alphab)$ are defined in \eqref{Fdefinition} and \eqref{Fdefinition2}, respectively. 

The following theorem establishes the asymptotic upper bounds for moments of the \mbox{MMSPRT} sample sizes which together with the lower bounds \eqref{LBmomentscomposite1} and 
\eqref{LBmomentscomposite2}  imply asymptotic optimality of the \mbox{MMSPRT} in class $\class(\alphab)$.

\begin{thm} \label{Th:UBcomposite}
Assume that there exist an increasing function $\psi(t)$, $\psi(\infty) = \infty$, satisfying condition \eqref{CondPsi}, and a function $I(\theta,\vartheta)$, satisfying \eqref{Ipositive}, 
such that for some $r \ge 1$ left-tail condition $\Cb2$  holds. If the thresholds in the \mbox{MMSPRT} are so selected that 
$\sup_{\theta_i \in \Theta_i}\alpha_{ij}(\Dsf_*^\pi, \theta_i) \le \alpha_{ij}$ and 
$a_{ji} \sim \log \alpha_{ji}^{-1}$ as $\alpha_{\max} \to 0$, in particular as $a_{ji} = \log \alpha_{ji}^{-1}$, then as $\alpha_{\max} \to 0$
\begin{align}
\Eb_\theta[(T_*^\pi)^r] &  \le \brcs{F_{i,\theta}(\alphab)}^r (1+o(1)) ~~ \text{for all}~ \theta\in\Theta_i ~ \text{and}~  i \in \Nc_0;
\label{UBmomentscomposite1}
\\
\Eb_\theta[(T_*^\pi)^r]  & \le \brcs{\min_{0 \le i \le N} F_{i,\theta}(\alphab)}^r (1+o(1))~~ \text{for all}~ \theta\in\Theta_{\rm in} .
\label{UBmomentscomposite2}
\end{align}
\end{thm}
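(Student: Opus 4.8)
The plan is to transport the simple-hypothesis argument of Theorem~\ref{Th:UBsimple} to the mixture statistics, absorbing the mixture penalty into an asymptotically negligible additive shift. Fix $\theta\in\Theta_i$ (the indifference zone is handled identically at the end). Since $T_*^\pi=\min_{k\in\Nc_0}T_k^\pi\le T_i^\pi$, we have $\Eb_\theta[(T_*^\pi)^r]\le\Eb_\theta[(T_i^\pi)^r]$, so it suffices to bound the $r$th moment of the accepting time $T_i^\pi$ for the true hypothesis.

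First I would establish the crucial mixture lower bound. Restricting the integral defining $g_n$ to the $\delta$-neighborhood $\Gamma_{\delta,\theta}$ and dividing by $\hat g_{j,n}$ gives, for every $\delta>0$ and $n\ge1$,
\[
\Lambda^{\pi}_{j}(n)\ge\int_{\Gamma_{\delta,\theta}}e^{\wtlambda_{\vartheta,j}(n)}\,\pi(\D\vartheta)\ge\pi(\Gamma_{\delta,\theta})\,\exp\set{\inf_{\vartheta\in\Gamma_{\delta,\theta}}\wtlambda_{\vartheta,j}(n)},
\]
so that, writing $C_\delta=|\log\pi(\Gamma_{\delta,\theta})|<\infty$,
\[
\lambda^{\pi}_{j}(n)\ge\inf_{\vartheta\in\Gamma_{\delta,\theta}}\wtlambda_{\vartheta,j}(n)-C_\delta.
\]
This is the step that converts the mixture statistic into the infimum appearing in $\Upsilon_{\theta,j,\delta,\varepsilon,r}$, at the price of the fixed constant $C_\delta$.

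From the definition \eqref{Ticomposite} of $T_i^\pi$ and Boole's inequality, $\{T_i^\pi>n\}\subseteq\bigcup_{j\in\Ni}\{\lambda^{\pi}_{j}(n)<a_{ji}\}$, whence by the displayed lower bound
\[
\Pb_\theta(T_i^\pi>n)\le\SM{j}{\Ni}\Pb_\theta\set{\inf_{\vartheta\in\Gamma_{\delta,\theta}}\wtlambda_{\vartheta,j}(n)<a_{ji}+C_\delta}.
\]
Setting
\[
M_i(\ab,\varepsilon,\delta)=1+\Psi\brc{\max_{j\in\Ni}\frac{a_{ji}+C_\delta}{I_j(\theta)-\varepsilon}}
\]
guarantees that $a_{ji}+C_\delta\le(I_j(\theta)-\varepsilon)\psi(n)$ for all $j\in\Ni$ and all $n\ge M_i(\ab,\varepsilon,\delta)$, so these probabilities are dominated by the summands of $\Upsilon_{\theta,j,\delta,\varepsilon,r}$. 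Applying the moment inequality (Lemma~A.1 in \cite{Tartakovsky_book2020}) with $\tau=T_i^\pi$ and $N=M_i(\ab,\varepsilon,\delta)$ then yields
\[
\Eb_\theta[(T_i^\pi)^r]\le M_i(\ab,\varepsilon,\delta)^r+r\,2^{r-1}\SM{j}{\Ni}\Upsilon_{\theta,j,\delta,\varepsilon,r}\brc{M_i(\ab,\varepsilon,\delta)}.
\]

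The delicate part is passing to the limit, and it is where the structure of condition~$\Cb2$ must be used with care. Because $\Upsilon_{\theta,j,\delta,\varepsilon,r}(n)$ is \emph{increasing} in $\delta$, the $\limsup_{\delta\to0}$ in \eqref{LTCond} is actually an infimum, so \eqref{LTCond} only controls $G(n):=\inf_{\delta>0}\max_{i,j}\sup_{\theta_i}\Upsilon_{\theta_i,j,\delta,\varepsilon,r}(n)\to0$; one cannot simply fix $\delta$ and let $n\to\infty$. The remedy exploits that each $\Upsilon_{\theta,j,\delta,\varepsilon,r}(n)$ is a \emph{non-increasing tail} in $n$: given $\eta>0$, choose $n_\eta$ with $G(n_\eta)<\eta$ and then a single fixed $\delta_\eta>0$ with $\max_{i,j}\sup_{\theta_i}\Upsilon_{\theta_i,j,\delta_\eta,\varepsilon,r}(n_\eta)<\eta$; monotonicity in $n$ then forces $\Upsilon_{\theta,j,\delta_\eta,\varepsilon,r}(n)<\eta$ for \emph{all} $n\ge n_\eta$, in particular at $n=M_i(\ab,\varepsilon,\delta_\eta)$ once $\alpha_{\max}$ is small enough that $M_i\ge n_\eta$. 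Thus the $\Upsilon$ contribution stays bounded by a fixed multiple of $\eta$, while for this fixed $\delta_\eta$ the constant $C_{\delta_\eta}$ is harmless and \eqref{CondPsi} gives $M_i(\ab,\varepsilon,\delta_\eta)\sim\Psi(\max_{j}a_{ji}/(I_j(\theta)-\varepsilon))\to\infty$. Dividing by $[\Psi(\max_j a_{ji}/(I_j(\theta)-\varepsilon))]^r$, the bounded additive term vanishes against $M_i^r\to\infty$, so $\Eb_\theta[(T_i^\pi)^r]\le[\Psi(\max_j a_{ji}/(I_j(\theta)-\varepsilon))]^r(1+o(1))$; letting $\varepsilon\to0$ and invoking \eqref{CondPsi} once more recovers $[F_{i,\theta}(\alphab)]^r$, which proves \eqref{UBmomentscomposite1} under $a_{ji}\sim|\log\alpha_{ji}|$. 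For $\theta\in\Iin$ the same reasoning applies with $I_j(\theta)>0$ from the second line of \eqref{Ipositive} and \eqref{LTCond2} in place of \eqref{LTCond}; since $T_*^\pi\le T_i^\pi$ for \emph{every} $i\in\Nc_0$, bounding each $\Eb_\theta[(T_i^\pi)^r]$ by $[F_{i,\theta}(\alphab)]^r(1+o(1))$ and minimizing over $i$ gives \eqref{UBmomentscomposite2}.
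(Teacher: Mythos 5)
Your proof is correct and follows essentially the same route as the paper's: the same reduction $\Eb_\theta[(T_*^\pi)^r]\le\Eb_\theta[(T_i^\pi)^r]$, the same mixture lower bound $\lambda^{\pi}_{j}(n)\ge\inf_{\vartheta\in\Gamma_{\delta,\theta}}\wtlambda_{\vartheta,j}(n)-|\log\pi(\Gamma_{\delta,\theta})|$ followed by Boole's inequality, the same appeal to Lemma~A.1 of \cite{Tartakovsky_book2020}, and the same $\varepsilon\to0$ passage via \eqref{CondPsi}, with only a cosmetic difference in that you absorb $C_\delta$ into the definition of $M_i$ whereas the paper handles it through an $\varepsilon/2$ device requiring $|\log\pi(\Gamma_{\delta,\theta})|/\psi(n)<\varepsilon/2$. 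Your extraction of a single fixed $\delta_\eta$ from the iterated limit in condition $\Cb2$, exploiting monotonicity of $\Upsilon_{\theta,j,\delta,\varepsilon,r}(n)$ in both $\delta$ and $n$, is in fact a more careful justification of the step where the paper simply keeps $\delta$ fixed and invokes \eqref{LTCond} to kill the remainder term.
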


\begin{proof}
By the definition of the Markov time $T_i^\pi$, we have 
\begin{align*}
\Pb_{\theta}(T_i^\pi > n)  & = \Pb_{\theta}\set{\max_{1 \le t \le n} \min_{j \in \Nc_0\setminus i} \brcs{\lambda_{j}^\pi(t) -a_{ji}} <0}
 \\
 & \le \sum_{j \in \Nc_0\setminus i} \Pb_{\theta}\set{\lambda_{j}^\pi(n) < a_{ji}} 
 \\
 & = \sum_{j \in \Nc_0\setminus i} \Pb_{\theta}\set{\log g_{n}(\Xb^n_1) - \log \hat{g}_{j,n}(\Xb^n_1)  < a_{ji}} .
\end{align*}
Since 
\[
\log g_{n}(\Xb^n_1)  \ge \log   \int_{\Gamma_{\delta,\theta}} p_{\vartheta,n}(\Xb^n_1) \pi(\D \vartheta) \ge  \inf_{\vartheta\in \Gamma_{\delta,\theta}} \log p_{\vartheta,n}(\Xb^n_1) + 
\log \pi(\Gamma_{\delta,\theta})
\]
it follows that
\begin{equation}\label{ProbTiupper}
\Pb_{\theta}(T_i^\pi > n)  \le \sum_{j \in \Nc_0\setminus i} \Pb_{\theta}\set{\frac{\inf_{\vartheta\in \Gamma_{\delta,\theta}}\widetilde{\lambda}_{\vartheta, j}(n)}{\psi(n)} < 
\frac{a_{ji} -\log \pi(\Gamma_{\delta,\theta})}{\psi(n)}}.
\end{equation}

Let $\ab=(a_{ij})$ denote the matrix of finite thresholds $a_{ij}$, $i,j=0,1,\dots, N$ ($a_{ii}$ are immaterial) of the \mbox{MMSPRT} and let
\begin{equation}\label{Mitheta}
M_{i,\theta}(\ab,\varepsilon) = 1 + \Psi\brc{\max_{j\in \Ni} \frac{a_{ji}}{I_{j}(\theta)-\varepsilon}}  \quad \text{for}~ \theta\in \Theta.
\end{equation}
It is easily seen that for $n \ge M_{i,\theta}(\ab,\varepsilon)$  the right-hand side in \eqref{ProbTiupper} does not exceed the sum of probabilities  
\[
 \sum_{j \in \Nc_0\setminus i} \Pb_{\theta}\set{\frac{1}{\psi(n)} \inf_{\vartheta\in \Gamma_{\delta,\theta}}\widetilde{\lambda}_{\vartheta, j}(n) <  I_j(\theta) - \varepsilon
+ \frac{1}{\psi(n)} |\log \pi(\Gamma_{\delta,\theta})|}.
\]
Therefore, for all sufficiently large $n$ and $a_{\min}$, for which $|\log \pi(\Gamma_{\delta,\theta})|/\psi(n) < \varepsilon/2$, we obtain
\begin{equation}\label{ineqprobbiggern}
\Pb_{\theta}(T_i^\pi > n) \le \sum_{j \in \Nc_0\setminus i} \Pb_{\theta}\set{\frac{1}{\psi(n)} \inf_{\vartheta\in \Gamma_{\delta,\theta}}\widetilde{\lambda}_{\vartheta, j}(n) <  
I_j(\theta) - \varepsilon/2} .
\end{equation}

Noting that by the definition of the stopping time $T_*^\pi$ in \eqref{MMSPRT} $\Eb_{\theta}[(T_*^\pi)^r] \le \Eb_{\theta}[(T_i^\pi)^r]$ for all $i\in \Nc_0$ and setting 
$\tau=T_i^\pi$ and $N=M_{i,\theta}(\ab,\varepsilon/2)$ in Lemma~A.1 (Appendix A, page 239) in Tartakovsky~\cite{Tartakovsky_book2020}, we obtain the inequalities
\begin{equation}\label{ExpTi}
\Eb_{\theta}[(T_*^\pi)^r] \le \Eb_{\theta}[(T_i^\pi)^r] \le M_{i,\theta}^r + r 2^{r-1} \sum_{n=M_{i,\theta}}^\infty n^{r-1} \Pb_{\theta} \brc{T_i^\pi > n},
\end{equation}
which hold for all $i=0,1,\dots,N$.

Substituting \eqref{ineqprobbiggern} in \eqref{ExpTi} gives
\begin{align} \label{Expthetai}
& \Eb_{\theta}[(T_*^\pi)^r]  \le \Eb_{\theta}[(T_i^\pi)^r] \le   M_{i,\theta}(\ab,\varepsilon/2)^r \nonumber
\\
& \quad + r 2^{r-1} N \max_{j \in \Nc_0\setminus i} \sum_{n=M_{i,\theta}(\ab,\varepsilon/2)}^\infty n^{r-1} \nonumber
  \Pb_{\theta}\set{\inf_{\vartheta\in \Gamma_{\delta,\theta}}\widetilde{\lambda}_{\vartheta, j}(n) < ( I_j(\theta) - \varepsilon/2)\psi(n)} \nonumber
  \\
  & \qquad =  M_{i,\theta}(\ab,\varepsilon/2)^r  + r 2^{r-1} N \max_{j \in \Nc_0\setminus i} \Upsilon_{\theta, j, \delta, \varepsilon, r}(M_{i,\theta}(\ab,\varepsilon/2)) .
\end{align}

Consider the case where $\theta=\theta_i \in \Theta_i$ ($i=0,1,\dots,N$).  Inequality \eqref{Expthetai} implies that for all $i=0,1,\dots,N$
\[
 \Eb_{\theta_i}[(T_*^\pi)^r]  \le M_{i,\theta_i}(\ab,\varepsilon/2)^r  + 
   r 2^{r-1} N  \max_{j \in \Ni} \sup_{\theta_i\in\Theta_i} \Upsilon_{\theta_i, j, \delta, \varepsilon/2, r}(M_{i,\theta_i}(\ab,\varepsilon/2)) .
\]
Since $M_{i,\theta_i}(\ab,\varepsilon/2)\to \infty$ as $a_{\min} \to \infty$, by the left-tail condition \eqref{LTCond}, the second term goes to $0$, so that  for all $i=0,1,\dots,N$
\[
\Eb_{\theta_i}[(T_*^\pi)^r]  \le  M_{i,\theta_i}(\ab,\varepsilon/2)^r +o(1) ~~ \text{as}~ a_{\min} \to \infty.
\]
Since $\varepsilon$ can be arbitrarily  small, taking the limit $\varepsilon \to 0$ and noticing that due to condition \eqref{CondPsi}
\[
\lim_{\varepsilon\to0} \lim_{a_{\min}\to0} [M_{i,\theta_i}(\ab,\varepsilon/2)/M_{i,\theta_i}(\ab,0)]=1,
\] 
we obtain the following asymptotic upper bound
\begin{equation}\label{ExpTmsprtAineqa}
\Eb_{\theta_i}[(T_*^\pi)^r]\le   \brcs{\Psi\brc{\max_{j\in\Nc_0\setminus i} \frac{a_{ji}}{I_{j}(\theta_i)}}}^r (1+o(1)) \quad \text{as}~ a_{\min}\to \infty,
\end{equation}
which holds for all $\theta_i\in\Theta_i$ and all $i=0,1,\dots,N$. 

Setting $a_{ji} = |\log \alpha_{ji}|$ in \eqref{ExpTmsprtAineqa} or, more generally,  $a_{ji} \sim |\log \alpha_{ji}|$ (assuming 
$\sup_{\theta_i\in\Theta_i}\alpha_{ij}(\Dsf_*^\pi, \theta_i) \le \alpha_{ij}$), we obtain the  
inequalities \eqref{UBmomentscomposite1} for all $\theta\in \Theta_i$ and all $i=0,1,\dots,N$.

It remains to prove the inequality \eqref{UBmomentscomposite2} for the indifference zone $\Theta_{\rm in}$.  By inequality \eqref{Expthetai}, for
$\theta\in \Theta_{\rm in}$ and any $i \in \Nc_0$ we have the inequalities
\[
 \Eb_{\theta}[(T_i^\pi)^r] \le M_{i,\theta}(\ab,\varepsilon/2)^r  + r 2^{r-1} N \max_{j \in \Nc_0\setminus i} \Upsilon_{\theta, j, \delta, \varepsilon/2, r}(M_{i,\theta}(\ab,\varepsilon/2)) .
\] 
Now,  note that  $\Eb_\theta[(T_*^\pi)^r] \le \min_{i \in \Nc_0} \Eb_{\theta}[T_i^r]$, and therefore,
\begin{align*}
 \Eb_{\theta}[(T_*^\pi)^r] & \le \min_{i \in \Nc_0} M_{i,\theta}(\ab,\varepsilon/2)^r  
 \\
 \quad &+ 
 r 2^{r-1} N \min_{i\in \Nc_0} \max_{j \in \Nc_0\setminus i} \sup_{\theta \in \Theta_{\rm in}}\Upsilon_{\theta, j, \delta, \varepsilon/2, r}(M_{i,\theta}(\ab,\varepsilon/2)),
\end{align*}
where, by the left-tail condition \eqref{LTCond2}, the second term goes to $0$. Hence, 
\[
 \Eb_{\theta}[(T_*^\pi)^r]  \le  \min_{i\in \Nc_0} M_{i,\theta}(\ab,\varepsilon/2)^r +o(1)~~ \text{as}~ a_{\min} \to \infty.
\]
Since $\varepsilon$ can be arbitrarily  small, taking the limit $\varepsilon \to 0$ and noticing that due to condition \eqref{CondPsi}
\[
\lim_{\varepsilon\to0} \lim_{a_{\min}\to0} \brcs{\min_{i\in \Nc_0} M_{i,\theta}(\ab,\varepsilon/2)/\min_{i\in \Nc_0}M_{i,\theta}(\ab,0)}=1,
\] 
 we obtain the asymptotic upper bound for $\theta \in \Theta_{\rm in}$
\begin{equation}\label{ExpTmsprtAineqa2}
\Eb_{\theta}[T_*^r] \le   \brcs{\Psi\brc{\min_{i \in \Nc_0}\max_{j\in\Nc_0\setminus i} \frac{a_{ji}}{I_{j}(\theta)}}}^r (1+o(1)) \quad \text{as}~ a_{\min}\to \infty.
\end{equation}
Setting $a_{ji} = |\log \alpha_{ji}|$ in \eqref{ExpTmsprtAineqa2} or, more generally,  $a_{ji} \sim |\log \alpha_{ji}|$ (assuming 
$\sup_{\theta_i\in\Theta_i}\alpha_{ij}(\Dsf_*^\pi, \theta_i) \le \alpha_{ij}$), we obtain the  inequality \eqref{UBmomentscomposite2} for $\theta\in \Theta_{\rm in}$.
\end{proof}

Theorems \ref{Th:LBcomposite} and \ref{Th:UBcomposite} give the following first-order asymptotic optimality result.

\begin{thm} \label{Th:AOMMSPRTcomposite}
Assume that there exist an increasing function $\psi(t)$, $\psi(\infty) = \infty$, satisfying condition \eqref{CondPsi},  and a function $I(\theta,\vartheta)$, satisfying \eqref{Ipositive}, 
such that the SLLN for the LLR \eqref{LLRasComposite}
holds, i.e., the normalized LLR  $\lambda_{\theta,\vartheta}(n)/\psi(n)$ converges almost surely to $I(\theta,\vartheta)$ under $\Pb_\theta$ as $n \to \infty$. Assume, 
in addition,  that for some $r \ge 1$ left-tail condition $\Cb 2$ holds. If the thresholds in the \mbox{MMSPRT} are so selected that 
$\sup_{\theta \in \Theta_i}\alpha_{ij}(\Dsf_*^\pi, \theta) \le \alpha_{ij}$ and 
$a_{ji} \sim \log \alpha_{ji}^{-1}$ as $\alpha_{\max} \to 0$, in particular as $a_{ji} = \log \alpha_{ji}^{-1}$, then as $\alpha_{\max} \to 0$
\begin{align}
\inf_{\Dsf \in \class(\alphab)} \Eb_{\theta}[T^r]  & \sim \brcs{F_{i,\theta}(\alphab)}^r \sim \Eb_\theta[(T_*^\pi)^r]  ~~ \text{for all}~ \theta\in\Theta_i ~ \text{and}~  i \in \Nc_0;
\label{AOmomentscomposite1}
\\
\inf_{\Dsf \in \class(\alphab)} \Eb_{\theta}[T^r]  & \sim  \brcs{\min_{0 \le i \le N} F_{i,\theta}(\alphab)}^r \sim \Eb_\theta[(T_*^\pi)^r]  ~~ \text{for all}~ \theta\in\Theta_{\rm in} .
\label{AOmomentscomposite2}
\end{align}
\end{thm}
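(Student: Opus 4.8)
The plan is to sandwich $\Eb_\theta[(T_*^\pi)^r]$ between the asymptotic lower bounds of Theorem~\ref{Th:LBcomposite} and the asymptotic upper bounds of Theorem~\ref{Th:UBcomposite}, using the fact that the MMSPRT itself belongs to the class $\class(\alphab)$. Since both of those theorems already express their bounds through the single functional $F_{i,\theta}(\alphab)$ (respectively $\min_{i\in\Nc_0} F_{i,\theta}(\alphab)$ on the indifference zone), once the hypotheses of both theorems are verified the two chains \eqref{AOmomentscomposite1}--\eqref{AOmomentscomposite2} follow with no further estimation.

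First I would verify the hypotheses for the lower bound. Theorem~\ref{Th:LBcomposite} requires the right-tail condition $\Cb1$, i.e.\ \eqref{RTCond}. By Remark~\ref{Rem:AS}, the assumed almost sure convergence \eqref{LLRasComposite} of $\lambda_{\theta,\vartheta}(n)/\psi(n)$ to $I(\theta,\vartheta)$ implies \eqref{RTCond}, so Theorem~\ref{Th:LBcomposite} applies and yields the bounds \eqref{LBmomentscomposite1} on each $\Theta_i$ and \eqref{LBmomentscomposite2} on $\Theta_{\rm in}$ for \emph{every} test in $\class(\alphab)$. By Lemma~\ref{Lem:PEMMSPRT}, the threshold choice $a_{ij}\sim\log\alpha_{ij}^{-1}$ (which ensures $\sup_{\theta\in\Theta_i}\alpha_{ij}(\Dsf_*^\pi,\theta)\le\alpha_{ij}$) places the MMSPRT in $\class(\alphab)$, so these lower bounds transfer directly to $\Eb_\theta[(T_*^\pi)^r]$:
\[
\Eb_\theta[(T_*^\pi)^r]\ge \brcs{F_{i,\theta}(\alphab)}^r(1+o(1)) \quad \text{on } \Theta_i,
\]
and the analogue with $\min_{i\in\Nc_0} F_{i,\theta}(\alphab)$ on $\Theta_{\rm in}$.

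The reverse inequalities are exactly the content of Theorem~\ref{Th:UBcomposite}: under the assumed left-tail condition $\Cb2$ and the same threshold choice, \eqref{UBmomentscomposite1}--\eqref{UBmomentscomposite2} give
\[
\Eb_\theta[(T_*^\pi)^r]\le \brcs{F_{i,\theta}(\alphab)}^r(1+o(1)) \quad \text{on } \Theta_i,
\]
and likewise with $\min_{i\in\Nc_0} F_{i,\theta}(\alphab)$ on $\Theta_{\rm in}$. Combining the two directions on each region yields the asserted $\sim$ chains, with the middle term $\brcs{F_{i,\theta}(\alphab)}^r$ (resp.\ $\brcs{\min_{i}F_{i,\theta}(\alphab)}^r$) equal to both $\inf_{\Dsf\in\class(\alphab)}\Eb_\theta[T^r]$ and $\Eb_\theta[(T_*^\pi)^r]$ up to $(1+o(1))$.

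The genuine content has already been discharged inside the two source theorems, so there is no serious obstacle remaining; the only point I would double-check is the structural asymmetry that makes the sandwich tight. The lower bound is driven by the \emph{right} tail of the normalized LLR, whereas the upper bound is driven by its \emph{left} tail through the infimum over the shrinking neighbourhood $\Gamma_{\delta,\theta}$ in $\Cb2$, yet both collapse onto the same information functional $F_{i,\theta}(\alphab)=\Psi\brc{\max_{j}|\log\alpha_{ji}|/I_j(\theta)}$. The matching relies on the threshold asymptotics $a_{ji}\sim|\log\alpha_{ji}|$ together with the regularity condition \eqref{CondPsi} on $\Psi$, which together force $\Psi\brc{\max_j a_{ji}/I_j(\theta)}\sim F_{i,\theta}(\alphab)$; confirming this coincidence is all that is needed to close the argument.
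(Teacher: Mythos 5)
Your proposal is correct and takes essentially the same route as the paper's own proof: the paper likewise observes that the SLLN \eqref{LLRasComposite} implies the right-tail condition \eqref{RTCond}, applies Theorem~\ref{Th:LBcomposite} to the MMSPRT (which lies in $\class(\alphab)$ under the stated threshold choice), and combines the resulting lower bounds with the reverse inequalities \eqref{UBmomentscomposite1}--\eqref{UBmomentscomposite2} of Theorem~\ref{Th:UBcomposite}. Your explicit citations of Remark~\ref{Rem:AS} and Lemma~\ref{Lem:PEMMSPRT} simply make visible two steps the paper leaves implicit.
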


\begin{proof}
Since the SLLN \eqref{LLRasComposite} implies the right-tail condition \eqref{RTCond} we can use  Theorem~\ref{Th:LBcomposite}  to obtain the asymptotic lower bounds (as $\alpha_{\max}\to0$)
\begin{align*}
\Eb_\theta[(T_*^\pi)^r] & \ge \brcs{\Psi\brc{ \max_{j\in \Ni} \frac{|\log \alpha_{ji}|}{I_{j}(\theta)} }}^r (1+o(1)) ~~ \text{for all}~ \theta\in \Theta_i, i \in \Nc_0;
\\
\Eb_\theta[(T_*^\pi)^r] & \ge \brcs{\Psi\brc{\min_{i \in \Nc_0} \max_{j\in \Ni} \frac{|\log \alpha_{ji}|}{I_{j}(\theta)} }}^r (1+o(1)) ~~ \text{for all}~ \theta\in \Theta_{\rm in}.
\end{align*}
These inequalities along with the reverse inequalities \eqref{UBmomentscomposite1} and \eqref{UBmomentscomposite2} in Theorem~\ref{Th:UBcomposite} yield the asymptotic approximations
\begin{align*}
\Eb_\theta[(T_*^\pi)^r] & = \brcs{\Psi\brc{ \max_{j\in \Ni}  \frac{|\log \alpha_{ji}|}{I_{j}(\theta)} }}^r (1+o(1)) ~~ \text{for all}~ \theta\in \Theta_i, i \in \Nc_0;
\\
\Eb_\theta[(T_*^\pi)^r] & = \brcs{\Psi\brc{ \min_{i \in \Nc_0}\max_{j\in \Ni} \frac{|\log \alpha_{ji}|}{I_{j}(\theta)} }}^r (1+o(1)) ~~ \text{for all}~ \theta\in \Theta_{\rm in},
\end{align*}
so the MMSPRT attains the best lower bounds \eqref{LBmomentscomposite1} and \eqref{LBmomentscomposite2} in class $\class(\alphab)$.  This completes the proof. 
\end{proof}

\begin{remark}\label{Re:MMSPRTthresholds}
 It follows from inequalities \eqref{ExpTmsprtAineqa} and \eqref{ExpTmsprtAineqa2} and the fact that the \mbox{MMSPRT} belongs to 
 $\class(\alphab)$ with $\alpha_{ij} = \exp\{-a_{ij}\}$  that the following approximations (as  $a_{\min} \to \infty$)  for moments of the sample sizes of the 
 \mbox{MMSPRT} as functions of thresholds $\ab=(a_{ij})$ hold regardless of the probabilities of errors
\begin{align*}
\Eb_\theta[(T_*^\pi)^r] & = \brcs{\Psi\brc{ \max_{j\in \Ni}  \frac{a_{ji}}{I_{j}(\theta)} }}^r (1+o(1)) ~~ \text{for all}~ \theta\in \Theta_i, ~ i \in \Nc_0;
\\
\Eb_\theta[(T_*^\pi)^r] & = \brcs{\Psi\brc{ \min_{i \in \Nc_0}\max_{j\in \Ni} \frac{a_{ji}}{I_{j}(\theta)} }}^r (1+o(1)) ~~ \text{for all}~ \theta\in \Theta_{\rm in}.
\end{align*}
These approximations may be useful in various problem settings.
\end{remark}

\begin{remark}
An alternative proof for Theorem~\ref{Th:AOMMSPRTcomposite} is to use $r$-complete convergence of the normalized to $\psi(n)$ decision statistics $\lambda_{ij}^\pi(n)$ 
defined in \eqref{lambdapi} to $I_j(\theta)=\inf_{\vartheta \in \Theta_j} I(\theta,\vartheta)$ as $n\to \infty$ under hypotheses $\Hyp_i$ and 
to $\max_{j \in \Nc_0} I_j(\theta)$ for $\theta\in \Theta_{\rm in}$. That is, replacing condition $\Cb2$ by the condition, 
\[
\lim _{n\to\infty} \sum_{t=n}^\infty n^{r-1} \Pb_\theta \set{\abs{\frac{1}{\psi(n)} \lambda_{ij}^\pi(n) -I_j(\theta)} > \varepsilon} ~~ \text{for}~ \theta\in \Theta_i ~ \text{and all}~
\varepsilon >0,
\]
 where $I_j(\theta)$ is replaced by $\max_{j\in\Nc_0} I_j(\theta)$ for $\theta\in \Theta_{\rm in}$. This latter ``direct'' sufficient condition for 
 asymptotic optimality can be verified in certain interesting examples. 
\end{remark}

\begin{remark}
The proposed \mbox{MMSPRT} can be modified by replacing the statistics $\Lambda_i^\pi$ defined in \eqref{Lambdapi} with the statistics 
\[
\Lambda^{\pi}_{ji}(n) =  \frac{\int_{\Theta_j} \prod_{t=1}^n f_{\theta,t}(X_t | \Xb^{t-1}_1) \pi(\D \theta)}{\sup_{\theta\in\Theta_i}\prod_{t=1}^n f_{\theta,t}(X_t | \Xb^{t-1}_1)} , 
~~ i,j =0,1,\dots,N, ~ i\neq j.
\]
This alternative version of the \mbox{MMSPRT} is also uniformly asymptotically optimal to first order for all hypotheses and parameter values $\theta_i\in\Theta_i$, $i=0,1,\dots,N$.
\end{remark}

\begin{remark}
The above results are also satisfied for improper prior distributions as long as $\Theta$ is a compact set. This is important, for example, in invariant testing problems. 
See Section~\ref{ss:Ex3}. 
\end{remark}

\subsection{Adaptive matrix sequential probability ratio test} \label{ss:AMSPRT}

Let $\hat{\theta}_n=\hat{\theta}_n(\Xb^n_1)$ be an ($\Fc_n$-measurable) estimator of~$\theta$. If in conditional density~$f_{\theta,t}(X_t|\Xb^{t-1}_1)$ for the $t$-th observation 
we replace the parameter with the estimator~$\hat{\theta}_{t-1}(\Xb^{t-1}_1)$ built upon the sample $\Xb^{t-1}_1$ of size $t-1$ that includes $t-1$ observations,
then $f_{\hat\theta_{t-1},t}(X_t|\Xb^{t-1}_1)$ and $\prod_{t=1}^n f_{\hat\theta_{t-1}, t}(X_t|\Xb^{t-1}_1)$
are still viable probability densities in contrast to the popular generalized likelihood ratio approach when maximization over $\theta$
is performed in the density $p_\theta(\Xb^n_1)$ for the whole sample $\Xb^n_1$ containing all $n$ available observations, so that $p_{\hat\theta_n}(\Xb^n_1)$ is
not a probability density anymore.\footnote{In the latter case, the $\hat\theta_n$ is the maximum likelihood estimator.} 

For $n\ge 1$, introduce the adaptive LR statistic
\begin{equation*}
\hLa_\vartheta(n) = \prod_{t=1}^n\frac{f_{\hat\theta_{t-1},t}(X_t |\Xb^{t-1}_1)}{f_{\vartheta, t}(X_t|\Xb^{t-1}_1)} , ~~ \hLa_\vartheta(0)=1.
\end{equation*}
It satisfies the recursion
\begin{equation}\label{ALRrec}
\hLa_\vartheta(n) =\hLa_\vartheta(n-1) \times \frac{f_{\hat\theta_{n-1}, n}(X_n|\Xb^{n-1})}{f_{\vartheta, n}(X_n | \Xb^{n-1})}, ~~ n \ge 1, ~ \hLa_\vartheta(0)=1
\end{equation}
with $f_{\hat\theta_{0},1}(X_1|\Xb^0)=p_{\theta_0}(X_1)$, where the initial value of the estimator $\hat\theta_0=\theta_0$ is a design parameter. 
Since  
\[
\Eb_{\vartheta} [\hLa_\vartheta(n)| \Xb^{n-1}] =\hLa_\vartheta(n-1), ~~ n \ge 2, ~~  \Eb_{\vartheta} [\hLa_\vartheta(1)] =1
\] 
the adaptive LR $(\hLa_\vartheta(n), \Fc_n)_{n\geq 1}$ is a $\Pb_{\vartheta}$-martingale with unit expectation. This important property allows us to deduce simple 
upper bounds for error probabilities of the adaptive multi-hypothesis sequential test, introduced below, using the Wald-Doob likelihood ratio identity, 
as we will see in the next subsection. 

Introduce the statistics 
\begin{equation}\label{ALRmulti1}
\hLa_i^*(n)  =  \frac{\prod_{t=1}^n f_{\hat\theta_{t-1}, t}(X_t|\Xb^{t-1}_1)}{\sup_{\theta\in \Theta_i} \prod_{t=1}^n f_{\theta, t}(X_t|\Xb^{t-1}_1)} , \quad i=0,1,\dots,N ,
\end{equation}
and based on these statistics for a $(N+1)\times (N+1)$  matrix $(A_{ij})_{i,j\in\Nc_0}$ of boundaries, with $A_{ij} > 0$ and the $A_{ii}$ are immaterial, define 
the Adaptive Matrix SPRT (\mbox{AMSPRT}) $\widehat\Dsf = (\hT, \hat{d})$ as follows: Stop at the first $n \ge 1$ such that, for some $i\in\Nc_0$, $\hLa^*_{j}(n) \ge A_{ji}$ for all $j \neq i$
and accept the unique~$\Hyp_i$ that satisfies these inequalities.  Let $\hla^*_i(n) = \log \hLa^*_i(n)$.
Setting $a_{ji}= \log A_{ji}$ and introducing the Markov accepting times for the hypotheses~$\Hyp_i$ as
\begin{equation} \label{hatTicomposite}
\begin{aligned}
\hT_i  & =\inf\set{ n \ge 1: \hla_{i}^*(n)\ge a_{ji} ~ \text{for all}~ j \in \Nc_0 \setminus i } 
\\
& =\inf\set{ n \ge 1: \min_{j \in \Nc_0\setminus i}\brcs{\hla_{i}^*(n) -  a_{ji}} \ge 0},   \quad i =0,1,\dots,N,
\end{aligned}
\end{equation}
the \mbox{AMSPRT} $\widehat\Dsf = (\hT, \hat{d})$  can be written as
\begin{equation} \label{AMSPRT}
\hT=\min_{k \in \Nc_0} \hT_k, \qquad \hat{d}=i \quad \mbox{if} \quad \hT= \hT_i.
\end{equation}

Because of the simple recursive structure of the adaptive LR \eqref{ALRrec}, the \mbox{AMSPRT} is a very attractive alternative to the \mbox{MMSPRT}.  
Robbins and Siegmund \cite{RobbinsSiegmund-Berkeley70,RobbinsSiegmund-AS74} were the first who suggested the idea of using the adaptive LR in the context of 
the so-called power $1$ tests for i.i.d.\ models.

\subsection{Probabilities of errors of the \mbox{AMSPRT}} \label{ss:PEAMSPRT}

The following lemma provides simple upper bounds for the error probabilities $\alpha_{ij}(\widehat\Dsf, \theta)=\Pb_{\theta}(\hat{d}=j)$, $\theta\in \Theta_i$ of the 
\mbox{AMSPRT} \eqref{hatTicomposite}-\eqref{AMSPRT}. 

\begin{lemma}\label{Lem:PEAMSPRT}
The following upper bounds on the error probabilities of the \mbox{AMSPRT}  hold:
\begin{equation*} 
\sup_{\theta\in\Theta_i} \alpha_{ij}(\widehat{\Dsf}, \theta)  \le \exp\set{- a_{ij}}  \quad \text{for} ~ i,j =0,1,\dots,N, ~ i \neq  j.
\end{equation*}
Therefore, if  $a_{ij}  =  \log(1/\alpha_{ij})$ then  $\widehat{\Dsf} \in \class(\alphab)$ . 
\end{lemma}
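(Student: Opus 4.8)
The plan is to mirror the proof of Lemma~\ref{Lem:PEMMSPRT} for the \mbox{MMSPRT}, replacing the genuine mixture likelihood ratio by the adaptive likelihood ratio martingale $\hLa_\theta(n)$ introduced in Section~\ref{ss:AMSPRT}. First I would record the event inclusion coming from the stopping rule \eqref{hatTicomposite}--\eqref{AMSPRT}: the event $\set{\hat{d}=j}=\set{\hT=\hT_j}$ is contained in $\set{\hT_j<\infty}$, and on it all component statistics that trigger acceptance of $\Hyp_j$ have crossed their boundaries, so in particular $\hLa_i^*(\hT_j)\ge A_{ij}=e^{a_{ij}}$. This yields, for every $\theta\in\Theta_i$,
\[
\alpha_{ij}(\widehat\Dsf,\theta)=\Eb_\theta\brcs{\Ind{\hat{d}=j}}\le \Eb_\theta\brcs{\Ind{\hT_j<\infty}}\le e^{-a_{ij}}\,\Eb_\theta\brcs{\Ind{\hT_j<\infty}\,\hLa_i^*(\hT_j)},
\]
exactly as in the opening display of the proof of Lemma~\ref{Lem:PEMMSPRT}.

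Next I would pass from the adaptive statistic $\hLa_i^*$ in \eqref{ALRmulti1} to the adaptive likelihood ratio $\hLa_\vartheta$. Since the denominator of $\hLa_i^*(n)$ is a supremum over $\theta\in\Theta_i$, for each fixed $\vartheta\in\Theta_i$ the value at $\vartheta$ cannot exceed that supremum, whence $\hLa_i^*(n)\le \hLa_\vartheta(n)$ for all $n$ and all $\vartheta\in\Theta_i$. Choosing $\vartheta=\theta$ (legitimate because $\theta\in\Theta_i$) gives $\hLa_i^*(n)\le \hLa_\theta(n)$, and therefore
\[
\Eb_\theta\brcs{\Ind{\hT_j<\infty}\,\hLa_i^*(\hT_j)}\le \Eb_\theta\brcs{\Ind{\hT_j<\infty}\,\hLa_\theta(\hT_j)}.
\]

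The last step bounds the right-hand side by $1$ via the martingale property established in Section~\ref{ss:AMSPRT}, namely that $(\hLa_\theta(n),\Fc_n)_{n\ge1}$ is a nonnegative $\Pb_\theta$-martingale with unit expectation. Here lies the only genuine subtlety: unlike the \mbox{MMSPRT}, the statistic $\hLa_\theta$ is not a likelihood ratio between two fixed measures, so Wald's identity does not give the exact equality $\Eb_\theta\brcs{\cdots}=\Pb_\vartheta(\hT_j<\infty)$ used for the \mbox{MMSPRT}; it is replaced by an inequality. I would obtain it by applying optional sampling to the bounded time $\hT_j\wedge n$, giving $\Eb_\theta\brcs{\hLa_\theta(\hT_j\wedge n)}=1$, then using nonnegativity to write $\Eb_\theta\brcs{\Ind{\hT_j\le n}\,\hLa_\theta(\hT_j)}\le1$, and finally letting $n\to\infty$ by monotone convergence to conclude $\Eb_\theta\brcs{\Ind{\hT_j<\infty}\,\hLa_\theta(\hT_j)}\le1$. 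Chaining the three displays gives $\alpha_{ij}(\widehat\Dsf,\theta)\le e^{-a_{ij}}$ for every $\theta\in\Theta_i$; since this bound is independent of $\theta$, taking $\sup_{\theta\in\Theta_i}$ yields the stated inequality, and the inclusion $\widehat\Dsf\in\class(\alphab)$ follows on setting $a_{ij}=\log(1/\alpha_{ij})$.
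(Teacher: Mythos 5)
Your proposal is correct and follows essentially the same route as the paper's own proof: the same event inclusion and threshold bound giving $\alpha_{ij}(\widehat\Dsf,\theta)\le e^{-a_{ij}}\,\Eb_\theta\brcs{\Ind{\hT_j<\infty}\,\hLa_i^*(\hT_j)}$, the same pointwise domination $\hLa_i^*(n)\le \hLa_\theta(n)$ for $\theta\in\Theta_i$, and the same appeal to the unit-mean $\Pb_\theta$-martingale property of $\hLa_\theta$ to bound the remaining expectation by $1$. The only (harmless) divergence is the last step: where you use optional sampling at $\hT_j\wedge n$ plus monotone convergence, the paper notes that $\hLa_\theta(n)=\D\widehat{\Pb}_\theta^{n}/\D\Pb_\theta^{n}$ for a genuine probability measure $\widehat{\Pb}_\theta$ (plugged-in conditional densities are still densities) and applies Wald's likelihood ratio identity to get $\Eb_\theta\brcs{\Ind{\hT_j<\infty}\,\hLa_\theta(\hT_j)}=\widehat{\Pb}_\theta(\hT_j<\infty)\le 1$, so your caveat that Wald's identity is unavailable here is unnecessary, though your substitute argument is equally valid.
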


\begin{proof}
 Observe that the event $\{\hat{d}=j\}=\{\hT=\hT_j\}$ implies the event $\{\hT_j < \infty\}$ and, by the definition of the Markov time $\hT_j$, 
 $\hla_{j}^*(\hT_j)   \ge a_{ij}$ on $\{\hT_j < \infty\}$. So, for all $\theta\in\Theta_i$ ($i\neq j$), we obtain
 \begin{align*}
\alpha_{ij}(\widehat\Dsf, \theta) &= \Eb_{\theta} \brcs{\Ind{\hat{d} =j}} \le \Eb_{\theta} \brcs{\Ind{\hT_j < \infty}}
\\
& = \Eb_{\theta} \brcs{\Ind{\hT_j < \infty} \hLa_{j}^*(\hT_j)\exp\set{- \hla_{j}^*(\hT_j)} } 
\\
&\le \exp\set{-a_{ij}} \, \Eb_{\theta} \brcs{\Ind{\hT_j < \infty} \hLa_{j}^*(\hT_j)} .
\end{align*}
Since $\hLa_{j}^*(n) \le \widehat\Lambda_{\theta}(n)$ for any $n\ge 1$ and all~$\theta\in\Theta_i$, $i\neq j$  
we obtain that for all $\theta\in\Theta_i$ and $i=0,1,\dots, N$
 \begin{align*}
\alpha_{ij}(\widehat\Dsf, \theta)  \le \exp\set{-a_{ij}} \, \Eb_{\theta} \brcs{\Ind{\hT_j < \infty}\widehat\Lambda_{\theta}(\hT_j)} ,
\end{align*}
where, as established in the previous section, the adaptive LR $(\hLa_\theta(n), \Fc_n)_{n\geq 1}$ is the $\Pb_{\theta}$-martingale with expectation 
$\Eb_\theta[\hLa_\theta(n)]=1$. Thus, the adaptive LR $\{\hLa_\theta(n)\}_{n\ge 1}$ is a viable likelihood ratio process, i.e.,   for any $\theta\in \Theta$, 
there exists a probability measure $\widehat{\Pb}_\theta$ such that
\[
\hLa_\theta(n) = \frac{\D \widehat{\Pb}_\theta^{n}}{\D \Pb_\theta^n}, ~~ n \ge 1.
\]
Applying Wald's likelihood ratio identity yields
\[
\Eb_{\theta} \brcs{\Ind{\hT < \infty}\widehat\Lambda_{\theta}(\hT_j)}  = \widehat{\Pb}_\theta(\hT_j < \infty),
\]
and hence, 
\[
\sup_{\theta \in \Theta_i} \alpha_{ij}(\widehat\Dsf, \theta)  \le \exp\set{-a_{ij}} \, \sup_{\theta\in\Theta_i} \widehat{\Pb}_\theta(\hT_j < \infty) \le \exp\set{-a_{ij}}  ,
\]
which gives the lemma. 
\end{proof}

 \subsection{First-order uniform asymptotic optimality of the \mbox{AMSPRT}}\label{ss:AOAMSPRT}

Recall that $I_j(\theta)=\inf_{\vartheta\in \Theta_j} I(\theta,\vartheta)$. For $\theta\in\Theta$, define
\[
\widehat\Upsilon_{\theta, j, \varepsilon, r}(n)=\sum_{t=n}^\infty t^{r-1} \Pb_{\theta} \set{\hla^*_{ j}(t) 
<(I_{j}(\theta)-\varepsilon) \psi(t)} .
\]

To obtain asymptotic upper bounds for moments of the stopping time distribution of the \mbox{AMSPRT} we will use the following left-tail condition.

\vspace{3mm}

\noindent $\Cb 3$. \textit{Adaptive Left-tail Condition}. There exists a positive continuous function $I(\theta,\vartheta)$, satisfying condition \eqref{Ipositive},
such that  for any $\varepsilon >0$ and some $r \ge 1$
\begin{equation}\label{LTCondA}
\lim_{n\to \infty} \max_{\stackrel{i,j \in \Nc_0}{ j \neq i}} \sup_{\theta_i\in\Theta_i}\, \widehat\Upsilon_{\theta_i, j, \varepsilon, r}(n) =0 
\end{equation}
and
\begin{equation}\label{LTCond2A}
\lim_{n\to \infty}  \max_{j \in \Nc_0} \sup_{\theta\in\Theta_{\rm in}}\, \widehat\Upsilon_{\theta, j, \varepsilon, r}(n) =0 .
\end{equation}

The following theorem provides the asymptotic upper bounds for moments of the \mbox{AMSPRT} stopping time distribution which together with the lower 
bounds \eqref{LBmomentscomposite1} and \eqref{LBmomentscomposite2}  imply asymptotic optimality of the \mbox{AMSPRT} in class $\class(\alphab)$.

\begin{thm} \label{Th:UBcompositeAdaptive}
Assume that there exist an increasing function $\psi(t)$, $\psi(\infty) = \infty$, satisfying condition \eqref{CondPsi}, and a function $I(\theta,\vartheta)$, satisfying \eqref{Ipositive}, 
such that for some $r \ge 1$ left-tail 
condition $\Cb3$ holds. If the thresholds in \mbox{AMSPRT} are so selected that $\sup_{\theta \in \Theta_i}\alpha_{ij}(\widehat\Dsf, \theta) \le \alpha_{ij}$ and 
$a_{ji} \sim \log \alpha_{ji}^{-1}$ as $\alpha_{\max} \to 0$, in particular as $a_{ji} = \log \alpha_{ji}^{-1}$, then as $\alpha_{\max} \to 0$
\begin{align}
\Eb_\theta[\hT^r] &  \le \brcs{F_{i,\theta}(\alphab)}^r (1+o(1)) ~~ \text{for all}~ \theta\in\Theta_i ~ \text{and}~  i \in \Nc_0;
\label{UBmomentscomposite1A}
\\
\Eb_\theta[\hT^r]  & \le \brcs{\min_{0 \le i \le N} F_{i,\theta}(\alphab)}^r (1+o(1))~~ \text{for all}~ \theta\in\Theta_{\rm in} .
\label{UBmomentscomposite2A}
\end{align}
\end{thm}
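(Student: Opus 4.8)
The plan is to follow the template of the proof of Theorem~\ref{Th:UBcomposite} for the MMSPRT, but to exploit the fact that here the adaptive statistics $\hla_j^*(n)$ enter the stopping rule directly, which removes the need for the mixture--neighborhood estimate and the associated $\delta\to0$ limit. First I would fix $i\in\Nc_0$ and, using the definition of the accepting time $\hT_i$ in \eqref{hatTicomposite} together with Boole's inequality, bound the tail of $\hT_i$ by
\[
\Pb_{\theta}(\hT_i > n) = \Pb_{\theta}\set{\max_{1 \le t \le n} \min_{j \in \Ni}\brcs{\hla_{j}^*(t) -a_{ji}} <0} \le \sum_{j \in \Ni} \Pb_{\theta}\set{\hla_{j}^*(n) < a_{ji}}.
\]
Unlike in the MMSPRT case, no lower bound on a mixture density is required: the decision statistic $\hla_j^*(n)$ already appears on the right, so there is neither a $\log\pi(\Gamma_{\delta,\theta})$ correction term nor an infimum over a neighborhood.

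Next I would reuse the scale $M_{i,\theta}(\ab,\varepsilon)$ from \eqref{Mitheta}. For $n \ge M_{i,\theta}(\ab,\varepsilon)$ one has $a_{ji} \le (I_j(\theta)-\varepsilon)\psi(n)$ for every $j\in\Ni$, since $\psi$ is increasing and $\Psi=\psi^{-1}$, whence
\[
\Pb_{\theta}(\hT_i > n) \le \sum_{j \in \Ni} \Pb_{\theta}\set{\hla_{j}^*(n) < (I_j(\theta)-\varepsilon)\psi(n)}
\]
for all sufficiently large $n$. Invoking Lemma~A.1 in \cite{Tartakovsky_book2020} with $\tau=\hT_i$ and lower summation index $M_{i,\theta}(\ab,\varepsilon)$, and using $\Eb_\theta[\hT^r]\le \Eb_\theta[\hT_i^r]$, I would obtain
\[
\Eb_{\theta}[\hT^r] \le M_{i,\theta}(\ab,\varepsilon)^r + r2^{r-1}N \max_{j\in\Ni}\widehat\Upsilon_{\theta,j,\varepsilon,r}\brc{M_{i,\theta}(\ab,\varepsilon)},
\]
which is precisely where condition $\Cb3$ is designed to act.

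For $\theta=\theta_i\in\Theta_i$, since $M_{i,\theta_i}(\ab,\varepsilon)\to\infty$ as $a_{\min}\to\infty$, the left-tail condition \eqref{LTCondA} forces the second term to vanish, leaving $\Eb_{\theta_i}[\hT^r]\le M_{i,\theta_i}(\ab,\varepsilon)^r+o(1)$. Letting $\varepsilon\to0$ and using \eqref{CondPsi} to pass from the $\varepsilon$-perturbed scale to $M_{i,\theta_i}(\ab,0)$ gives the threshold-form bound $\Eb_{\theta_i}[\hT^r]\le \brcs{\Psi\brc{\max_{j\in\Ni}a_{ji}/I_j(\theta_i)}}^r(1+o(1))$; substituting $a_{ji}\sim|\log\alpha_{ji}|$ yields \eqref{UBmomentscomposite1A}. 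For the indifference zone I would instead start from $\Eb_\theta[\hT^r]\le\min_{i\in\Nc_0}\Eb_\theta[\hT_i^r]$, apply the same bound inside the minimum, and use \eqref{LTCond2A} to kill the residual terms, obtaining \eqref{UBmomentscomposite2A} after the $\varepsilon\to0$ passage.

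I expect no serious obstacle, since the argument is a streamlined version of Theorem~\ref{Th:UBcomposite}: the genuine work has been pushed into verifying condition $\Cb3$, namely the $r$-complete left-tail control of $\hla_j^*(n)/\psi(n)$ at $I_j(\theta)$. The one point demanding care is that the stopping rule \eqref{hatTicomposite} must read $\hla_j^*(n)\ge a_{ji}$ for all $j\in\Ni$ (the statistic indexed by the competing hypothesis $j$, not by the accepted $i$), so that the Boole bound above produces exactly the events governed by $\widehat\Upsilon_{\theta,j,\varepsilon,r}$ in condition $\Cb3$.
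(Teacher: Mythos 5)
Your proposal is correct and takes essentially the same route as the paper's own proof: the Boole-inequality tail bound for $\hT_i$, Lemma~A.1 of \cite{Tartakovsky_book2020} truncated at $M_{i,\theta}(\ab,\varepsilon)$, condition $\Cb3$ to annihilate the residual series $\widehat\Upsilon_{\theta,j,\varepsilon,r}(M_{i,\theta}(\ab,\varepsilon))$, the $\varepsilon\to0$ passage via \eqref{CondPsi}, and $\Eb_\theta[\hT^r]\le\min_{i\in\Nc_0}\Eb_\theta[\hT_i^r]$ for $\theta\in\Theta_{\rm in}$. You also correctly read the stopping rule \eqref{hatTicomposite} as requiring $\hla_j^*(n)\ge a_{ji}$ for all $j\in\Ni$ (the subscript $i$ there is a typo), which is exactly the interpretation the paper's proof adopts when it passes to the events $\{\hla_j^*(n)<(I_j(\theta)-\varepsilon)\psi(n)\}$ governed by condition $\Cb3$.
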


\begin{proof}
By the definition of the Markov time $\hT_i$, we have 
\begin{align*}
\Pb_{\theta}(\hT_i > n)  & = \Pb_{\theta}\set{\max_{1 \le t \le n} \min_{j \in \Nc_0\setminus i}\brcs{\hla_{i}^*(t) -  a_{ji}} < 0} 
 \\
 & \le \Pb_{\theta}\set{\min_{j \in \Nc_0\setminus i}\brcs{\hla_{i}^*(n) -  a_{ji}} < 0}
 \\
 & \le \sum_{j \in \Nc_0\setminus i} \Pb_{\theta}\set{\hla_{i}^*(n)/\psi(n) < a_{ji}/\psi(n)} .
\end{align*}
Let $\ab=(a_{ij})$ denote the matrix of finite thresholds $a_{ij}$, $i,j=0,1,\dots, N$ ($a_{ii}$ are immaterial) of \mbox{AMSPRT} and let $M_{i,\theta}(\ab,\varepsilon) $ be as 
in \eqref{Mitheta}. Obviously, for $n \ge M_{i,\theta}(\ab,\varepsilon)$    
\[
\sum_{j \in \Nc_0\setminus i} \Pb_{\theta}\set{\hla_{j}^*(n)/\psi(n) < a_{ji}/\psi(n)} \le  \sum_{j \in \Nc_0\setminus i} \Pb_{\theta}\set{\frac{1}{\psi(n)} \hla_{j}^*(n)
<  I_j(\theta) - \varepsilon},
\]
and hence,  for all sufficiently large $n$ and $a_{\min}$, we obtain the inequality
\begin{equation}\label{ineqprobbiggernA}
\Pb_{\theta}(\hT_i > n) \le \sum_{j \in \Nc_0\setminus i} \Pb_{\theta}\set{\hla_{j}^*(n)<  (I_j(\theta) - \varepsilon) \psi(n)} .
\end{equation}
Similarly to \eqref{ExpTi}, for all $i=0,1,\dots,N$,
\begin{equation}\label{ExpTiA}
\Eb_{\theta}[\hT^r] \le \Eb_{\theta}[\hT_i^r] \le M_{i,\theta}(\ab,\varepsilon)^r + r 2^{r-1} \sum_{n=M_{i,\theta}(\ab,\varepsilon)}^\infty n^{r-1} \Pb_{\theta} (\hT_i > n).
\end{equation}
Using \eqref{ineqprobbiggernA} and \eqref{ExpTiA} yields
\begin{align} \label{ExpthetaiA}
& \Eb_{\theta}[\hT^r]  \le \Eb_{\theta}[\hT_i^r] \le   M_{i,\theta}(\ab,\varepsilon)^r \nonumber
\\
& \quad + r 2^{r-1} N \sum_{n=M_{i,\theta}(\ab,\varepsilon)}^\infty n^{r-1} 
 \max_{j \in \Nc_0\setminus i}  \Pb_{\theta}\set{\hla_{j}^*(n)< ( I_j(\theta) - \varepsilon)\psi(n)} \nonumber
  \\
  & \qquad =  M_{i,\theta}(\ab,\varepsilon)^r  + r 2^{r-1} N \max_{j \in \Nc_0\setminus i} \widehat\Upsilon_{\theta, j, \varepsilon, r}(M_{i,\theta}(\ab,\varepsilon)) .
\end{align}

Let $\theta=\theta_i \in \Theta_i$ ($i=0,1,\dots,N$).  Then inequality \eqref{ExpthetaiA} implies that for all $i=0,1,\dots,N$
\[
 \Eb_{\theta_i}[\hT^r]  \le M_{i,\theta_i}(\ab,\varepsilon)^r  + 
   r 2^{r-1} N  \max_{j \in \Ni} \sup_{\theta_i\in\Theta_i} \widehat\Upsilon_{\theta_i, j, \varepsilon, r}(M_{i,\theta_i}(\ab,\varepsilon)) .
\]
Since $M_{i,\theta_i}(\ab,\varepsilon)\to \infty$ as $a_{\min} \to \infty$, by the left-tail condition \eqref{LTCondA}, the second term goes to $0$, so that  for all $i=0,1,\dots,N$
\[
\Eb_{\theta_i}[\hT^r]  \le  M_{i,\theta_i}(\ab,\varepsilon)^r +o(1) ~~ \text{as}~ a_{\min} \to \infty.
\]
Since $\varepsilon$ can be arbitrarily  small, taking the limit $\varepsilon \to 0$ and using condition \eqref{CondPsi}
we obtain the following asymptotic upper bound
\begin{equation}\label{ExpTmsprtAineqaA}
\Eb_{\theta_i}[\hT^r]\le   \brcs{\Psi\brc{\max_{j\in\Nc_0\setminus i} \frac{a_{ji}}{I_{j}(\theta_i)}}}^r (1+o(1)) \quad \text{as}~ a_{\min}\to \infty,
\end{equation}
which holds for all $\theta_i\in\Theta_i$ and all $i=0,1,\dots,N$. 

Setting $a_{ji} = |\log \alpha_{ij}|$ in \eqref{ExpTmsprtAineqaA} or, more generally,  $a_{ji} \sim |\log \alpha_{ij}|$ (assuming 
$\sup_{\theta\in\Theta_i}\alpha_{ij}(\widehat\Dsf,\theta) \le \alpha_{ij}$), we obtain the  
inequalities \eqref{UBmomentscomposite1A} for all $\theta\in \Theta_i$ and all $i=0,1,\dots,N$.

Now, let $\theta\in \Theta_{\rm in}$.  Then, by inequality \eqref{ExpthetaiA}, for any $i \in \Nc_0$ 
\[
 \Eb_{\theta}[\hT_i^r] \le M_{i,\theta}(\ab,\varepsilon)^r  + r 2^{r-1} N \max_{j \in \Nc_0\setminus i} \Upsilon_{\theta, j, \varepsilon, r}(M_{i,\theta}(\ab,\varepsilon)) .
\] 
Now,  note that  $\Eb_\theta[\hT^r] \le \min_{i \in \Nc_0} \Eb_{\theta}[\hT_i^r]$, and therefore,
\begin{align*}
 \Eb_{\theta}[\hT^r]  \le \min_{i \in \Nc_0} M_{i,\theta}(\ab,\varepsilon)^r  
 + 
 r 2^{r-1} N \min_{i\in \Nc_0} \max_{j \in \Nc_0\setminus i} \sup_{\theta \in \Theta_{\rm in}}\Upsilon_{\theta, j, \varepsilon, r}(M_{i,\theta}(\ab,\varepsilon)),
\end{align*}
where, by the left-tail condition \eqref{LTCond2A}, the second term goes to $0$. Hence, 
\[
 \Eb_{\theta}[\hT^r]  \le  \min_{i\in \Nc_0} M_{i,\theta}(\ab,\varepsilon)^r +o(1) ~~ \text{as}~ a_{\min} \to \infty.
\]
Since $\varepsilon$ can be arbitrarily  small, taking the limit $\varepsilon \to 0$ and using condition \eqref{CondPsi}
 we obtain the asymptotic upper bound for $\theta \in \Theta_{\rm in}$
\begin{equation*}
\Eb_{\theta}[\hT^r] \le   \brcs{\Psi\brc{\min_{i \in \Nc_0}\max_{j\in\Nc_0\setminus i} \frac{a_{ji}}{I_{j}(\theta)}}}^r (1+o(1)) \quad \text{as}~ a_{\min}\to \infty.
\end{equation*}
Setting $a_{ji} = |\log \alpha_{ij}|$ in this inequality or, more generally,  $a_{ji} \sim |\log \alpha_{ij}|$ (assuming 
$\sup_{\theta\in\Theta_i}\alpha_{ij}(\widehat\Dsf,\theta) \le \alpha_{ij}$) gives
inequality \eqref{UBmomentscomposite2A} for $\theta\in \Theta_{\rm in}$. 
\end{proof}

Theorems \ref{Th:LBcomposite} and \ref{Th:UBcomposite} give the following first-order asymptotic optimality result. 

\begin{thm} \label{Th:AOAMSPRTcomposite}
Assume that there exist an increasing function $\psi(t)$, $\psi(\infty) = \infty$, satisfying condition \eqref{CondPsi},  and a function $I(\theta,\vartheta)$, satisfying \eqref{Ipositive}, 
such that the SLLN for the LLR \eqref{LLRasComposite}
holds, i.e., the normalized LLR  $\lambda_{\theta,\vartheta}(n)/\psi(n)$ converges almost surely to $I(\theta,\vartheta)$ under $\Pb_\theta$ as $n \to \infty$. 
Assume, in addition,  that for some $r \ge 1$ left-tail condition $\Cb 3$ holds. If the thresholds in the \mbox{AMSPRT} are so selected that 
$\sup_{\theta \in \Theta_i}\alpha_{ij}(\widehat\Dsf, \theta) \le \alpha_{ij}$ and 
$a_{ji} \sim \log \alpha_{ji}^{-1}$ as $\alpha_{\max} \to 0$, in particular as $a_{ji} = \log \alpha_{ji}^{-1}$, then as $\alpha_{\max} \to 0$
\begin{align}
\inf_{\Dsf \in \class(\alphab)} \Eb_{\theta}[T^r]  & \sim \brcs{F_{i,\theta}(\alphab)}^r \sim \Eb_\theta[\hT^r]  ~~ \text{for all}~ \theta\in\Theta_i ~ \text{and}~  i \in \Nc_0;
\label{AOmomentscomposite1A}
\\
\inf_{\Dsf \in \class(\alphab)} \Eb_{\theta}[T^r]  & \sim  \brcs{\min_{i \in \Nc_0} F_{i,\theta}(\alphab)}^r \sim \Eb_\theta[\hT^r]  ~~ \text{for all}~ \theta\in\Theta_{\rm in} .
\label{AOmomentscomposite2A}
\end{align}
\end{thm}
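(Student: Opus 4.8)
The plan is to obtain the two-sided asymptotic equalities by sandwiching $\Eb_\theta[\hT^r]$ between a lower bound that holds for \emph{every} test in $\class(\alphab)$ and a matching upper bound specific to the \mbox{AMSPRT}. The argument parallels that of Theorem~\ref{Th:AOMMSPRTcomposite}, with the adaptive upper-bound Theorem~\ref{Th:UBcompositeAdaptive} playing the role that Theorem~\ref{Th:UBcomposite} played there.

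First I would establish the lower bound. By Remark~\ref{Rem:AS}, the almost sure convergence \eqref{LLRasComposite} implies the right-tail condition \eqref{RTCond}, so all hypotheses of Theorem~\ref{Th:LBcomposite} are satisfied. Hence, as $\alpha_{\max}\to 0$, for every $r\ge 1$,
\begin{align*}
\inf_{\Dsf\in \class(\alphab)} \Eb_{\theta}[T^r] & \ge \brcs{F_{i,\theta}(\alphab)}^r (1+o(1)) \quad \text{for } \theta\in\Theta_i,\ i\in\Nc_0;
\\
\inf_{\Dsf\in \class(\alphab)} \Eb_{\theta}[T^r] & \ge \brcs{\min_{i\in\Nc_0} F_{i,\theta}(\alphab)}^r (1+o(1)) \quad \text{for } \theta\in\Theta_{\rm in}.
\end{align*}

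Next I would invoke the matching upper bound for the \mbox{AMSPRT}. By Lemma~\ref{Lem:PEAMSPRT}, the choice $a_{ij}=\log\alpha_{ij}^{-1}$ (or more generally $a_{ji}\sim\log\alpha_{ji}^{-1}$ with $\sup_{\theta\in\Theta_i}\alpha_{ij}(\widehat\Dsf,\theta)\le\alpha_{ij}$) guarantees $\widehat\Dsf\in\class(\alphab)$. Under the adaptive left-tail condition $\Cb3$, Theorem~\ref{Th:UBcompositeAdaptive} then yields $\Eb_\theta[\hT^r]\le\brcs{F_{i,\theta}(\alphab)}^r(1+o(1))$ for $\theta\in\Theta_i$, and the analogous bound with $\min_{i\in\Nc_0}F_{i,\theta}(\alphab)$ for $\theta\in\Theta_{\rm in}$.

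Finally I would combine the two. Because $\widehat\Dsf\in\class(\alphab)$, we have $\inf_{\Dsf\in\class(\alphab)}\Eb_\theta[T^r]\le\Eb_\theta[\hT^r]$, so for $\theta\in\Theta_i$ the chain
\[
\brcs{F_{i,\theta}(\alphab)}^r(1+o(1)) \le \inf_{\Dsf\in\class(\alphab)}\Eb_\theta[T^r] \le \Eb_\theta[\hT^r] \le \brcs{F_{i,\theta}(\alphab)}^r(1+o(1))
\]
forces all three quantities to be asymptotically equivalent to $[F_{i,\theta}(\alphab)]^r$, which is \eqref{AOmomentscomposite1A}; the indifference-zone case \eqref{AOmomentscomposite2A} follows identically with $\min_{i\in\Nc_0}F_{i,\theta}(\alphab)$ in place of $F_{i,\theta}(\alphab)$. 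The substantive estimates already reside in the two component theorems, so no new analytic difficulty arises here. The only step requiring care is verifying that the hypotheses align properly—that the SLLN \eqref{LLRasComposite} delivers, via Remark~\ref{Rem:AS}, the right-tail condition driving the lower bound while $\Cb3$ drives the adaptive upper bound, and that the threshold calibration simultaneously secures membership $\widehat\Dsf\in\class(\alphab)$. That bookkeeping, rather than any fresh inequality, is the main obstacle.
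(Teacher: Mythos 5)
Your proposal is correct and follows exactly the route the paper intends: the paper omits the proof of Theorem~\ref{Th:AOAMSPRTcomposite}, stating it is similar to that of Theorem~\ref{Th:AOMMSPRTcomposite}, which is precisely your sandwich of the lower bound from Theorem~\ref{Th:LBcomposite} (via the SLLN \eqref{LLRasComposite} implying the right-tail condition \eqref{RTCond} through Remark~\ref{Rem:AS}) against the adaptive upper bound of Theorem~\ref{Th:UBcompositeAdaptive}, with membership $\widehat\Dsf\in\class(\alphab)$ secured by Lemma~\ref{Lem:PEAMSPRT} or by hypothesis. Your bookkeeping of which condition drives which bound matches the paper's argument, so nothing further is needed.
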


The proof is elementary, similar to the proof of Theorem~\ref{Th:AOMMSPRTcomposite}, and is omitted.

\begin{remark}\label{Re:AMSPRTthresholds}
The same reasoning as in Remark~\ref{Re:MMSPRTthresholds} gives the following asymptotic approximations (as  $a_{\min} \to \infty$)  for moments of the sample sizes of the 
 \mbox{AMSPRT} as functions of thresholds $\ab=(a_{ij})$ regardless of the error probabilities
\begin{align*}
\Eb_\theta[\hT^r] & \sim \brcs{\Psi\brc{ \max_{j\in \Ni}  \frac{a_{ji}}{I_{j}(\theta)} }}^r ~~ \text{for all}~ \theta\in \Theta_i, ~ i \in \Nc_0;
\\
\Eb_\theta[\hT^r]  & \sim \brcs{\Psi\brc{ \min_{i \in \Nc_0}\max_{j\in \Ni} \frac{a_{ji}}{I_{j}(\theta)} }}^r ~~ \text{for all}~ \theta\in \Theta_{\rm in}.
\end{align*}
\end{remark}

\section{Examples} \label{s:Examples}

In this section, we consider several examples that are useful for certain practical applications. The substantially non-stationary model for observations in the first example 
turns out to be
adequate for sequential detection of epidemics, as discussed in \cite{LiangTarVeerIEEEIT2023,Tartakovsky&Spivak-Matrix2024} in the context of quickest change-point detection.

\subsection{Example 1: Testing for the mean of normal autoregressive non-stationary process} \label{ss:Ex1}

This example has many applications. In particular, (a) in sensor systems such as radars, acoustic systems, and electro-optic imaging systems where it is required to detect signals with 
unknown intensities from objects in clutter and sensor noise (see, e.g., \cite{Tartakovsky&Brown-IEEEAES08,TNB_book2014}) and (b) in the detection of epidemics, e.g., Covid-19 
(see, e.g., \cite{LiangTarVeerIEEEIT2023,Tartakovsky&Spivak-Matrix2024}).

Observations are of the  form
\begin{equation}\label{MeanARmodel}
X_{n}=\theta \, S_{n}  +\xi_{n},\quad n \ge 1,
\end{equation}
where $S_{n}$ is a deterministic function (e.g., a signal) observed in additive noise $\xi_{n}$ and $\theta \in \Theta=(-\infty, +\infty)$ is an unknown parameter. 
In many applications, noise $\{\xi_{n}\}_{n \ge 1}$ can be adequately modeled by the $p$-th order Gaussian autoregressive process 
AR$(p)$ that satisfies the recursion
\begin{equation}\label{sec:Ex.1}
\xi_{n} = \sum_{t=1}^p \rho_{t} \xi_{n-t} + w_{n}, \quad n \ge 1, 
\end{equation}
where   $\{w_{n}\}_{n\ge 1}$ is an i.i.d.\ normal $\cN(0,\sigma^2)$ sequence ($\sigma>0$). For simplicity, let us set zero initial conditions $\xi_{1-p}=\xi_{2-p}=\cdots=\xi_{0}=0$. 
The coefficients $\rho_{1},\dots,\rho_{p}$ and the variance $\sigma^2$ are known and all roots of the equation $z^p -\rho_{1} z^{p-1} - \cdots - \rho_{p}=0$ 
are in the interior of the unit circle, so that the AR($p$) process is stable.  

For $n \ge1$, define the $p_n$-th order residuals 
\[
\widetilde{S}_{n} = S_{n}- \sum_{t=1}^{p_n} \rho_{t} S_{n-t}, \quad  \widetilde{X}_{n} = X_{n}- \sum_{t=1}^{p_n} \rho_{t} X_{n-t},
\]
where $p_n =p$ if $n > p$ and $p_n =n$ if $1 \le n \le p$.  The conditional density has the form
\begin{align*}
f_{\theta, n}(X_{n} |\Xb^{n-1})&=   \frac{1}{\sqrt{2\pi \sigma^2}} \exp\set{-\frac{(\wtX_{n}-\theta \wtS_{n})^2}{2\sigma^2}},
\end{align*}
and therefore, the LLR 
\begin{equation}\label{LLRAR}
\lambda_{\theta, \vartheta}(n) = \frac{\theta-\vartheta}{\sigma^2}  \sum_{t=1}^{n} \wtS_{t} \wtX_{t} -
\frac{\theta^2-\vartheta^2}{2 \sigma^2} \sum_{t=1}^{n} \wtS_{t}^2 .
\end{equation}
Since under measure $\Pb_{\theta} $ the random variables $\{\wtX_{n}\}_{n\ge 1}$ are independent normal random variables 
$\cN(\theta \wtS_{n},\sigma^2)$, the LLR $\{\lambda_{\theta, \vartheta}(n)\}_{n\ge 1}$ is a $\Pb_\theta$-Gaussian process 
(with independent but non-identically distributed increments) with mean and variance
\begin{equation*}
\Eb_{\theta} [\lambda_{\theta, \vartheta}(n)]   = \frac{1}{2}  \Var_{\theta} [\lambda_{\theta, \vartheta}(n)] =\frac{(\theta-\vartheta)^2}{2\sigma^2}  \sum_{t=1}^{n} \wtS_{t}^2  , ~~
\end{equation*}

Assume that 
\begin{equation}\label{Energy}
\frac{1}{\sigma^2} \lim_{n\to \infty} \frac{1}{\psi(n)}   \sum_{t=1}^{n} \wtS_{t}^2 = Q^2 ,
\end{equation}
where  $0<Q^2<\infty$. In a variety of signal processing applications, this condition holds with $\psi(n)=n$, e.g., in radar applications where the 
signal $S_{n}$ is the sequence of harmonic pulses, in which case $\theta^2 Q^2$ is the so-called signal-to-noise ratio (SNR). In some applications such as detection, 
recognition, and tracking of objects on ballistic
trajectories that can be approximated by polynomials of order $m=2-3$, the function $\psi(n)=n^m$, $m >1$. 
Under condition \eqref{Energy} 
\[
\frac{1}{\psi(n)}\lambda_{\theta, \vartheta}(n) \xra[n\to\infty]{ \Pb_{\theta} -\text{a.s.}} \frac{1}{2} (\theta-\vartheta)^2 Q^2 =I(\theta, \vartheta) ~~ \text{for all}~  \theta\in (-\infty,+\infty),
\]
so that the SLLN \eqref{LLRasComposite} and the right-tail condition $\Cb1$ hold. 

Furthermore, $\lambda_{\theta, \vartheta}(n)/\psi(n)\to I(\theta,\vartheta)$ $r$-completely for all $r\ge 1$.
Indeed, under~$\Pb_\theta$, the ``whitened'' observations can be written as $\wtX_n=\theta\wtS_n + w_n$ and the LLR as
\[
\lambda_{\theta,\vartheta}(n) =(\theta-\vartheta) W_n +  \frac{(\theta-\vartheta)^2}{2\sigma^2} \sum_{t=1}^n \wtS^2_t,
\]
where $W_n=\sigma^{-2} \sum_{t=1}^n \wtS_t w_t$ is a weighted sum of i.i.d.\ normal $\cN(0,\sigma^2)$ random variables $w_t$. Let
\begin{equation}\label{etanormalization}
\eta_{n}=\frac{W_{n}}{\sqrt{\sigma^{-2}\sum_{t=1}^{n} \wtS_{t}^2}} ~~ \text{and}  ~~ b_n(\varepsilon)=
\frac{\varepsilon \psi(n)}{\sqrt{\sigma^{-2}\sum_{t=1}^{n} \wtS_{t}^2}}.
\end{equation}
Note that $\eta_n$ is a standard normal $\cN(0,1)$ random variable and that
$$
\Pb\set{\abs{W_n} >\varepsilon \psi(n)}= \Pb\set{\abs{\eta_n} >b_n(\varepsilon)},
$$
and consequently, for arbitrary $b_n(\varepsilon) >1$
\[
\Pb\set{\abs{W_n} >\varepsilon \psi(n)}=\sqrt{\frac{2}{\pi}}\int^{\infty}_{b_n(\varepsilon)}\,\exp\set{-\frac{u^{2}}{2}}\, \D u \le \exp\set{-\frac{b_n(\varepsilon)^{2}}{2}}.
\]

It follows from assumption \eqref{Energy} 
 that $b_n^2(\varepsilon) \sim \varepsilon^2 \psi(n)/Q^2$ as $n\to\infty$, and therefore, for sufficiently large $n$
$$
\Pb\set{\abs{W_n} >\varepsilon \psi(n)} \le O\brc{\exp\set{-\varepsilon^2 \psi(n)/2 Q^2}}.
$$
Recall that, by assumption \eqref{CondPsi1},  $\lim_{n\to\infty} [\psi(n)/\log n] = \infty$, which along with the previous inequality implies that 
$$
\lim_{n\to\infty}n^{m}\Pb\set{\abs{W_n} >\varepsilon \psi(n)}=0 ~~ \text{for all}~ m>0.
$$
Hence,
\begin{equation} \label{finitesum}
\sum_{n=1}^{\infty} n^{r-1} \Pb\set{\abs{W_n} >\varepsilon \psi(n)} < \infty \quad \text{for all} ~ \varepsilon > 0 ~ \text{and all} ~ r\ge 1 .
\end{equation}
In other words, $W_n/\psi(n)$ converges to~$0$ $r$-completely, which implies that for all $\varepsilon >0$ and all $r \ge 1$
\begin{equation} \label{LLRrcomp}
\sum_{n=1}^{\infty} n^{r-1} \Pb_\theta\set{\abs{\lambda_{\theta,\vartheta}(n) - I(\theta,\vartheta)} >\varepsilon \psi(n)} < \infty ,
\end{equation}
where $I(\theta,\vartheta)= (\theta-\vartheta)^2 Q^2/2$.

If we are interested in testing simple hypotheses $\Hyp_i: \theta=\theta_i$, $i=0,1,\dots,N$, then inequality \eqref{LLRrcomp} implies $r$-complete convergence condition
\eqref{rcompleteLLR} for the LLRs $\lambda_{ij}(n) = \lambda_{\theta_i, \theta_j}(n)$ with $I_{ij} = (\theta_i-\theta_j)^2 Q^2/2$, $i,j\in \Nc_0$, $i \neq j$. Hence, 
by Theorem~\ref{Th:AOMSPRTsimple}, the MSPRT $\Dsf_*=(T_*,d_*)$ is asymptotically optimal, minimizing all positive moments of the sample size to first-order,
and asymptotic approximations \eqref{AOmomentssimple} hold for all $\theta_i$, $i=0,1,\dots,N$ as long as thresholds $a_{ij}$ in the MSPRT are so selected 
that $\alpha_{ij}(\Dsf_*) \le \alpha_{ij}$ and $a_{ji} \sim \log \alpha_{ji}^{-1}$ as $\alpha_{\max} \to 0$.

Next, consider composite hypotheses, and for simplicity, let us focus on two hypotheses $\Hyp_0: \theta \le \theta_0$ and $\Hyp_1: \theta \ge \theta_1$ ($\theta_0 < \theta_1$) 
with the indifference interval $\Theta_{\rm in}=(\theta_0,\theta_1)$ when $N=1$. Generalization for multiple hypotheses is straightforward, but the argument is more cumbersome.
The case of two hypotheses is of special interest in object detection and epidemics detection applications. In the case of two hypotheses, 
the \mbox{MMSPRT} will be referred to as the \mbox{M-2-SPRT}.

To establish the optimality of the \mbox{M-2-SPRT} we need to show that the left-tail $r$-complete convergence condition $\Cb2$ holds. This follows from the argument analogous to 
that used for establishing $r$-complete convergence \eqref{LLRrcomp}. The details are omitted.
 Thus, by Theorem~\ref{Th:AOMMSPRTcomposite}, the \mbox{M-2-SPRT}  $\Dsf_*^\pi$ minimizes as $\alpha_{\max}\to0$ all positive moments of the sample size and
asymptotic formulas \eqref{AOmomentscomposite1} and  \eqref{AOmomentscomposite2} hold for $i,j=0,1$ with 
\begin{equation}\label{q10AR}
\begin{aligned}
I_1(\theta) & = \inf_{\vartheta \ge \theta_1} I(\theta,\vartheta) =  \frac{(\theta_1-\theta)^2 Q^2}{2} \quad \text{for}~ \theta < \theta_1,
\\
I_0(\theta) & = \inf_{\vartheta \le \theta_0} I(\theta,\vartheta) =  \frac{(\theta-\theta_0)^2 Q^2}{2} \quad \text{for}~ \theta > \theta_0 .
\end{aligned}
\end{equation}
Note that in the case of two hypotheses $\alphab=(\alpha_0, \alpha_1)$, where $\alpha_0= \alpha_{01}$, $\alpha_1=\alpha_{10}$, and asymptotic formulas \eqref{AOmomentscomposite1} and  \eqref{AOmomentscomposite2} yield
\begin{equation}\label{AO2MSLRT}
\begin{aligned}
& \inf_{\Dsf\in \class(\alpha_0,\alpha_1)} \Eb_\theta [T^r] \sim  \Eb_\theta [(T_*^\pi)^r] 
\\
\sim &
\begin{cases}
[\Psi\brc{|\log \alpha_0|/I_0(\theta)}]^r & \text{for}~~\theta \ge \theta_1
\\
[\Psi\brc{|\log \alpha_1|/I_1(\theta)}]^r & \text{for}~~\theta \le \theta_0
\\
[\Psi\brc{\min_{i=0,1} |\log \alpha_i|/I_i(\theta)}]^r &  \text{for}~\theta\in (\theta_0,\theta_1)
\end{cases}.
\end{aligned}
\end{equation} 

To prove the asymptotic optimality of the \mbox{AMSPRT} we need to verify condition $\Cb 3$. In the case of two hypotheses, the \mbox{AMSPRT} will be referred to as the 
\mbox{A-2-SPRT}.

Let
\[
\hat\theta_n=\frac{\sum_{t=1}^n \wtS_t \wtX_t}{\sum_{t=1}^n \wtS_t^2} 
\]
be the unconditional MLE of $\theta$ and 
let $\hat\theta_{n,1} = \max(\theta_1,\hat\theta_n)$ and $\hat\theta_{n,0} = \min(\theta_0,\hat\theta_n)$ be MLEs restricted to the sets $\Theta_1=[\theta_1,\infty)$ and 
$\Theta_0=(-\infty,\theta_0]$, respectively. Then the statistics~$\hla_{i}^*(n)$ can be written as
\[
\hla_{i}^*(n) = \frac{1}{\sigma^2} \sum_{t=1}^n (\hat\theta_{t-1} - \hat\theta_{n,i}) \wtS_t \wtX_t  -  \frac{1}{2\sigma^2} \sum_{t=1}^n (\hat\theta_{t-1}^2 - 
\hat\theta_{n,i}^2) \wtS^2_t, \quad i=0, 1 .
\]
In analogy with the argument that has led to \eqref{finitesum}, it can be shown that $r$-completely under~$\Pb_\theta$
\begin{equation}\label{rcompthetahatetc}
\begin{aligned}
\hat\theta_n  & \to \theta, \quad \hat\theta_{n,1} \to \max(\theta_1, \theta), \quad \hat\theta_{n,0} \to \min(\theta_0, \theta),
\\
\hat\theta_n^2  & \to \theta^2, \quad \hat\theta_{n,1}^2 \to \max(\theta_1^2, \theta^2), \quad \hat\theta_{n,0}^2 \to \min(\theta_0^2, \theta^2), 
\\
\frac{1}{\psi(n)} \sum_{t=1}^n \hat\theta_{t-1}^2 \wtS_t^2 & \to \theta^2 \sigma^2 Q^2,  ~~ \frac{1}{\psi(n)} \sum_{t=1}^n \hat\theta_{t-1} \wtS_t \wtX_t \to \theta^2 \sigma^2 Q^2 .
\end{aligned}
\end{equation}
Indeed, we have
\[
\Pb_\theta\set{\abs{\hat\theta_n-\theta} > \varepsilon } = \Pb_\theta\set{\abs{\eta_n} > b_n(\varepsilon)/\psi(n)},
\]
where $\eta_n\sim \cN(0,1)$ and $b_n(\varepsilon)$ are defined in \eqref{etanormalization}.  The same argument that has led to \eqref{finitesum} yields
\[
\sum_{n=1}^\infty n^{r-1} \Pb_\theta\set{\abs{\hat\theta_n-\theta} > \varepsilon \psi(n)} < \infty \quad \text{for all} ~ \varepsilon > 0 ~ \text{and all} ~ r\ge 1,
\]
so 
\begin{equation}\label{rcompthetahat}
\hat\theta_n  \xra[n\to\infty]{\text{$\Pb_\theta$-$r$-completely}} \theta.
\end{equation}
The rest of $r$-complete convergences in \eqref{rcompthetahatetc} are established analogously to \eqref{rcompthetahat}. 
Using \eqref{rcompthetahatetc}, after some manipulations we obtain that for all $r\ge 1$
\begin{equation*}
\begin{aligned}
\frac{1}{\psi(n)} \hla_1^*(n) & \xra[n\to\infty]{\text{$\Pb_\theta$-$r$-completely}} I_1(\theta) ~~ \text{for}~  \theta < \theta_1,
\\
\frac{1}{\psi(n)} \hla_0^*(n) & \xra[n\to\infty]{\text{$\Pb_\theta$-$r$-completely}} I_0(\theta) ~~ \text{for}~  \theta > \theta_0 ,
\end{aligned}
\end{equation*}
where  $I_i(\theta)$'s are given by~\eqref{q10AR}. Hence, condition $\Cb3$ holds.

By Theorem~\ref{Th:AOAMSPRTcomposite}, the \mbox{A-2-SPRT} is asymptotically optimal, minimizing all positive moments of the sample size: 
for all $r \ge1$ as $\alpha_{\max}\to 0$ the asymptotics \eqref{AO2MSLRT} hold with $\hT$. These asymptotic formulas can be also written as
\begin{equation*}
\inf_{\Dsf\in \class(\alphab)} \Eb_\theta [T^r] \sim \Eb_\theta [\hT^r] \sim \begin{cases}
\Psi(2|\log \alpha_1|/[(\theta_1-\theta)^2 Q^2])^r & \text{if}~ \theta \le \theta^*
\\
\Psi(2|\log \alpha_0|/[(\theta-\theta_0)^2 Q^2])^r & \text{if}~ \theta \ge \theta^*,
\end{cases} 
\end{equation*}
where $\theta^* = (\theta_1 \sqrt{c} +\theta_0)/(1+\sqrt{c})$ is the solution of the equation
\[
|\log \alpha_0|/(\theta-\theta_0)^2 = |\log \alpha_1|/(\theta_1-\theta)^2
\]
 and $c\sim(\log\alpha_0)/(\log \alpha_1)$ as $\alpha_{\max}=\max(\alpha_0,\alpha_1) \to 0$ (see  \eqref{Asymcase} in Remark~\ref{Rem:Asymcase}).

In particular, if $S_n=S$, then $\psi(n)=n$, $\Psi(t)=t$, and $Q^2 =(1-\rho_1-\dots - \rho_p)^2 S^2/\sigma^{2}$. In this case, a higher-order approximation to the 
expected sample size $\Eb_\theta[\hT]$ of the A-2-SPRT up to an additive vanishing term $o(1)$ has been obtained in \cite{TarSokBar-IEEETSP2020}.

\subsection{Example 2: Testing for covariance in Gaussian autoregressive models} \label{ss:Ex2}

Consider the problem of testing hypotheses regarding the covariance of the  AR($p$) process $\{X_n\}_{n\ge 1}$ which satisfies the recursion 
\begin{equation}\label{sec:Ex.7}
X_{n} =\sum_{t=1}^{p} \rho_{t}\,X_{n-t}+w_{n} \, , \quad n\ge 1,
\end{equation}
where $\{w_{n}\}_{n\ge 1}$ are i.i.d.\ standard normal $\cN(0,1)$ random variables  and coefficients $\rho_1,\dots,\rho_p$ are unknown. In this case, the parameter $\theta$ is 
$p$-dimensional, $\theta=(\rho_1,\dots,\rho_p)^\top$ where hereafter $\top$ denotes transpose. For $s\ge \ell \ge 1$, write $\Xb_{\ell}^{s}=(X_\ell,\dots,X_s)$.

The conditional density $f_\theta(X_{n}\vert \Xb_1^{n-1})=f_\theta(X_{n}\vert  \Xb_{n-p}^{n-1})$ is 
 \begin{equation*} 
 f_{\theta}(X_{n}\vert \Xb_{n-p}^{n-1})  = \frac{1}{(2\pi)^{p/2}} \,\exp\set{-\dfrac{(\eta_{\theta}(X_n,  \Xb_{n-p}^{n-1}))^{2}}{2}},
\end{equation*}
where  $\eta_{\theta}(y,x)=y-(\theta)^{\top} x$ ($y\in\Rbb$, $x=(x_{1},\ldots,x_{p})\in \Rbb^p$). Thus, for any $\theta\in \Theta = \Rbb^{p}$, the LLR 
$
\lambda_{\theta,\vartheta}(n) = \sum_{t=1}^{n}  \lambda_{\theta,\vartheta}^*(t),
$
where
\begin{align*}
\lambda^*_{\theta,\vartheta}(t)  &=
\log\frac{f_{\theta}(X_t\vert \Xb_{t-p}^{t-1})}{f_{\vartheta}(X_t\vert \Xb_{t-p}^{t-1})}
=
X_t(\theta-\vartheta)^{\top} \Xb_{t-p}^{t-1}+\frac{1}{2} \brcs{(\vartheta^{\top} \Xb_{t-p}^{t-1})^{2}-(\theta^{\top}\Xb_{t-p}^{t-1})^{2}}.
\end{align*}
The process \eqref{sec:Ex.7} is not Markov, but the $p$-dimensional process 
\[
Y_{n}=(X_{n},\ldots,X_{n-p+1})^{\top}\in\Rbb^{p}
\] 
is Markov. 

Next, for any $\theta=(\rho_{1},\ldots,\rho_{p})\in\Rbb^{p}$, define the matrix
\[
L(\theta)= \begin{pmatrix}
\rho_{1} & \rho_2 & \dots & \rho_{p}\\
1 & 0 &  \dots & 0\\
\vdots & \vdots &  \ddots & \vdots \\
0 & 0 &\dots 1&0 
\end{pmatrix}  
\]
and notice that
\begin{equation}\label{Phi}
Y_{n} =  L(\theta) Y_{n-1}+\tilde{w}_{n},  ~~   n \ge 1 ,
\end{equation}
where $\tilde{w}_{n}=(w_{n},0,\ldots,0)^\top \in\Rbb^{p}$.
Obviously,
\[
\Eb[\tilde{w}_{n}\,\tilde{w}^{\top}_{n}] =B=
\begin{pmatrix}
1 & \dots & 0\\
\vdots & \ddots & \vdots\\
0 & \dots & 0
\end{pmatrix} .
\]

Assume that $\theta$ belongs to the set $\Theta_{\rm st}$ for which all eigenvalues $\e_{\ell}(\Lambda)$ of the matrix $L(\theta)$ in modules are less than $1$:
\begin{equation}
\label{set-Theta-i}
\Theta_{\rm st}=\{\theta\in \Rbb^{p}: \max_{1\le \ell \le p}\,\vert\e_{\ell}(L(\theta))\vert<1\}.
\end{equation}
Using \eqref{Phi} it can be shown that in this case the process $\{Y_{n}\}_{n\ge 1}$ is ergodic with stationary 
normal distribution $\cN(0,\F(\theta))$, where 
\[
\F(\theta)=\sum_{n=0}^\infty (\Lambda(\theta))^n B (\Lambda^\top(\theta))^{n}.
\]

Since  $\sup_{t\ge  1}\Eb_\theta\vert X_{t}\vert^{r}<\infty$ for any $r \ge 1$ and $\theta\in \Rbb^p$ it follows that $\sup_{t\ge  1}\Eb_\theta\vert\lambda_{\theta,\vartheta}(t)\vert^{r}<\infty$ 
for any $r \ge 1$ and $\theta\in \Rbb^p$, and therefore,  the SLLN \eqref{LLRasComposite} holds with  $I_{\theta,\vartheta} =  (\theta-\vartheta)^\top \F(\theta)(\theta-\vartheta)/2$ .
Also, using techniques developed in \cite{PergTarSISP2016,Pergametal-JMA2022} it can be shown that the left-tail condition $\Cb2$ is satisfied with
\begin{equation}\label{infIj}
I_j(\theta)=\inf_{\vartheta\in \Theta_j} \frac{1}{2}   (\theta-\vartheta)^\top \F(\theta)(\theta-\vartheta) ~~ \text{for}~ \theta \in \Theta_i ~~ \text{and}~ \theta \in \Theta_{\rm in}.
\end{equation}
 In particular, in the Markov scalar case where $p=1$ and $\theta=\rho_1=\rho$ in \eqref{sec:Ex.7}, we have
\[
I_j(\rho)=\inf_{\rho^*\in \Theta_j}  \frac{(\rho-\rho^*)^{2}}{2(1-\rho^{2})} ~~ \text{for}~ \rho \in \Theta_i ~~ \text{and}~ \rho \in \Theta_{\rm in}. 
\]

By Theorem~\ref{Th:AOMMSPRTcomposite}, the \mbox{MMSPRT}  $\Dsf_*^\pi$ minimizes as $\alpha_{\max}\to0$ all positive moments of the sample size and
asymptotic formulas \eqref{AOmomentscomposite1} and  \eqref{AOmomentscomposite2} hold with $I_j(\theta)$ specified in \eqref{infIj} for any compact subset 
of $\Theta=\Theta_{\rm st}$ defined in \eqref{set-Theta-i}.

\subsection{Example 3: Testing for the mean of Gaussian data with unknown variance} \label{ss:Ex3}

\subsubsection{Multi-hypothesis invariant sequential $t$-test}\label{sss:ttest}

The model discussed in Example 1 has focused on Gaussian data of known variability. A more common practical scenario is when the variability of data is unknown.
In this section, we discuss the simplest i.i.d.\ model, although the results can be extended for more general non-i.i.d.\ situations.

Let $\{X_n\}_{n\ge 1}$ be the sequence of i.i.d.\ normal $\cN(\mu, \sigma^2)$ random variables with unknown mean~$\mu$ and unknown variance~$\sigma^2$, where
the variance~$\sigma^2$ is a nuisance parameter. Let $\theta=\mu/\sigma$. 
We are interested in testing the hypotheses $\Hyp_i: \theta=\theta_i$, $i=0,1,\dots,N$, where $\theta_0,\theta_1,\dots,\theta_N$ are given distinct numbers. 
Lai~\cite{Lai-as81-SPRT}  considered this problem
for testing two hypotheses in the context of invariant tests relative to the unknown variance $\sigma^2$. 
Lai proved that the invariant sequential \mbox{$t$-test} ($t$-SPRT) is 
first-order asymptotically optimal among all tests invariant to $\sigma^2$. Lai's result can be easily extended to multiple hypotheses. Below we show 
that the proposed MSPRT is also asymptotically optimal to first order, minimizing all positive moments of the sample size for all hypotheses in class 
$\class_{\rm sim}(\alphab)$ among all tests invariant under scale changes.
 
 The hypothesis testing problem is invariant under the group of scale changes, i.e., under the transformation which transforms $X_1, X_2,\dots, X_n$ into 
  $cX_1, cX_2,\dots, cX_n$ for an arbitrary non-zero constant $c$. Under this group of transformations,
 the maximal invariant is $\Mi_n=(1, X_2/X_1,\dots, X_n/X_1)$. For $n \ge 1$, let $Y_n = X_n/X_1$,
\[
 \overline{Y}_n= \frac{1}{n}\sum_{t=1}^n Y_t, ~~ v_n^2 =\frac{1}{n} \sum_{t=1}^n Y_t^2, ~~  
 t_n = \frac{\overline{Y}_n}{v_n} = \frac{n^{-1}\sum_{t=1}^n X_t}{[n^{-1}\sum_{t=1}^n X_t^2]^{1/2}}.
\]
Straightforward calculation shows that the density of the maximal invariant under the hypothesis $\Hyp_i$ is
\begin{equation}\label{pMI}
p_i(\Mi_n) = \frac{1}{\sqrt{2\pi{(n-1)} n v_n^{2(n-1)}}} \int_0^{\infty} u^{-1} \exp\left\{n f(u, \theta_i t_n)\right\} \D u,
\end{equation}
where $f(u, z)  = -u^2/2 + z u + \log u$. Therefore, the invariant LLRs are given by
\[
\lambda_{ij}(n) = \log \brcs{\frac{\int_0^{\infty}u^{-1}\exp\set{n f(u, \theta_i t_n)}\D u}{\int_0^{\infty}u^{-1}\exp\set{n f(u, \theta_j t_n)}\D u}}, \quad i,j=0,1,\dots,N, ~ i \neq j.
\]
The invariant MSPRT is defined as in \eqref{Ti}-\eqref{D1} with these invariant LLRs.  Note that the statistic $t_n$ is the famous Student $t$-statistic which is the basis for 
Student's $t$-test in the fixed sample size setting. For this reason, the invariant MSPRT 
 based on $\lambda_{ij}(n, t_n)$ will be referred to as the~$t$-MSPRT. 

Define
\begin{equation*}
 J_n(z) =\int_0^{\infty}u^{-1}\exp\set{n f(u, z)}\,\D u,
\end{equation*}
so the LLRs for the maximal invariant are of the form 
\[
\lambda_{ij}(n)=  \log [J_n(\theta_it_n)/J_n(\theta_j t_n)] , ~~ i,j =0,1,\dots,N, ~ i\neq j.
\]
The invariant LLRs $\lambda_{ij}(n)$ are too complicated for direct use. However, it is possible to replace $\lambda_{ij}(n)$ with a suitable approximation, $\lambda_{ij}(n) \approx\tilde\lambda_{ij}(n)$. 
If $\Pb_i(\lvert \lambda_{ij}(n)-\tilde\lambda_{ij}(n)\rvert<C)=1$ for $n \ge n_0$ ($n_0 \ge 1$) with $C$ a
constant, then the $r$-complete convergence of $n^{-1}\tilde\lambda_{ij}(n)$ to~$I_{ij}$ under~$\Pb_i$ 
implies the $r$-complete convergence $n^{-1}\lambda_{ij}(n) \to I_{ij}$ under~$\Pb_i$.

Specifically, using the uniform version of the Laplace asymptotic integration method (cf.\ Wijsman~\cite{wijsman-AMS71}), it can be shown 
that  uniformly in~$t_n$
\[
 \abs{\lambda_{ij}(n)-n \, g_{ij}(t_n)-\Delta_{ij}(t_n)} \to 0 \quad \text{as}~ n\to\infty,
 \]
where the term~$\Delta_{ij}(t_n)$ is bounded by a finite positive constant $C_{ij}$ and 
\begin{align*}
g_{ij}(t_n)&=\phi(\theta_i t_n)-\phi(\theta_j t_n) - \tfrac{1}{2}~ (\theta_i^2-\theta_j^2),
\\
\phi(t_n) & = \tfrac{1}{4}~ t_n\brc{t_n+\sqrt{4+t_n^2}}+\log\brc{t_n+\sqrt{4+t_n^2}} .
\end{align*}
Consequently,
\begin{equation*}
\abs{n^{-1} \lambda_{ij}(n) -g_{ij}(t_n)} \le C_{ij}/n, \quad n \ge 1,
\end{equation*}

Since $\Eb_i\abs{X_1}^r<\infty$ for all $r \ge 1$, it follows that
\begin{equation*}
t_n \xrightarrow[n\to\infty]{\Pb_{i}-r-\text{completely}} \frac{\Eb_i [X_1]}{\sqrt{\Eb_i [X_1^2]}} = \frac{\theta_i}{\sqrt{1+\theta_i^2}} =Q_i  \quad \text{for all} ~ r \ge 1,
\end{equation*}
and therefore, the normalized LLR $n^{-1} \lambda_{ij}(n)$ converges $r$-completely to~$g_{ij} (Q_i)$ under $\Pb_i$, so that
the $r$-complete convergence conditions~\eqref{rcompleteLLR} hold for all $r\ge 1$ with $\psi(n)=n$ and $I_{ij} = g_{ij}\brc{Q_i}$.

It remains to verify that $I_{ij}>0$. To this end, note that for any fixed $|t| \le 1$, the maximum of the function $\tilde{\phi}(\theta, t)= \phi(\theta t) - \theta^2/2$ over~$\theta$ is attained at 
$\theta^*=t/(1-t^2)^{1/2}$, so that $\theta^*=\theta_i$ if $t=Q_i=\theta_i/(1+\theta_i)^{1/2}$. Hence,
\[
 g_{ij}\brc{Q_i} = \tilde{\phi}(\theta_i, Q_i) - \tilde{\phi}(\theta_j, Q_i)>0.
\]
By Theorem~\ref{Th:AOMSPRTsimple}, the $t$-MSPRT asymptotically minimizes all positive moments of the stopping time and, as $\alpha_{\max}\to0$,
\begin{equation*}
\inf_{\Dsf \in \class_{\rm sim}(\alphab)}\Eb_i[T^r]  \sim \brc{\max_{j\in \Ni} \frac{|\log\alpha_{ji}|}{g_{ij}(Q_i)}}^{r} \sim \Eb_i [T_*^r], ~~ i=0,1,\dots,N.
\end{equation*}

In the case of two hypotheses ($N=1$), the above results are identical to those obtained by Lai \cite{Lai-as81-SPRT} for the $t$-SPRT.

\subsubsection{Adaptive sequential test}\label{sss:atest}

We continue considering the same model as in Subsection~\ref{sss:ttest} but now in the context of the adaptive SPRT. So again 
$X_n\sim \cN(\mu, \sigma^2)$, $n=1,2, \ldots$ are i.i.d.\ normal random variables with unknown mean~$\mu$ and unknown variance~$\sigma^2$, but now we focus on the two
composite hypotheses $\Hyp_0: \mu \le \mu_0, \sigma^2>0$ and $\Hyp_1:\mu \ge \mu_1, \sigma^2>0$, where $\mu_1, \mu_0$ are given numbers, $\mu_1>\mu_0$, 
and $\sigma^2$ is an unknown nuisance parameter. 
If this model is treated in the context of invariant tests when the hypotheses are $\Hyp_i: \mu/\sigma=q_i$, $i=0,1$, where $q_0$ and~$q_1$ are given numbers,
then the results in the previous subsection show that the invariant $t$-SPRT is asymptotically optimal in the class of invariant tests.  However, for values of 
$q=\mu/\sigma$ different from $q_i$, this test is not optimal. It performs especially poorly in the indifference zone~$(q_0,q_1)$. To overcome this drawback 
Tartakovsky~{\it et~al.}~\cite{TNB_book2014} construct an invariant \mbox{$t$-$2$-SPRT}, which minimizes the expected sample size at the worst point $q^*\in (q_0,q_1)$. 
But this test is also not optimal for any other point and performs not great at the points located far from~$q^*$. On the other hand, the \mbox{AMSPRT} 
(which we refer to as the \mbox{A-$2$-SPRT} in the case of two hypotheses) is adaptive and asymptotically efficient at any point~$q \in (-\infty,\infty)$. 
Furthermore, it is also invariant to scale transformations.

Let $\theta=(\mu, \sigma^2)$ and $\tilde\theta=(\tilde\mu, \tilde\sigma^2)$. We now show that all conditions of Theorem~\ref{Th:AOAMSPRTcomposite} (with $N=1$) 
are satisfied when $\{\hat\theta_n\}$ is a sequence of MLEs, which implies uniform asymptotic optimality of  the A-$2$-SPRT with $a_{01}=a_0= \log (1/\alpha_0)$
and $a_{10}=a_1 =\log(1/\alpha_1)$ in class $\class(\alphab)=\class(\alpha_0,\alpha_1)$.

 The LLR  is given by
\begin{align*}
\lambda_{\theta,\tilde{\theta}}(n)&=\frac{n}{2} \log \left(\frac{\tilde{\sigma}^2}{\sigma^2}\right) +\frac{\sigma^2-\tilde{\sigma}^2}{2\tilde{\sigma}^2\sigma^2}\sum_{t=1}^nX_t^2 \\
& \quad +
\frac{\mu\tilde{\sigma}^2-\tilde{\mu}\sigma^2}{\tilde{\sigma}^2\sigma^2}\sum_{t=1}^n X_t -\frac{\mu^2\tilde{\sigma}^2-\tilde{\mu}^2\sigma^2}{2\tilde{\sigma}^2\sigma^2}n,
\end{align*}
and the Kullback--Leibler ``distance'' is
\begin{equation*}
I(\theta,\tilde{\theta})=\Eb_\theta[\lambda_{\theta, \tilde\theta}(1)]=\frac{1}{2}\set{\frac{(\mu-\tilde{\mu})^2+\sigma^2}{\tilde{\sigma}^2} +\log \frac{\tilde{\sigma}^2}{\sigma^2}-1}.
\end{equation*}
By the SLLN, 
\[
n^{-1} \lambda_{\theta,\tilde\theta}(n) \xra[n\to\infty]{\text{$\Pb_\theta$-a.s.}} I(\theta,\tilde{\theta}).
\]
Thus, condition $\Cb1$ holds.  It remains to verify positiveness of 
\[
I_1(\mu, \sigma^2)=\inf_{\substack{\tilde\mu\ge \mu_1,  \tilde\sigma^2>0}}I(\mu, \sigma^2; \tilde{\mu}, \tilde\sigma^2) ~~ \text{for} ~ \mu < \mu_1, \sigma^2>0 
\]
and 
\[
I_0(\mu, \sigma^2)=\inf_{\substack{\tilde\mu\le \mu_0,  \tilde\sigma^2>0}} I(\mu, \sigma^2; \tilde{\mu}, \tilde\sigma^2)   ~~ \text{for} ~ \mu > \mu_0, \sigma^2>0 
\]
and the left-tail condition $\Cb3$.

Let $q=\mu/\sigma$ and $q_i=\mu_i/\sigma$.  Let $\Qbb=(-\infty, + \infty)$ denote the $q$-parameter space and let 
$\Qbb_0=(-\infty, q_0]$, $\Qbb_1=[q_1,\infty)$, $\Qbb_{\rm in}=(q_0,q_1)$.  

The minimum value $\min_{\tilde{\sigma}>0}I(\theta,\tilde{\theta})=\frac{1}{2}\log \left[1+(\mu-\tilde{\mu})^2/\sigma^2\right]$
is achieved at the point $\tilde{\sigma}^2=\sigma^2+(\mu-\tilde{\mu})^2$ and $I_i(\mu, \sigma^2)$  are given by
\begin{equation}\label{Ji}
\begin{aligned}
I_1(\theta) & =\inf_{\substack{\tilde\mu\ge \mu_1, \\ \tilde\sigma^2>0}}I(\theta,\tilde{\theta})=I_1(q) =
\frac{1}{2}\log [1+(q_1-q)^2]  ~~ \text{for} ~ q<q_1 ,
 \\
I_0(\theta)&= \inf_{\substack{\tilde\mu\le \mu_0, \\ \tilde\sigma^2>0}} I(\theta,\tilde{\theta})= I_0(q)=\frac{1}{2}\log[1+ (q-q_0)^2] ~~ \text{for} ~ q > q_0.
\end{aligned}
\end{equation} 
Clearly, $I_0(q) >0$ for $q\in \Qbb_1 + \Qbb_{\rm in} =(q_0,\infty)$ and $I_1(q) >0$ for $q\in \Qbb_0 + \Qbb_{\rm in} = (-\infty, q_1)$, and hence, 
$\min[I_0(q), I_1(q)] > 0$ for $q\in \Qbb_{\rm in} =(q_0,q_1)$.  Therefore, the conditions related to the minimal Kullback--Leibler ``distances'' 
for the corresponding sets hold and it remains to deal with the left-tail $r$-complete convergence condition $\Cb3$. 

The unrestricted MLE 
\[
\hat\theta_n=(\hat{\mu}_n,\hat{\sigma}_n^2)=\arg\sup_{\substack{\mu\in (-\infty,\infty), \\ \sigma^2>0}}\lambda_{\theta; \tilde{\theta}}(n)
\]
is a combination of the sample mean and sample variance,  
\[
\hat\mu_{n} =\overline{X}_n = n^{-1}\sum_{t=1}^n X_t, \quad \hat\sigma_{n}^2=v_n^2=n^{-1}\sum_{t=1}^n (X_t-\overline{X}_n)^2.
\]
 Let 
 \[
 \hat\mu_{n,1}=\max\{\mu_1, \overline{X}_n\}, ~~ \hat\mu_{n,0}=\min\{\mu_0, \overline{X}_n\}, ~~  \hat\sigma_{n, i}^2=n^{-1}\sum_{t=1}^n (X_t-\hat\mu_{n,i})^2
 \]
 be the restricted MLEs of $\mu$ and $\sigma^2$ conditioned on the hypotheses $\Hyp_1$ and~$\Hyp_0$, respectively, 
\begin{align*}
\hat\mu_{n,1}& =\arg\sup_{\substack{\mu\ge \mu_1}}\lambda_{\theta; \tilde{\theta}}(n) , \quad \hat\mu_{n,0}=\arg\sup_{\substack{\mu\le \mu_0}}\lambda_{\theta; \tilde{\theta}}(n),
\\
 \hat\sigma_{n, 1}^2 & = \arg\sup_{\substack{\mu\ge \mu_1, \\ \sigma^2>0}}\lambda_{\theta; \tilde{\theta}}(n), \quad  
 \hat\sigma_{n, 0}^2  = \arg\sup_{\substack{\mu\le \mu_0, \\ \sigma^2>0}}\lambda_{\theta; \tilde{\theta}}(n) .
\end{align*}

Straightforward calculation shows that the decision statistics in the \mbox{A-2-SPRT} are  $\hat\lambda_i^*(n) = \ell(n) - \ell_i(n)$ ($i=0,1$), where
\begin{equation}\label{LLRlambdas}
\begin{aligned}
\ell(n) & = \frac{1}{2} \sum_{t=1}^n \brcs{\log \left(\frac{1}{v_{t-1}^2}\right) + \frac{1}{v_{t-1}^2}\brc{ 2 \overline{X}_{t-1} X_t
-X_t^2  - \overline{X}_{t-1}^2}},
\\
\ell_i(n) &=\frac{n}{2}  \brcs{\log \left(\frac{1}{\hat\sigma_{n, i}^2}\right)  -1}, ~~ i =0,1.
\end{aligned}
\end{equation}
These statistics allow for an efficient recursive computation.  Note that $\ell(n)$ requires an initial condition for the estimate
$\hat\theta_0$. This condition is the design parameter that can be deterministic or random. In particular, 
we could set $\ell(0)=0$.

Since $X_1,X_2,\dots$ are i.i.d.\ and $\Eb_\theta[|X_1|^r]<\infty$ for all $r \ge 1$, it can be shown that the following $r$-complete convergence conditions hold as 
$n\to\infty$ under~$\Pb_\theta$: 
\begin{align*}
\oX_n & \to\mu, \quad \oX^2_n\to\mu^2, \quad v_n^2\to\sigma^2 \quad \text{for all} ~  \mu \in(-\infty, +\infty), ~ \sigma^2>0; \\
\hat\mu_{n,1} & \to \begin{cases}
\mu  & \text{if}~ \mu\ge\mu_1 \\
\mu_1  & \text{if}~ \mu <\mu_1
\end{cases} , \quad
\hat\mu_{n,0}  \to \begin{cases}
\mu & \text{if}~ \mu \le \mu_0 \\
\mu_0 & \text{if}~ \mu >\mu_0
\end{cases} ,
\\
\hat\sigma_{n,1}^2 & \to \begin{cases}
\sigma^2 & \text{if}~ \mu\ge\mu_1 \\
\sigma^2 + (\mu-\mu_1)^2 & \text{if}~ \mu <\mu_1
\end{cases} , \quad
\hat\sigma^2_{n,0}  \to \begin{cases}
\sigma^2 & \text{if}~ \mu \le \mu_0 \\
\sigma^2 + (\mu-\mu_0)^2 & \text{if}~ \mu >\mu_0
\end{cases} .
\end{align*}
Using these relations along with~\eqref{LLRlambdas}, it can be verified that $r$-completely under $\Pb_\theta$ as $n\to\infty$ 
\begin{align*}
 n^{-1}\ell(n) & \to \frac{1}{2} \brc{\log \frac{1}{\sigma^{2}}-1} \quad \text{for all} ~ \mu \in(-\infty, +\infty), ~ \sigma^2>0;\\
n^{-1}\ell_{1}(n) & \to
\begin{cases}
\frac{1}{2}(\log \frac{1}{\sigma^2} -1) & \text{if}~ \mu\ge \mu_1,  ~ \sigma^2>0 \\
\frac{1}{2}(\log \frac{1}{\sigma^2+ (\mu_1-\mu)^2}-1)& \text{if}~ \mu < \mu_1, ~ \sigma^2>0
\end{cases} ,
\\
n^{-1}\ell_{0}(n) & \to
\begin{cases}
\frac{1}{2}(\log \frac{1}{\sigma^{2}} -1) & \text{if}~ \mu\le \mu_0,  ~ \sigma^2>0 \\
\frac{1}{2} (\log \frac{1}{\sigma^2 + (\mu-\mu_0)^2} -1) & \text{if}~ \mu > \mu_0, ~ \sigma^2>0
\end{cases}.
\end{align*} 
Combining these formulas yields (for all $r\ge 1$)
\begin{align*}
n^{-1}\hla_i^*(n)\xra[n\to\infty]{\text{$\Pb_\theta$-$r$-completely}} I_i(\theta)=I_i(q) ~~ \text{for}~ q \in \Qbb \setminus \Qbb_i,~ i=0,1,
\end{align*}
where $I_i(q)$ are given by \eqref{Ji}.

Therefore, condition $\Cb3$ is satisfied with $I_i(\theta)=I_i(q)$. By Theorem~\ref{Th:AOAMSPRTcomposite}, the \mbox{A-2-SPRT} is 
asymptotically optimal, minimizing all positive moments of the sample size in the class of tests $\class(\alpha_0,\alpha_1)$: 
for all $r \ge1$ as $\alpha_{\max}\to 0$ 
\begin{equation*}
\inf_{\Dsf\in \class(\alpha_0,\alpha_1)} \Eb_\theta [T^r] \sim \Eb_\theta [\hT^r] \sim \begin{cases}
(2|\log \alpha_1|/\log [1+(q_1-q)^2])^r & \text{if}~ q \le q^*
\\
(2|\log \alpha_0|/\log [1+(q-q_0)^2])^r& \text{if}~ q \ge q^*,
\end{cases}
\end{equation*}
where $q^*$ is the solution of the equation
\[
\brcs{1+(q_1-q)^2}^c = 1+ (q-q_0)^2
\]
($c\sim(\log\alpha_0)/(\log \alpha_1)$ as $\alpha_{\max} \to 0$). In particular, $q^*=(q_0+q_1)/2$ if $c=1$.

It is also worth noting that the mixture test M-2-SPRT with the mixing improper prior density $\pi(\mu, \sigma) = \sigma^{-1} \D \sigma \D\mu$ is uniformly asymptotically optimal.
To see this, consider for simplicity testing $\mu=q_0=0$ against $\mu \neq 0$ without the indifference zone. Introduce the probability measure
\[
\Pb^\pi= \int_{-\infty}^\infty \int_0^\infty \sigma^{-1} \Pb_{\mu,\sigma} \D \sigma \D\mu.
\]
Then the LR $\Lambda_n^\pi$ of $(X_1,\dots,X_n)$ under $\Pb^\pi$ relative to $\Pb_{0,1}$ is the same as the LR of the maximal invariant $\Mi_n$ under 
$\bar{\Pb}=(2\pi)^{-1/2}\int_{-\infty}^\infty \Pb_q \D q$, i.e.,
\[
\Lambda_n^\pi = \frac{\D \Pb^\pi(\Xb^n_1)}{\D\Pb_{0,1}(\Xb^n_1)} = \frac{1}{\sqrt{2\pi}} \int_{-\infty}^\infty \frac{p_q(\Mi_n)}{p_0(\Mi_n)} \D q,
\]
where $p_q(\Mi_n)$ is as in \eqref{pMI} with $\theta_i=q$. Direct calculation shows that 
\[
\lambda^\pi_n =\log \Lambda^\pi_n = \frac{n}{2} \log \brc{1+ \frac{\overline{X}_n^2}{v_n^2}} - \frac{1}{2} \log n
\]
See \cite{Siegmund_book1985}, page 117.
Clearly,
\[ 
n^{-1}\lambda^\pi_n\xra[n\to\infty]{\text{$\Pb_\theta$-$r$-completely}} I(\theta)=I(q)= \frac{1}{2}\log(1+q^2).
\]
Since $I(q)\equiv I_1(q)\equiv I_0(q)$ when $q_0=q_1=0$ (see \eqref{Ji}), it follows that asymptotic performance of the \mbox{A-2-SPRT} and the \mbox{M-2-SPRT} are the same.

\section{Concluding remarks}

1.  Analogous near-optimality results can be obtained for the multi-hypothesis generalized likelihood ratio SPRT.

2. In the non-i.i.d. cases when observations are severely non-stationary and dependent, computing the LLRs is typically time-consuming since there are no recursive formulas
as in the i.i.d.\ case. Computing the mixtures is even more time-consuming. One way to overcome this difficulty is to use window-limited versions 
when computing the corresponding statistics in a fixed moving time window, following the idea proposed by Lai~\cite{LaiIEEE98} for 
changepoint detection problems.  To ensure the tests exhibit asymptotic optimality properties, the size of the window, denoted as $\tau=\tau(\alpha_{\max})$, must be a 
function of the specified error rate $\alpha_{\max}$. Moreover, it should approach infinity approximately as the maximal value of the optimum expected sample size 
\[
\tau(\alpha_{\max})\sim \max\set{\max_{i\in \Nc_0} \sup_{\theta\in \Theta_i} F_{i,\theta}(\alphab), \min_{i\in \Nc_0} \sup_{\theta\in \Theta_{\rm in}} F_{i,\theta}(\alphab)}.
\]

3. The results of uniform optimality in Theorems~\ref{Th:AOMMSPRTcomposite} and \ref{Th:AOAMSPRTcomposite} can be extended to establish  the 
first-order minimax asymptotic optimality of the \mbox{MMSPRT} $\Dsf_*=(T_*, d_*)$ and the \mbox{AMSPRT} $\widehat\Dsf=(\hT, \hat{d})$. 
That is, both tests solve the asymptotic
version of the Kiefer--Weiss problem \cite{Kiefer&Weiss-AMS1957} of minimizing the expected sample size in the worst scenario with respect to the parameter $\theta$, 
or more generally, minimizing higher moments of the stopping time in the worst case. Specifically, let
\[
 \tilde{I}(\theta)= \begin{cases}
\min\limits_{j\in \Nc_0\setminus i} I_j(\theta) & \text{if}~\theta\in\Theta_i
\\
\max\limits_{0 \le i \le N} \min\limits_{j\in \Nc_0\setminus i} I_j(\theta) & \text{if}~\theta\in\Iin
\end{cases}.
\]
It can be shown that if instead of conditions \eqref{Ipositive} we will require a stronger separability condition $\inf_{\theta\in \Theta} \tilde{I}(\theta) >0$
then as $\alpha_{\max}\to 0$
\[
\inf_{\Dsf\in \class(\alphab)} \sup_{\theta\in\Theta} \Eb_\theta [T^r] \sim   \brcs{\max\set{\max_{i\in\Nc_0}\sup_{\theta \in \Theta_i} F_{i,\theta}(\alphab), 
\sup_{\theta\in \Theta_{\rm in}}\min_{i \in \Nc_0} F_{i,\theta}(\alphab)}}^r ,
\]
and the right-hand side is attained for $\sup_{\theta\in\Theta} \Eb_\theta [T_*^r]$ and $\sup_{\theta\in\Theta} \Eb_\theta [\hT^r]$. In the i.i.d.\ case, this problem
has been addressed by Lai~\cite{Lai-AS73}, Lorden~\cite{lorden-as76},  Huffman~\cite{Huffman-AS83}, Pavlov~\cite{Pavlov2} among others and in the non-i.i.d.\ case
by Tartakovsky~{\em et al.}~\cite{TNB_book2014}.

4.  As stated in previous sections, the almost sure convergence of the normalized LLR $\lambda_{\theta,\vartheta}(n)/\psi(n) \to I(\theta,\vartheta)$ as $n\to\infty$ 
under $\Pb_\theta$ is not sufficient for the optimality of proposed sequential tests in the sense of minimizing the expected sample size or moments of the sample size. 
However, the following  weak (in the almost sure sense)  asymptotic optimality holds under this almost sure convergence condition for the \mbox{MMSPRT} $(T_*,d_*)$ and the \mbox{AMSPRT} $(\hT, \hat{d})$:
for all $\theta\in \Theta_i$ and $i =0,1,\dots,N$ 
\[
\lim_{\alpha_{\max} \to 0} \sup_{T\in \class(\alphab)} \Pb_\theta\set{\tau  \ge (1+\varepsilon) T} =0 ~~ \text{for every} ~  \varepsilon>0
\] 
and
\[
\frac{\tau}{F_{i,\theta}(\alphab)} \xra[\alpha_{\max} \to 0]{\Pb_\theta-\text{a.s.}} 1,
\]
where $\tau=T_*$ or $\tau=\hT$. Lai~\cite{Lai-as81-SPRT} established this result for the SPRT in the problem of testing two simple hypotheses and an asymptotically stationary case 
($\psi(n)=n$).

\section*{Acknowledgements}

This research draws inspiration from Tze Lai's seminal paper~\cite{Lai-as81-SPRT}, which explored the asymptotic optimality of the SPRT for general non-i.i.d. models. 
The findings presented in Section~\ref{s:Rulessimple}, regarding the first-order asymptotic optimality of the multi-hypothesis (matrix) SPRT, directly extend Lai's contributions. 
I am deeply grateful to Tze Lai for the insightful conversations we shared between 1993 and 2023, which significantly contributed to this work. 


%
%

\end{document}